%% file: main.tex
\documentclass[a4paper]{amsart}
\input{abbr}
\usepackage[utf8]{inputenc}
\usepackage{amsmath,amssymb,amsthm}
\usepackage{mathrsfs}
\usepackage[british]{babel}
\usepackage{chemarrow}
\usepackage{eucal}
\usepackage{graphicx}
\usepackage{color}
\usepackage{url}
\usepackage{esint}
\usepackage{tikz}
\usepackage{mathtools}
\usepackage{stmaryrd}
\usetikzlibrary{arrows,shapes}
\usetikzlibrary{fit}
\usetikzlibrary{calc,3d}
\usepackage{lipsum}  
\usepackage{tikz-3dplot}
\usepackage{subcaption}
\usepackage{makecell} 
 \usepackage{booktabs}
 \usepackage{array}
\usepackage{caption}
\usepackage[sort,compress]{natbib}
\usepackage{siunitx}
\usepackage[all,cmtip]{xy}
\usetikzlibrary{patterns, arrows.meta, positioning, decorations.markings}
\tikzset{
    >=stealth',
    punkt/.style={
           rectangle,
           rounded corners,
           draw=black, very thick,
           text width=6.5em,
           minimum height=2em,
           text centered},
    pil/.style={
           ->,
           thick,
           shorten <=2pt,
           shorten >=2pt,}
}

\newcommand{\rot}{\operatorname{rot}}

 \newtheorem{theorem}{Theorem}
 \newtheorem{lemma}[theorem]{Lemma}

 \theoremstyle{definition}

 \theoremstyle{remark}
 \newtheorem{remark}[theorem]{Remark}
 
\numberwithin{equation}{section}
\numberwithin{theorem}{section}

\begin{document}
\title[{Nonconforming Curved Elements for Stokes Equation}]{
Pressure-Robust Fortin--Soulie Elements of the Stokes Equation on Curved Domains
}

\author{Wei Chen}
\address{LMAM and School of Mathematical Sciences, Peking University, Beijing 100871, P. R. China.}
\address{Chongqing Research Institute of Big Data, Peking University, Chongqing 401329, P. R. China}
\email{2406397052@pku.edu.cn}

\author{Zhen Liu*}
\address{Institut für Mathematik, Friedrich-Schiller-Universität Jena, 07743 Jena, Germany}
\address{Chongqing Research Institute of Big Data, Peking University, Chongqing 401329, P. R. China}
\email{z.liu@uni-jena.de}

\thanks{* Corresponding author.}

\thanks{\textbf{Funding:} The first author acknowledges support by the Postdoctoral Science Foundation of China Grants No. 2025M783065. The second author acknowledges support by the Sino-German (CSC-DAAD) Postdoc Scholarship Program, ID 202406010516.}

\date{}

\begin{abstract}
This paper presents a pressure-robust and element-wise divergence-free nonconforming finite element method for the Stokes problem on curved domains. 
The discrete element is constructed by mapping the Fortin--Soulie element from a reference triangle using an isoparametric mapping for the geometry and a Piola transform for the function space. The inf-sup condition and the error estimate with optimal convergence rate are proved. Pressure-robustness is obtained by replacing the discrete velocity test functions with the first-order Raviart--Thomas functions. Numerical examples are provided to validate the theoretical results.
\end{abstract}

\keywords{nonconforming finite element; divergence-free; pressure-robust; curved domain; Fortin--Soulie element}
 
\subjclass{%
65N30,  
65N12,  
76M10
}

\maketitle


\section{Introduction}

Let $\Omega\subset \mathbb{R}^2$ be a bounded domain with a possibly curved boundary $\partial \Omega$. This paper considers the Stokes equation
\begin{equation}
\label{eq::Stokes}
\begin{aligned}
-\nu \Delta \bu + \nabla p & = \bft && \text{ in } \Omega, & \\
\Div \bu & = 0 && \text{ in } \Omega,  &\\
\bu&=0 &&\text{ on } \partial\Omega, &
\end{aligned}    
\end{equation}
where $\bft$ denotes the body force, $\nu> 0$ the kinematic
viscosity, $\bu$ the fluid velocity, and $p$ the pressure. 
In numerical simulations of the Stokes equation~\eqref{eq::Stokes}, it is desirable for discrete schemes to be divergence-free $\Div\bu_h=0$, where $\bu_h$ is the discrete velocity.
On polygonal domains, this property ensures pressure-robustness by decoupling the velocity error from the pressure, which is advantageous for flows with large pressure gradients or small viscosity.
Since the construction of the divergence-free conforming finite elements is not trivial, numerous techniques have been proposed, which refer to~\cite{scott1985norm,guzman2018inf,fabien2022low,zhang2005a,christiansen2018generalized,beirao2017divergence,guzman2014conforming2D} and the references therein.
Alternatively, nonconforming elements~\cite{coruzeix1873conforming,fortin1983non,fortin1985a,crouzeix1989nonconforming,baran2007gauss} can achieve element-wise divergence-free ($\Div_h\bu_h=0$), while they typically require velocity reconstruction operators to recover pressure-robustness~\cite{linke2012a,linke2014on,linke2016robust,apel2022a}.
Despite their effectiveness on polygonal domains, these methods typically suffer from a drop to second-order accuracy on curved domains due to boundary approximation errors.
To recover higher-order accuracy, isoparametric finite element methods~\cite{lenoir1986optimal,ciarlet1972interpolation, bernardi1989optimal} are commonly employed. Nevertheless, a direct application of standard isoparametric techniques fails to preserve both the divergence-free and pressure-robust properties~\cite{bae2004finite,dione2015penalty}.

Recently, Neilan and Otus~\cite{neilan2021divergence} developed the first exactly divergence-free method on curved domains.
Their construction treats the lowest-order Scott--Vogelius pair as finite element spaces defined on macro-elements to ensure stability and utilizes the Piola transform to maintain the divergence-free property. Pressure-robustness is obtained by constructing a special approximation of the source term through commuting projections. 
This approach has been further extended to arbitrary polynomial orders~\cite{durst2024a} and $H^1$-conforming approximations~\cite{neilan2023a}. A boundary correction method based on the Clough--Tocher refinements has also been proposed~\cite{liu2023a}.

In \cite{fortin1983non}, Fortin and Soulie developed a stable, element-wise divergence-free element by enriching the quadratic Lagrange element with some nonconforming bubble functions. 
This paper is devoted to extending such elements to curved domains, ensuring element-wise divergence-free velocity, stability, optimal convergence, and pressure-robustness.
Following \cite{neilan2021divergence}, the Piola transform is employed to obtain the element-wise divergence-free velocity. Note that the degrees of freedom for the Fortin--Soulie element, including its extension to curved domains, cannot be directly defined in the sense of Ciarlet \cite[Definition 3.1.1]{brenner2008the}. 
Consequently, the core of the construction lies in the definition of degrees of freedom for the mapped quadratic Lagrange space $\bWh$ and the mapped nonconforming bubble space $\bPhi_h$, respectively.
The global velocity space is defined as $\bVh=\bWh+\bPhi_h$  
and is paired with the pressure space $Q_h$ to solve the Stokes equation~\eqref{eq::Stokes}.
The inf-sup condition follows by a two-step method~\cite{stenberg1984analysis,hu2015finite,hu2015a}: the piecewise constant component of $Q_h$ is controlled by $\mathbf{W}_h$, while its orthogonal component is controlled by $\mathbf{\Phi}_h$.
However, as a result of the construction, $\bWh$ is continuous only at the endpoints and midpoint, and $\bPhi_h$ is continuous solely at the two Gauss--Legendre points for the internal edge shared by elements with curved edges. Consequently, the velocity space $\bVh$ lacks continuity at any point across such edges. This nonconformity introduces two issues: the estimation of consistency errors and the preservation of pressure-robustness.

To facilitate the consistency error analysis, an operator $\bE_h: \bVh \to \bVh^{iso}$ is defined via separate interpolations of the component spaces $\bWh$ and $\bPhi_h$ into their isoparametric counterparts. Here, $\bVh^{iso}$ is the standard second-order nonconforming isoparametric Fortin--Soulie space. 
The proof of the approximation properties for $\bE_h$ relies on a stable decomposition of $\bVh$ presented herein. 
By leveraging these approximation properties in conjunction with the weak continuity property of $\bVh^{iso}$, the optimal velocity error estimate is established:
\begin{equation*}
\begin{aligned}
\| \nabla \bu- \nabla_h \bu_h \|_{L^2(\Omega_h)}\leq& Ch^2 \big(\| \bu \|_{H^3(\Omega)} + \nu^{-1}\|p\|_{H^2(\Omega)} + \nu^{-1}h\|\bft\|_{H^3(\Omega)}\big).
\end{aligned}    
\end{equation*}
The presence of $\nu^{-1}$ indicates a persistent coupling between the velocity error and the pressure scale, implying that a direct application of the pair $(\bVh, Q_h)$ fails to be pressure-robust.
To address this, a modified scheme is proposed by replacing the standard Galerkin source term $\int_{\Omega_h}\bft_h\cdot\bv_h\dx$ with the variational crime $\int_{\Omega_h}\bft_h\cdot\Pi_h\bv_h\dx$, where $\Pi_h$ denotes the velocity reconstruction operator from $\bVh$ to the first-order parmetric Raviart--Thomas spaces~\cite{bertrand2014first}.
While such reconstruction techniques have been constructed for nonconforming elements on polygon domains~\cite{linke2012a,linke2014on,linke2016robust}, the current modification extends this framework to curved domains.
This modification, combined with the computable approximation $\bft_h$ from~\cite[Section~6]{neilan2021divergence}, leads a pressure-independent velocity error estimate:
$$
\|\nabla\bu-\nabla_h\bu_h^R\|_{L^2(\Omega_h)}\leq Ch^2\|\bu\|_{H^5(\Omega)},
$$
where $\bu_h^R$ is the discrete velocity solution of the modified scheme.
To the best of the authors' knowledge, this work represents the first extension of the reconstruction-based framework to curved domains.

The direct generalization of this method to three dimensions is, however, non-trivial. A primary obstacle is the absence of an analogous quadrature rule in higher dimensions~\cite{outs2004divergence}. The integration errors prevent the method from reaching its theoretical optimal convergence. 
In \cite{li2025divergence}, Li et al. developed an exactly divergence-free method for three-dimensional curved domains by employing high-order parametric Brezzi--Douglas--Marini elements and interior-penalty discontinuous Galerkin techniques.
Although the present study is confined to the two-dimensional case, these developments provide key insights into the potential generalization of the three-dimensional nonconforming Fortin element \cite{fortin1985a} to curved domains.

The rest of the paper is organized as follows:  Section~\ref{sec::preliminaries} introduces notations and preliminary results required for the subsequent analysis. In section~\ref{set::curved_fortin_soulie_element}, the nonconforming curved Fortin--Soulie element is constructed, followed by the verification of the discrete inf-sup stability. Section~\ref{sec::error_estimate} carries out a detailed error analysis, while the development of a pressure-robust formulation is detailed in section~\ref{sec::pressure_robust_scheme}. Section~\ref{sec::numerical_examples} provides numerical experiments that confirm the theoretical results. Some auxiliary results are given in Appendix~\ref{sec::appendix}.


\section{Preliminaries}
\label{sec::preliminaries}

This section introduces the notations and the weak formulation. Some results of the transformation of curved domains and the affine Fortin--Soulie element are provided for constructing and analyzing the curved Fortin--Soulie element.

\subsection{Notations}
\label{sec:notation}
Given a bounded domain $G$ with the boundary $\partial G$, the usual Sobolev spaces $W^{m,p}(G)$ with norm $\| \bullet\|_{W^{m,p}(G)}$ and semi-norm $| \bullet |_{W^{m,p}(G)}$ are used~\cite{adams2003sobolev}. Let $H^m(G) = W^{m,2}(G)$ and $L^p(G) = W^{0,p}(G)$. Furthermore, $H_0^1(G)$ is the subspace of $H^1(G)$ consisting of functions with vanishing traces on $\partial G$, and $L^2_0(G)$ denotes the space of $L^2(G)$ functions with zero mean over $G$. Denote the space of $m$-times continuously differentiable functions on $G$ by $C^m(G)$ and the space of all polynomials on $G$ with degree less than or equal to $k$ by $P_k(G)$. Corresponding vector-valued functions and spaces are denoted by boldface letters, e.g., $\bv \in \bP_k(G)$. Let $\bH(\Div;G) \coloneqq \{\bv\in\bL^2(G):~\Div \bv \in L^2(G)\}$.

Let $\hT$ be the reference triangle with vertices $(0,0)^\intercal,(1,0)^\intercal$, and $(0,1)^\intercal$. Denote by  $\hNT\coloneqq\{\hat{a}_i\}_{i=1}^6$ the set of quadratic Lagrange nodes and by $\hGT\coloneqq\{\hat{g}_i\}_{i=1}^6$ the set of
Gauss--Legendre points~\cite{fortin1983non} as shown in Figure~\ref{fig:points_hT}. Define
\begin{equation}
\label{def:hbPhi}
\hbPhi(\hT) \coloneqq \operatorname{span}\left\{
 (\phi_{\hT}(\hat{x}),0)^\intercal, \, 
 (0, \phi_{\hT}(\hat{x}) )^\intercal \right\},
\end{equation}
where $\phi_{\hT}(\hat{x})=2-3[(1-\hat{x}_{1}-\hat{x}_2)^2+\hat{x}_1^2+\hat{x}_2^2]$ with $\hat{x}=(\hat{x}_1,\hat{x}_2)^\intercal$. Since $\phi_{\hT}(\hat{x})$ vanishes at all points in $\hGT$, $\bPhi(\hT)$ is a nonconforming quadratic Gauss--Legendre bubble function space in $\bP_2(\hT)$.

Throughout this paper, the constant $C$ will denote a generic positive constant which is independent of the mesh size. For a regular mapping $H:~\mathbb{R}^2 \rightarrow \mathbb{R}^2$, denote its Jacobian by $DH$.

\begin{figure}[h!]
    \centering
    \tikzset{
        axis/.style={->, >=latex, gray, thin},
        tri/.style={thick, black},
        node_a/.style={circle, fill=black, inner sep=0pt, minimum size=4pt},
        node_g/.style={circle, draw=black, line width=0.8pt, fill=white, inner sep=0pt, minimum size=4pt}
    }
    \begin{subfigure}[b]{0.48\textwidth}
        \centering
        \begin{tikzpicture}[scale=2.5]
            \draw[tri] (0,0) -- (1,0) -- (0,1) -- cycle;
            \coordinate (a1) at (0,0);
            \coordinate (a2) at (1,0);
            \coordinate (a3) at (0,1);
            \coordinate (a4) at (0.5,0);
            \coordinate (a5) at (0.5,0.5);
            \coordinate (a6) at (0,0.5);
            \foreach \i in {1,...,6} {
                \node[node_a] at (a\i) {};
            }
            \node[below left]  at (a1) {$\hat{a}_1$};
            \node[below right] at (a2) {$\hat{a}_2$};
            \node[above left]  at (a3) {$\hat{a}_3$};
            \node[below]       at (a4) {$\hat{a}_6$};
            \node[above right] at (a5) {$\hat{a}_4$};
            \node[left]        at (a6) {$\hat{a}_5$};
        \end{tikzpicture}
        \caption{Quadratic Lagrange nodes.}
        \label{fig:Lagrange_points}
    \end{subfigure}
    \hfill
    \begin{subfigure}[b]{0.48\textwidth}
        \centering
        \begin{tikzpicture}[scale=2.5]
            \draw[tri] (0,0) -- (1,0) -- (0,1) -- cycle;
            \def\gL{0.2113} 
            \def\gH{0.7887} 
            \coordinate (g1) at (\gL, 0);
            \coordinate (g2) at (\gH, 0);
            \coordinate (g3) at ({1-\gL}, \gL); 
            \coordinate (g4) at ({1-\gH}, \gH); 
            \coordinate (g5) at (0, \gH);
            \coordinate (g6) at (0, \gL);
            \foreach \i in {1,...,6} {
                \node[node_g] at (g\i) {};
            }
            \node[below]       at (g1) {$\hat{g}_5$};
            \node[below]       at (g2) {$\hat{g}_6$};
            \node[above right]       at (g3) {$\hat{g}_1$};
            \node[above right] at (g4) {$\hat{g}_2$};
            \node[left]        at (g5) {$\hat{g}_3$};
            \node[left]        at (g6) {$\hat{g}_4$};
        \end{tikzpicture}
        \caption{Gauss--Legendre points.}
        \label{fig:GaussLegendre_points}
    \end{subfigure}
    \caption{The quadratic Lagrange nodes and Gauss--Legendre points on the reference triangle $\hT$.}
    \label{fig:points_hT}
\end{figure}
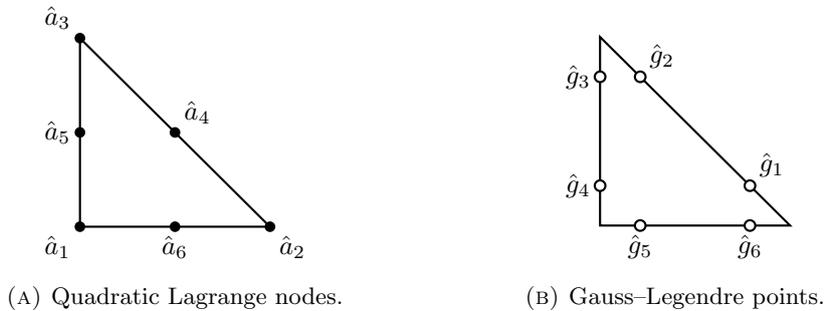

\subsection{Weak formulation}

Let $\bV =\bH_0^1(\Omega)$ and $Q = L_0^2(\Omega)$. The weak formulation of the Stokes equation \eqref{eq::Stokes} reads: Given $\bft \in \boldsymbol{L}^2(\Omega)$, find $(\bu,p) \in \bV \times Q$ such that
\begin{equation}
\label{var: continuous}
\begin{aligned}
\nu \int_{\Omega} \nabla \bu : \nabla \bv \dx -\int_\Omega p \Div \bv \dx & = \int_\Omega \bft \cdot  \bv  \dx  &&\text{ for all } \bv \in \bV, &\\
-\int_\Omega q \Div \bu  \dx &= 0 && \text{ for all } q \in Q.&
\end{aligned}
\end{equation}
Problem \eqref{var: continuous} is well-posed due to the inf–sup condition holding, see~\cite{girault2012finite}. Introduce the space of the divergence-free functions
$ \bZ \coloneqq \{ \bv \in \bV:~ \Div \bv =0 \text{ on } \Omega \}.$
The velocity $\bu$ can be equivalently characterized as: Find $\bu \in \bZ$ such that
\begin{equation}
\label{eqn:u}
\nu \int_{\Omega} \nabla \bu : \nabla \bv \dx  = \int_\Omega \bft \cdot  \bv  \dx \quad  \text{ for all } \bv \in \bZ.    
\end{equation}
Let $\mathrm{I}$ be the $2\times 2$ identity matrix. By introducing the “stress” tensor $\bsig = \nu \nabla \bu - p \mathrm{I}$ as the constitutive law, the equilibrium equation is governed by $\Div \bsig + \bft = 0$.
If $\bft = \nabla \psi$ with $\psi \in H^1(\Omega)$, setting $\bv = \bu \in \bZ$ in \eqref{eqn:u} gives
\begin{equation*}
\nu \int_{\Omega} \nabla \bu:~\nabla \bu \dx =  \int_{\Omega} \nabla \psi \cdot \bu \dx = - \int_{\Omega} \psi \Div \bu \dx = 0,
\end{equation*}
which implies $\bu = \mathbf{0}$. 
This means that an irrotational volume force is completely absorbed by the pressure.

\subsection{Curved triangulations}
\label{Sec::DefFT} 

In this paper, let $\Omega \subset \mathbb{R}^2$ be a sufficiently smooth domain with the curved boundary $\partial\Omega$. Following the standard isoparametric framework~\cite{lenoir1986optimal,ciarlet1972interpolation, bernardi1989optimal}, a curved triangulation of $\Omega$ is constructed from a shape-regular affine triangulation $\tcalTh$ whose boundary vertices lie on $\partial \Omega$. The associated polygonal domain $\tOmega_h\coloneqq{\rm int} \left(\cup_{\tT \in \tcalTh} \overline{\tT}\right)$ provides an $\mathcal{O}(h^2)$ approximation to $\Omega$, where $h$ is the maximum diameter of the triangle $\tT \in \tcalTh$. Furthermore, it is assumed that each $\tT\in\tcalTh$ has at most two boundary vertices.

Let $G:~\tOmega_h \to \Omega$ be a bijective map such that $G|_{\tT}(\bx) = x$ at all vertices of $\tT$ and $\|G\|_{W^{1,\infty}(\tOmega_h)}\le C$. Note that $G$ is the identity map for any triangle $\tT \in \tcalTh$ with three interior vertices.  Let $G_h$ be the piecewise quadratic nodal interpolant of $G$, which satisfies $\|DG_h\|_{W^{1,\infty}(\tT)}\le C$ and $\|DG_h^{-1}\|_{W^{1,\infty}(\tT)}\le C$ for all $\tT\in \tcalTh$. Define the isoparametric triangulation as $\calTh = \{G_h(\tT):~ \tT\in \tcalTh\}$ and the computational domain are defined as $\Omega_h\coloneqq{\rm int}\Big(\cup_{ T\in  \calTh} \overline{ T}\Big)$. 

Let the gradient operator $\nabla_h$ and the divergence operator $\Div_h$ be understood piecewise with respect to the triangulation $\calTh$.
Let $\mathcal{E}_h$ denote the set of all edges of $\calTh$.
Let $\bn$ be the outward unit normal of a domain, which is clear from its context. 
Notations marked with a tilde $\tilde{\bullet}$ refer to quantities associated with the triangulation $\tcalTh$.

For $\tT \in \tcalTh$, let $F_{\tT}:~\hT\to \tT$ be an affine mapping. Define the quadratic diffeomorphism $F_T:~\hT\to T$
as $F_T = G_h\circ F_{\tT}$ such that $F_T = F_{\tT}$ at the vertices of $\hT$. Note that, if $e\subset \partial T$ is a straight edge with $e = F_{T}(\hat e)$ and $\hat e\subset \partial \hT$,
then $F_T|_{\hat e}$ is affine.  If $T\in \calTh$ has all straight edges, then $F_T$ is affine and $T = G_h(\tT) = \tT$.
Moreover, given $T \in \tcalTh$ with $h_T = \text{diam}(G_h^{-1}(T))$, it holds that~\cite{neilan2021divergence}
\begin{subequations}
\label{Prop-FT}
\begin{align}
& |F_T|_{W^{m,\infty}(\hat{T})} \le C h_T^m \quad 0\le m\le 2, \quad |F_T^{-1}|_{W^{m,\infty}(T)} \le C h_T^{-m} \quad 0\le m\le 3, \label{Prop-FT:eq1}\\
& C h_T^2 \le \det(D F_T) \text{ and } \det(D F_T) \le C h_T^2. \label{Prop-FT:eq2}
\end{align}
\end{subequations}
Recall that $\hNT\coloneqq\{\hat{a}_i\}_{i=1}^6$ is the set of Lagrange nodes in $\hT$. Let $\tNT =F_{\tT}(\hNT)$ and $\NT = F_T(\hNT)$ be the sets of Lagrange nodes on $\tT$ and $T$, respectively. The symbols $\tGT$ and $\GT$ are defined in a similar way but related to the Gauss--Legendre points on $\tT$ and $T$, respectively. 

For each $T\in \calTh$, define the contravariant Piola
mapping for the transformation of vector fields $A_T:~\hT\to \mathbb{R}^{2\times 2}$, see, e.g.,~\cite[Sect. 2.1.3]{boffi2013mixed}, as 
\begin{equation}
\label{def: piola-AT}
A_T(\hat{x}) = DF_T(\hat{x}) / \det(DF_T(\hat{x})).    
\end{equation}
Well-known properties of the Piola transform~\cite[(2.1.71)]{boffi2013mixed} show
\begin{equation}
\label{pro-AT:div}
\Div\bv(x)=\frac{1}{\det(DF_T(\hat{x}))}\Hat{\Div}\hbv(\hat{x}) \text{ with } \bv(\bx)=A_T(\hat{x})\hbv(\hat{x}) \text{ and } x = F_T(\hx).
\end{equation}
It follows from~\cite[Lemma 2.3]{neilan2021divergence} that
\begin{align}
\label{ATBounds}
|A_T|_{W^{m,\infty}(\hT)}\le C h_T^{m-1} \text{ and }
|A_T^{-1}|_{W^{m,\infty}(\hT)}
\le \left\{
\begin{array}{ll}
Ch_T^{1+m} & m=0,1,\\
0 & m\ge 2.
\end{array}
\right.
\end{align}

The following lemmas will be used later.
\begin{lemma}[{\cite[Lemma 2.1.6]{boffi2013mixed}}]
\label{lem:pro-AT}
Let $\bv(\bx)=A_T(\hat{x})\hbv(\hat{x})$ and $q(x) = \hq(\hx)$ with $x = F_T(\hx)$. Then it holds that
$$
\begin{aligned}
\int_{T} q \Div \bv  \dx &= \int_{\hT} \hat{\Div} \hq \hbv  \hdx,    \\
\int_{\partial T} \bv \cdot \bn_T \, q \ds &= \int_{\partial \hT}  \hbv \cdot \hbn_{\hT} \, \hq \hds.    \\
\end{aligned}
$$
\end{lemma}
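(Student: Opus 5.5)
The plan is to prove both identities by pulling back to $\hT$ through $x=F_T(\hx)$. The tools are the divergence identity \eqref{pro-AT:div}, the Jacobian formula $\dx = \det(DF_T)\hdx$, and, for the boundary term, either the transformation rule for the normal line element or the Gauss divergence theorem. Throughout I take $\det(DF_T)>0$, which \eqref{Prop-FT:eq2} guarantees, so no absolute values appear.

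For the first identity I read the intended integrand on the right as $\hq\,\hat{\Div}\hbv$, that is, $\int_{\hT}\hq\,\hat{\Div}\hbv\hdx$. Substituting \eqref{pro-AT:div} gives $\Div\bv(x) = \det(DF_T(\hx))^{-1}\hat{\Div}\hbv(\hx)$. The change of variables then yields
\[
\int_T q\Div\bv\dx=\int_{\hT}\hq(\hx)\,\frac{\hat{\Div}\hbv(\hx)}{\det(DF_T(\hx))}\,\det(DF_T(\hx))\hdx=\int_{\hT}\hq\,\hat{\Div}\hbv\hdx,
\]
and the determinants cancel exactly. This is immediate.

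For the boundary identity, the cleanest route avoids any explicit formula for normals. First I assume $q$ (hence $\hq$) is $H^1$, and I establish the auxiliary identity $\int_T\nabla q\cdot\bv\dx=\int_{\hT}\hat\nabla\hq\cdot\hbv\hdx$. The chain rule gives $\nabla q = DF_T^{-\intercal}\hat\nabla\hq$. Combining this with $\bv = DF_T\hbv/\det(DF_T)$ gives $\nabla q\cdot\bv = \hat\nabla\hq\cdot\hbv/\det(DF_T)$, and the determinant again cancels against the Jacobian. Next, Green's formula on $T$ reads $\int_{\partial T}\bv\cdot\bn_T\,q\ds=\int_T q\Div\bv\dx+\int_T\nabla q\cdot\bv\dx$, and the analogous formula holds on $\hT$. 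Applying the first identity together with the auxiliary identity term by term gives the boundary identity. Green's formula applies on $T$ because $F_T$ is a smooth ($C^\infty$, quadratic) diffeomorphism, so $T$ is a Lipschitz domain with piecewise smooth boundary. The general case, with $q$ merely having a trace and $\hbv\in\bH(\Div)$, follows by density once the identity is known for smooth functions; for the lemma as stated, smooth functions suffice.

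The only delicate point is justifying Green's formula on the curved element and the regularity needed for the density argument. An alternative is the direct identity $\bn_T\ds = \det(DF_T)\,DF_T^{-\intercal}\hbn_{\hT}\hds$, namely Nanson's formula. It makes the second identity a pointwise cancellation, $\bv\cdot\bn_T\ds = \hbv\cdot\hbn_{\hT}\hds$. However, verifying Nanson's formula on each curved edge requires a short computation with the tangent vector $DF_T\hat{\boldsymbol t}$ and the rotation by $\pi/2$, so I would present the Green's-formula argument as the main proof.
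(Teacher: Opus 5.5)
Your argument is correct and is essentially the standard proof of the cited result \cite[Lemma 2.1.6]{boffi2013mixed}, which the paper quotes without proof: the volume identity follows from the Piola divergence formula \eqref{pro-AT:div} and the change of variables, and the boundary identity follows from the chain-rule identity $\int_T \nabla q\cdot \bv\dx=\int_{\hT}\hat\nabla\hq\cdot\hbv\hdx$ combined with Green's formula on $T$ and on $\hT$. One remark: the paper later uses this lemma edge by edge, in the proof of \eqref{ProPih::2}, and for that local version the pointwise Nanson identity you mention as an alternative is the more direct route, since Green's formula alone only yields the sum over $\partial T$ unless you add a density argument.
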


\begin{lemma}[\cite{bernardi1989optimal,neilan2021divergence}]
\label{lem::NormInThT}
Suppose $\bv(\bx) = \hbv(\hat{x})$ with $x = F_T(\hx)$ for sufficiently smooth $\bv\in W^{m,p}(T)$.
Then it holds that 
\begin{align*}
|\bv|_{W^{m,p}(T)}&\le C h_T^{2/p-m} \sum_{r=0}^m h_T^{2(m-r)} |\hbv|_{W^{r,p}(\hT)},\\
|\hbv|_{W^{m,p}(\hT)} &\le C h_T^{m-2/p} \sum_{r=0}^m |\bv|_{W^{r,p}(T)},
\end{align*}
for any $T$ with $\hT = F_T^{-1}(T)$.
\end{lemma}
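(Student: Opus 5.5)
The statement is the scaling estimate of Lemma~\ref{lem::NormInThT}, relating $W^{m,p}$ seminorms on a curved element $T$ and on $\hT$ when $\bv(x)=\hbv(\hx)$ with $x=F_T(\hx)$. The approach is the classical Ciarlet--Raviart / Bernardi argument for curved elements: the chain rule for a nonaffine $F_T$ (Fa\`a di Bruno), the change of variables $\dx=\det(DF_T)\hdx$, and the bounds \eqref{Prop-FT:eq1}--\eqref{Prop-FT:eq2}.

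\textbf{First inequality.} For $\bv=\hbv\circ F_T^{-1}$, Fa\`a di Bruno expresses $D^m\bv$ as a sum, over $1\le r\le m$ and multi-indices $(i_1,\dots,i_r)$ with $i_1+\dots+i_r=m$ and $i_j\ge1$, of terms
\[
D^r\hbv\circ F_T^{-1}\,\bigl(D^{i_1}F_T^{-1},\dots,D^{i_r}F_T^{-1}\bigr).
\]
By \eqref{Prop-FT:eq1}, $|F_T^{-1}|_{W^{i,\infty}(T)}\le Ch_T^{-i}$ for $i\le3$. However, the prefactor $h_T^{2(m-r)}$ in the statement is \emph{larger} than $1$ only in the wrong direction, so using the $F_T^{-1}$ bounds directly gives the wrong powers. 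I would therefore write $D^r\hbv$ in terms of derivatives of $\bv$ and $F_T$ instead, i.e.\ differentiate $\hbv=\bv\circ F_T$; that is the natural route for the \emph{second} inequality. For the first, I would invert the triangular system: $\hat D^m\hbv=(DF_T)^{\otimes m}\bigl(D^m\bv\circ F_T\bigr)+\sum_{r<m}(\cdots)$. Here the lower-order terms contain $D^{r}\bv$ multiplied by products of $D^{i_j}F_T$ with $\sum_j i_j=m$. Since $|F_T|_{W^{1,\infty}}\sim h_T$, $|F_T|_{W^{2,\infty}}\le Ch_T^2$, and $D^3F_T=0$ ($F_T$ is quadratic), each lower-order term is bounded by $h_T^{\,r+2(m-r)}|D^r\bv|$. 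Because $DF_T$ is invertible with $|(DF_T)^{-1}|\le Ch_T^{-1}$, I solve recursively:
\[
|D^m\bv|\le C\bigl(h_T^{-m}|\hat D^m\hbv|+\textstyle\sum_{r<m}h_T^{m-2r}|D^r\bv|\bigr).
\]
Induction on $m$ then yields $|D^m\bv|\le Ch_T^{-m}\sum_{r\le m}h_T^{2(m-r)}|\hat D^r\hbv|$ pointwise, with the powers collecting correctly because every $D^2F_T$ factor contributes $h_T^2$ in place of $h_T^{2}$ from two first derivatives, i.e.\ an extra $h_T^{m-r}$ relative to the affine scaling. Integrating the $p$-th power and using $\det DF_T\sim h_T^2$ gives the factor $h_T^{2/p}$.

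\textbf{Second inequality.} Fa\`a di Bruno applied directly to $\hbv=\bv\circ F_T$ gives
\[
|\hat D^m\hbv|\le C\sum_{r=1}^m |D^r\bv\circ F_T|\,\sum\prod_j|D^{i_j}F_T|,
\]
with every product bounded by $h_T^{\sum_j i_j}=h_T^m$. This holds because each factor satisfies $|D^iF_T|\le Ch_T^i$ for $i\le 2$, and the factor vanishes for $i\ge3$. Changing variables with $\det DF_T^{-1}\sim h_T^{-2}$ gives $h_T^{m-2/p}\sum_r|\bv|_{W^{r,p}(T)}$; the $r=0$ term only enters trivially.

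The main obstacle is the bookkeeping in the inductive inversion for the first inequality. One has to check that every term carries at least $h_T^{2(m-r)}$ and not merely $h_T^{m-r}$. This is exactly where quadraticity of $F_T$ and the $h_T^2$ bound on $D^2F_T$ are used. I would carry it out explicitly for $m\le3$, which is what the paper needs, rather than for general $m$.
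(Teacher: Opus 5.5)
The paper gives no proof of this lemma (it is quoted from Bernardi and Neilan--Otus), so your argument has to stand on its own. Your proof of the second inequality is correct: every Fa\`a di Bruno product is bounded by $h_T^{\sum_j i_j}=h_T^m$, and the change of variables supplies $h_T^{-2/p}$.

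The first inequality fails at the step where you bound each lower-order term of $\hat{D}^m\hbv=(DF_T)^{\otimes m}(D^m\bv\circ F_T)+\sum_{r<m}(\cdots)$ by $h_T^{r+2(m-r)}|D^r\bv|$. A term with $r$ factors $D^{i_j}F_T$, where $i_j\in\{1,2\}$ and $\sum_j i_j=m$, contains exactly $m-r$ factors $D^2F_T$ and $2r-m$ factors $DF_T$. The best bound is therefore $h_T^{2(m-r)}h_T^{2r-m}|D^r\bv|=h_T^{m}|D^r\bv|$, independent of $r$. A factor $D^2F_T$ costs $h_T^2$, which is exactly what the two first-derivative factors it replaces would cost; your own phrase ``$h_T^2$ in place of $h_T^2$'' says as much. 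So there is no extra power. Inverting $(DF_T)^{\otimes m}$ gives $|D^m\bv|\le C\bigl(h_T^{-m}|\hat{D}^m\hbv|+\sum_{1\le r<m}|D^r\bv|\bigr)$, and induction yields only
\[
|D^m\bv|\le Ch_T^{-m}\sum_{r=1}^m h_T^{m-r}|\hat{D}^r\hbv| .
\]
The recursion you wrote, with $h_T^{m-2r}$, would not close your induction even on its own terms.

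This is not a counting slip that more care could repair: with $h_T^{2(m-r)}$ the first inequality is false for $m=2$ under the stated hypotheses. Take $0<h<1$, $c\ne0$, $\tT=h\hT$ and $G_h(y)=(y_1,\,y_2+cy_1(y_1-h))^\intercal$, so that $F_T(\hx)=(h\hx_1,\,h\hx_2+ch^2\hx_1(\hx_1-1))^\intercal$. This map fixes the vertices, has $\det DF_T=h^2$ and $|D^2F_T|=2|c|h^2$, and satisfies \eqref{Prop-FT:eq1}--\eqref{Prop-FT:eq2}. For $\hbv=(\hx_2,0)^\intercal$ one finds $\bv_1(x)=(x_2-cx_1(x_1-h))/h$. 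Hence $\partial_{11}\bv_1=-2c/h$ and $|\bv|_{W^{2,p}(T)}\ge 2|c|h^{-1}|T|^{1/p}\sim h^{2/p-1}$. The right-hand side, by contrast, is $O(h^{2/p})$ because $\hat{D}^2\hbv=0$. The extra factor $h_T^{m-r}$ you were aiming for would require $|D^2F_T|\le Ch_T^3$, which genuinely curved edges violate.

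What you can prove, and what your inductive inversion actually delivers, is
\[
|\bv|_{W^{m,p}(T)}\le Ch_T^{2/p-m}\sum_{r=1}^m h_T^{m-r}|\hbv|_{W^{r,p}(\hT)} .
\]
This holds for every $m$, since only $D^3F_T=0$ is used, so there is no need to stop at $m\le3$. That matters, because the paper needs $m=4$, $p=\infty$ via Lemma~\ref{lem::normH1XThXT} in Lemma~\ref{I3Claim}. This weaker form is all the paper uses. In the proof of Lemma~\ref{lem::normH1XThXT}, $h_T^{m-r}|A_T|_{W^{r-j,\infty}(\hT)}\le Ch_T^{m-j-1}$, which gives the same conclusion $h_T^{2/p-m-1}\sum_j h_T^{m-j}|\hbv|_{W^{j,p}(\hT)}$. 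Elsewhere the first inequality is invoked only with $m\le1$, where the printed version is true.
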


The following lemma describes the norm relationship of functions before and
after the Piola transform, which can be obtained by~\eqref{ATBounds} and Lemma~\ref{lem::NormInThT}.
\begin{lemma}
\label{lem::normH1XThXT}
Suppose $\bv(\bx)=A_T(\hat{x})\hbv(\hat{x})$ with $x = F_T(\hx)$ for sufficiently smooth $\bv\in W^{m,p}(T)$. Then it holds that
\begin{align*}
|\bv|_{W^{m,p}(T)}&\le C h_T^{2/p-m-1} \sum_{r=0}^m  h_T^{m-r}|\hbv|_{W^{r,p}(\hT)},\\
|\hbv|_{W^{m,p}(\hT)} &\le C h_T^{m+1-2/p} \sum_{r=0}^m |\bv|_{W^{r,p}(T)},
\end{align*}
for any $T$ with $\hT = F_T^{-1}(T)$.
\end{lemma}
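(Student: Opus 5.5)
The plan is to view the Piola transform as an affine-type change of variables composed with multiplication by the matrix field $A_T$, and then combine the chain and Leibniz rules with \eqref{ATBounds} and Lemma~\ref{lem::NormInThT}. Write $\bv = \bw\circ F_T^{-1}$ with $\bw(\hx) \coloneqq A_T(\hx)\hbv(\hx)$. Lemma~\ref{lem::NormInThT} applied to $\bw$ gives
\[
|\bv|_{W^{m,p}(T)} \le C h_T^{2/p-m}\sum_{s=0}^m h_T^{2(m-s)}|\bw|_{W^{s,p}(\hT)} .
\]
I then expand $D^s\bw$ by the Leibniz rule, which gives a sum of products $D^{s-r}A_T\, D^r\hbv$ for $0\le r\le s$. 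Each factor $D^{s-r}A_T$ is bounded in $L^\infty$ by $Ch_T^{s-r-1}$ thanks to \eqref{ATBounds}. This yields
\[
|\bw|_{W^{s,p}(\hT)}\le C\sum_{r=0}^s h_T^{s-r-1}|\hbv|_{W^{r,p}(\hT)} .
\]

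Next I substitute this into the previous bound and collect the powers of $h_T$ multiplying $|\hbv|_{W^{r,p}}$. The exponent is $2/p-m+2(m-s)+s-r-1 = 2/p-m-1+(m-r)+(m-s)$. Since $h_T\le C$, the extra factor $h_T^{m-s}$ is harmless, and taking the worst case $s=m$ gives exactly the first inequality.

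For the second inequality I invert the relation: $\hbv = A_T^{-1}\bw$ with $\bw = \bv\circ F_T$. The Leibniz rule together with the bounds on $A_T^{-1}$ from \eqref{ATBounds} gives
\[
|\hbv|_{W^{m,p}(\hT)}\le C\sum_{r}|A_T^{-1}|_{W^{m-r,\infty}}|\bw|_{W^{r,p}}\le C\bigl(h_T|\bw|_{W^{m,p}}+h_T^{2}|\bw|_{W^{m-1,p}}\bigr),
\]
where the terms involving higher derivatives of $A_T^{-1}$ vanish. The second inequality of Lemma~\ref{lem::NormInThT} then bounds $|\bw|_{W^{r,p}(\hT)}\le Ch_T^{r-2/p}\sum_{k\le r}|\bv|_{W^{k,p}(T)}$. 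For $r=m$ this gives $h_T^{m+1-2/p}$. For $r=m-1$ it gives $h_T^{2+m-1-2/p}=h_T^{m+1-2/p}$, which matches exactly. Summing the two contributions yields the claim.

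The main obstacle I expect is the bookkeeping of powers of $h_T$, and in particular checking that $|A_T|_{W^{k,\infty}}\le Ch_T^{k-1}$ holds for every $k$ up to $m$, since \eqref{ATBounds} is stated for general $m$. Compared with this, using $h_T\le C$ to discard the surplus powers is routine. Sufficient smoothness of $\bv$ ensures that all the Leibniz expansions are valid.
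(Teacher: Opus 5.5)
Your proposal is correct and is essentially the paper's proof: apply the first inequality of Lemma~\ref{lem::NormInThT} to $A_T\hbv$, expand with Leibniz's rule, invoke \eqref{ATBounds}, and collect powers of $h_T$. The paper keeps the full double sum with exponent $2m-r-j$, whereas you bound it by its worst case using $h_T\le C$; your explicit derivation of the second inequality from $\hbv=A_T^{-1}(\bv\circ F_T)$, using that derivatives of $A_T^{-1}$ of order $\ge 2$ vanish, is exactly the ``similar procedure'' the paper leaves to the reader.
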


\begin{proof}
The first inequality in Lemma \ref{lem::NormInThT}, Leibniz's rule, and \eqref{ATBounds} give
\begin{equation*}
\begin{aligned}
|\bv|_{W^{m,p}(T)}&\le C h_T^{2/p-m} \sum_{r=0}^m h_T^{2(m-r)} |A_T \hbv|_{W^{r,p}(\hT)} \\
& \le C h_T^{2/p-m} \sum_{r=0}^m h_T^{2(m-r)}  \sum_{j=0}^r  |A_T|_{W^{r-j,\infty}(\hT)} |\hbv|_{W^{j,p}(\hT)} \\
& \le C h_T^{2/p-m-1} \sum_{j=0}^m \sum_{r=j}^m h_T^{2m-r-j} |\hbv|_{W^{j,p}(\hT)},
\end{aligned}   
\end{equation*}
which proves the first inequality. The second inequality follows from similar procedures. 
\end{proof}

\subsection{Affine Fortin--Soulie element}
Define $\hbW(\hT)\coloneqq \bP_2(\hT)$ and recall the space $\hbPhi(\hT)$ defined on the reference triangle $\hT$ in \eqref{def:hbPhi}. The corresponding spaces $\tbW(\tT)$ and $\tbPhi(\tT)$ on the affine triangle $\tT \in \tcalTh$ with $F_{\tT}:~\hT \rightarrow \tT$ are defined as
\begin{equation*}
\begin{aligned}
\tbW(\tT) & \coloneqq \{ \tbw_T:~\tbw_T(\tbx) = \hbw_T(\hat{x}) \text{ with } \hbw_T \in \hbW(\hT) \text{ and } \tbx = F_{\tT}(\hat{x})\}, \\  
\tbPhi(\tT) &\coloneqq  \{\tbphi_T:~\tbphi_T(\tbx)=\hbphi_T(\hat{x})\text{ with } \hbphi_T \in \hbPhi(\hT) \text{ and } \tbx = F_{\tT}(\hat{x})\}.    
\end{aligned}
\end{equation*}
The affine Fortin--Soulie space~\cite{fortin1983non} on $\tcalTh$ is defined as 
$$
\begin{aligned}
\tbV_h \coloneqq & \{\tbv_h:~ \tbv_h|_{\tT}\in\tbW(\tT) \text{ for all } \tT\in\tcalTh, \tbv_h \text{ is continuous (zero) at the two}  \\
& \text{Gauss--Legendre points of each internal (boundary) edge of } \tcalTh\},
\end{aligned}
$$
which is a nonconforming piecewise quadratic approximation of $\bH^1_0(\tOmega_h)$. It follows from~\cite[Proposition 1]{fortin1983non} that $\tbV_h = \tbW_h \oplus \tbPhi_h$, where
\begin{equation*}
\begin{aligned}
\tbW_h & \coloneqq \{\tbw_h \in \bH^1_0(\tOmega_h):~\tbw_h|_{\tT}\in\tbW(\tT)\ \text{ for all } \tT \in \tcalTh \},\\  
\tbPhi_h & \coloneqq \{\tbphi_h \in \bL^2(\tOmega_h):~\tbphi_h|_{\tT}\in\tbPhi(\tT)\ \text{ for all } \tT\in\tcalTh\},
\end{aligned}    
\end{equation*}
are the standard conforming quadratic Lagrange space of $\bH^1_0(\tOmega_h)$ and the nonconforming quadratic Gauss--Legendre bubble function space, respectively. 

Introduce  the global piecewise linear space on the affine triangulation $\tcalTh$ with zero integration value as
$$
\tQ_h \coloneqq \{\tq_h \in L^2_0(\tOmega_h):~ \tq_h |_{\tT}\in\tQ(\tT) \text{ for all } \tT\in\tcalTh\},
$$
with $\tQ(\tT) \coloneqq \{ \tq_{\tT} :~ \tq_{\tT}(\tbx) = \hq_{\hT}(\hat{x}) \text{ with } \hq_{\hT} \in \hQ(\hT) \text{ and } \tbx = F_{\tT}(\hat{x})\}$ and $\hQ(\hT) \coloneqq P_1(\hT)$.

The affine Fortin--Soulie method employs the pair $(\tbV_h, \tQ_h)$ for solving the Stokes problem on the affine triangulation $\tcalTh$. The method is element-wise divergence-free since $\tDiv_h \tbV_h \subset \tQ_h$ and well-posedness since $(\tbV_h, \tQ_h)$ satisfies the inf-sup condition. Readers can refer to~\cite{fortin1983non} for detailed proof. 

Recall that the isoparametric mapping $G_h:~\tOmega_h\to\Omega_h$. Given $\tT\in\tcalTh$ and $T\in\calTh$ with $T=\Gh(\tT)$, define the local operator $\boldsymbol{\Upsilon}_T:~\bH^2(\tT)\to \bH^2(T)$ as
\begin{equation}
\label{def:bUpsilon}
(\boldsymbol{\Upsilon}_T\tbv)(\bx) = \tbv(\tbx)\, \text{ for all } \tbx \in\tT \text{ with } x=\Gh(\tbx).    
\end{equation}
The global operator $\boldsymbol{\Upsilon}_h$ can be given by $\boldsymbol{\Upsilon}_h|_T = \boldsymbol{\Upsilon}_T$ for all $T \in \calTh$.
Define the isoparametric velocity space on $\Omega_h$ as
\begin{equation}
\label{def::Vhiso}
\bVh^{iso}\coloneqq\{\bv_h \in \bL^2(\Omega_h):~\bv_h =\boldsymbol{\Upsilon}_h \tbv_h \text{ with }  \tbv_h \in\tbV_h\}.
\end{equation}
Since the functions in $\bVISO$ are continuous at the Gauss--Legendre points of all interior edges and vanish at the Gauss--Legendre points of all boundary edges, $\bVISO$ is a nonconforming finite element approximation of $\bH^1_0(\Omega_h)$. Note that $\bVISO$, paired with $Q_h$ in \eqref{defQinterpolate}, cannot yield a divergence-free solution of the Stokes problem~\cite{neilan2021divergence}. In the following, the curved Fortin--Soulie element space $\bVh$ will be obtained by mapping $\hbW(\hT)$ and $\hbPhi(\hT)$ via an isoparametric mapping of the geometry and a Piola transform of the function space.

\section{Curved Fortin--Soulie Element}
\label{set::curved_fortin_soulie_element}

This section constructs the nonconforming Fortin--Soulie element on curved domains. The finite element method for the Stokes problem is formulated, and its well-posedness is verified by proving a global discrete inf-sup condition using a two-step technique.

\subsection{Curved Fortin--Soulie element}
Recall the definition of the Piola transform $A_T$ in \eqref{def: piola-AT}. Define the spaces on $T \in \calTh$ with $F_T:~\hT \to T$ by the Piola transform as 
\begin{equation}
\label{def:WandPhi}
\begin{aligned}
\bW(T) &\coloneqq \{\bw_T:~ \bw_T(\bx) = A_T(\hat{x}) \hbw_T(\hat{x}) \text{ with } \hbw_T \in \hbW(\hT) \text{ and }x = F_{T}(\hat{x}) \}, \\
\bPhi(T) &\coloneqq \{\bphi_T:~ \bphi_T(\bx) = A_T(\hat{x}) \hbphi_T(\hat{x}) \text{ with } \hbphi_T \in \hbPhi(\hT) \text{ and }x = F_{T}(\hat{x}) \}.     
\end{aligned}
\end{equation}
Note that $\bW(T) = \tbW(\tilde T)$ if $F_T$ is an affine mapping. For any $\bw_T \in \bW(T)$ and a straight edge $e\subset \partial T$, $\bW(T)$ is not necessarily a piecewise polynomial space, and the restriction of $\bw_T$ to $e$ is not necessarily a polynomial even though $F^{-1}_T$ is affine on $e$. Nonetheless, the normal component $\bw_T \cdot \bn_e|_e$ with $\bn_e$ the unit normal of the straight edge $e$ is a quadratic polynomial~\cite[Lemma 3.1]{neilan2021divergence}. Similar results hold for functions $\bphi_T \in \bPhi(T)$. The local space $\bW(T)$ has the following property. 

\begin{lemma}
\label{lem:uniqueXT}
A function $\bw_T \in\bW(T)$ is uniquely determined by its values $\bw_T(\ba)$ with $\ba\in \NT = \{ \ba_i\}_{i=1}^6$. Given $\bv \in \bH^3(T)$, define the nodal interpolation operator $\bIT:~\bH^3(T)\to\bW(T)$ by $(\bIT\bv)(\ba)=\bv(\ba)$ for all $\ba\in\NT$. It holds that
\begin{equation*}
\|\bv -\bIT \bv \|_{H^m(T)}\leq Ch_T^{3-m}\|\bv\|_{H^3(T)} \quad m=0,1 \text{ for all } \bv \in\bH^3(T).    
\end{equation*}
\end{lemma}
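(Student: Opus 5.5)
Unisolvence is immediate. The map $\hbw_T\mapsto \bw_T=A_T\hbw_T\circ F_T^{-1}$ is a bijection from $\hbW(\hT)=\bP_2(\hT)$ onto $\bW(T)$, because $A_T(\hx)$ is invertible at every $\hx$ by \eqref{Prop-FT:eq2}. Since $\ba_i=F_T(\hat a_i)$, we have
$$
\bw_T(\ba_i)=A_T(\hat a_i)\hbw_T(\hat a_i),
$$
so prescribing $\bw_T(\ba_i)$ for $i=1,\dots,6$ is the same as prescribing $\hbw_T(\hat a_i)=A_T(\hat a_i)^{-1}\bw_T(\ba_i)$. Quadratic Lagrange interpolation on $\hT$ is unisolvent, so $\hbw_T$, and hence $\bw_T$, is uniquely determined. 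The same observation shows that $\bIT$ is well defined and gives the reference-element identity
$$
\widehat{\bIT\bv}=\hat I(\hbv),\qquad \hbv:=A_T^{-1}(\bv\circ F_T),
$$
where $\hat I$ denotes the quadratic Lagrange interpolant on $\hT$. In words, $\bIT$ is conjugate under the Piola transform to $\hat I$.

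For the error estimate I pull back to $\hT$ and apply Bramble--Hilbert. We have $\bv-\bIT\bv=A_T(\hbv-\hat I\hbv)\circ F_T^{-1}$. The first inequality of Lemma~\ref{lem::normH1XThXT} with $p=2$ and $m=0,1$ gives
$$
|\bv-\bIT\bv|_{H^m(T)}\le C h_T^{-m}\sum_{r=0}^m h_T^{m-r}|\hbv-\hat I\hbv|_{H^r(\hT)}\le C h_T^{-m}\|\hbv-\hat I\hbv\|_{H^1(\hT)} .
$$
Since $\hat I$ is bounded on $\bH^2(\hT)\subset \bC^0$ and reproduces $\bP_2$, Bramble--Hilbert yields $\|\hbv-\hat I\hbv\|_{H^1(\hT)}\le C|\hbv|_{H^3(\hT)}$. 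The second inequality of Lemma~\ref{lem::normH1XThXT} then gives
$$
|\hbv|_{H^3(\hT)}\le C h_T^{3}\sum_{r=0}^3|\bv|_{H^r(T)}\le C h_T^3\|\bv\|_{H^3(T)} .
$$
Combining the three bounds gives $|\bv-\bIT\bv|_{H^m(T)}\le Ch_T^{3-m}\|\bv\|_{H^3(T)}$. For $m=1$, adding the $m=0$ bound shows that the full $H^1$ norm obeys the same estimate.

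The main obstacle is the last step. It produces the full norm $\|\bv\|_{H^3(T)}$ rather than the seminorm $|\bv|_{H^3(T)}$. This happens because $\bIT$ does not reproduce polynomials on $T$: $A_T$ and $F_T$ are nonaffine, so Piola-mapped quadratics are not polynomials. That is precisely why the statement uses $\|\cdot\|_{H^3(T)}$, and the rate is still correct because the lower-order terms carry the same power $h_T^3$. The points to check are that the Bramble--Hilbert bound $\|\hbv-\hat I\hbv\|_{H^1(\hT)}\le C|\hbv|_{H^3(\hT)}$ has a constant independent of $T$, which holds because it lives on the fixed reference triangle, and that Lemma~\ref{lem::normH1XThXT} applies to $\bv\in\bH^3(T)$ by density. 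The $h$-scaling then follows directly from \eqref{ATBounds} and \eqref{Prop-FT}.
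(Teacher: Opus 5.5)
Your proposal is correct and follows essentially the same route as the paper. Unisolvence comes from passing the nodal values through the invertible matrices $A_T(\hat a_i)$ to quadratic Lagrange unisolvence on $\hT$. The estimate comes from the Piola pull-back identity $\widehat{\bIT\bv}=\hat I\hbv$, Bramble--Hilbert on the fixed reference triangle, and the scaling bounds of Lemma~\ref{lem::normH1XThXT}; this is the argument of \cite[Lemma 3.5]{neilan2021divergence} that the paper cites without writing out, and you supply the omitted details.
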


\begin{proof}
The number of degrees of freedom equals the dimension of $\bW(T)$. Given any $\bw_T(\bx) = A_T(\hat{x}) \hbw_T (\hat{x})$ for some $\hbw_T(\hat{x})\in\hbW(\hT)$ with $x = F_T(\hx)$ and $\bw_T(\ba)=0$ for $\ba\in \NT$. Since $A_T(\hat{x})$ is invertible, it holds that
$$
\hbw_T(\hat{a}) = A^{-1}_T(\hat{a}) \bw_T(\ba) = 0 \text{ for all } \hat{a} \in \hNT = \{ \hat{a}_i\}_{i=1}^6.
$$
This implies $\bw_T=0$ since these values uniquely determine $\hbw_T$.
The approximation property is established by following an argument analogous to that presented in~\cite[Lemma 3.5]{neilan2021divergence}.
\end{proof}

Define the global spaces on $\calTh$ as 
\begin{equation*}
\begin{aligned}
\bWh  &\coloneqq  \{ \bw_h:~\bw_h|_T \in \bW(T) \text{ for all } T \in \calTh, \bw_h \text{ is continuous (zero) on the two}  \\
& \text{endpoints and the midpoint of each internal (boundary) edge of } \calTh \},  \\
\bPhi_h &\coloneqq \{ \bphi_h \in \bL^2(\Omega_h):~ \bphi_h|_T \in \bPhi(T) \text{ for all } T \in \calTh  \}.  
\end{aligned}    
\end{equation*}
Following similar procedures as in~\cite[Theorem 4.2]{neilan2021divergence}, one can obtain 
\begin{equation}
\label{pro-Wh:space}
\bWh \subset \bH_0(\Div;\Omega_h)\coloneqq \{\bv\in\bL^2(\Omega_h):~ \Div \bv \in L^2(\Omega_h),~\bv\cdot\bn|_{\partial\Omega_h}=0\}.    
\end{equation}
Based on these two global spaces, define the discrete velocity space on $\calTh$ as 
\begin{equation}
\label{def::bVh}
\bVh\coloneqq \{\bv_h \in \bL^2(\Omega_h):~\bv_h=\bw_h+\bphi_h \text{ with } \bw_h \in\bWh \text{ and } \bphi_h \in\bPhi_h\}.
\end{equation}

\begin{remark}
\label{remark:Vh}
For any internal edge $e$ shared by two affine elements, $\bw_h \in \bWh$ is continuous on $e$, while $\bphi_h \in \bPhi_h$ satisfies continuity only at the two Gauss–Legendre points. Thus, the velocity $\bv_h = \bw_h + \bphi_h \in \bVh$ is continuous at these two points. For any internal edge $e$ shared by elements with curved edges, $\bw_h \in \bWh$ is continuous solely at the endpoints and the midpoint of $e$. This discrepancy implies that the velocity space $\bVh$ fails to satisfy continuity at any point along $e$.
\end{remark}

The following lemma establishes a stable decomposition of $\bVh$ on local elements, whose proof is technical and given in Appendix \ref{app::Prooflem:VTestimate1}.
This result facilitates the analysis of some properties of $\bVh$ by decomposing the space into $\bWh$ and $\bPhi_h$.

\begin{lemma}
\label{lem:VTestimate1}
Let $\bv_T = \bw_T + \bphi_T$ with $\bw_T \in\bW(T)$ and $\bphi_T\in\bPhi(T)$. If there exists a point $\ba\in\NT$ such that $\bw_T(\ba)=0$, then it holds that
\begin{align}
\label{bvcbvbL2Bd} \|\bw_T\|_{L^2(T)}+\|\bphi_T\|_{L^2(T)}&\leq C\|\bv_T\|_{L^2(T)}.
 \end{align}
 Moreover, if $\bw_T$ vanishes on the two endpoints and the midpoint of an edge $e$ of $T$, then $\bw_T|_e = 0$, and for $h_T$ small enough, it holds that 
 \begin{align}\label{bvL2ConByNabv}
 \|\bv_T\|_{L^2(T)}&\leq Ch_T\|\nabla\bv_T\|_{L^2(T)}.
\end{align}
\end{lemma}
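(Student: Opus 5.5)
The plan is to pull everything back to the reference triangle $\hT$, where the question becomes one about a fixed finite-dimensional space, and then transfer the bounds to $T$ with Lemma~\ref{lem::normH1XThXT}. Write $\bv_T = A_T\hbv$ with $\hbv = \hbw+\hbphi$, $\hbw\in\hbW(\hT)=\bP_2(\hT)$ and $\hbphi\in\hbPhi(\hT)$. Lemma~\ref{lem::normH1XThXT} with $m=0$, $p=2$ gives the two-sided equivalence $\|\bv_T\|_{L^2(T)}\simeq \|\hbv\|_{L^2(\hT)}$, uniformly in $h_T$. The same holds for $\bw_T$ and $\bphi_T$ separately. Since $A_T(\hat a)$ is invertible, the hypothesis $\bw_T(\ba)=0$ is equivalent to $\hbw(\hat a)=0$ at the corresponding node $\hat a\in\hNT$. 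This reference condition no longer depends on $T$.

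For \eqref{bvcbvbL2Bd} it therefore suffices to show $\|\hbw\|_{L^2(\hT)}+\|\hbphi\|_{L^2(\hT)}\le C\|\hbw+\hbphi\|_{L^2(\hT)}$ on each of the six subspaces $\{(\hbw,\hbphi):\hbw(\hat a_i)=0\}$. Each subspace is finite dimensional and independent of $T$, so by norm equivalence it is enough to check injectivity of $(\hbw,\hbphi)\mapsto\hbw+\hbphi$ there. Taking the maximum of the six constants then gives a uniform $C$. For injectivity, suppose $\hbw=-\hbphi=-(c_1\phi_{\hT},c_2\phi_{\hT})^\intercal$. Evaluating at $\hat a_i$ gives $0=c_j\phi_{\hT}(\hat a_i)$. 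A direct computation gives $\phi_{\hT}=-1$ at the vertices and $\phi_{\hT}=1/2$ at the edge midpoints, so $\phi_{\hT}(\hat a_i)\neq0$. Hence $c_1=c_2=0$ and both components vanish.

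For the second part, first show $\bw_T|_e=0$. The condition $\bw_T=0$ at the two endpoints and the midpoint of $e$ gives $\hbw=0$ at three distinct points of $\hat e=F_T^{-1}(e)$. Since $\hbw|_{\hat e}$ is a quadratic polynomial in the edge parameter, this forces $\hbw|_{\hat e}=0$, and hence $\bw_T|_e=A_T\hbw|_{\hat e}=0$. Next, prove the reference Poincaré-type inequality $\|\hbv\|_{L^2(\hT)}\le C|\hbv|_{H^1(\hT)}$ on the finite-dimensional space $\{\hbw+\hbphi:\hbw|_{\hat e}=0\}$, again by checking that the kernel of the seminorm is trivial.

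Suppose $\hbw+\hbphi\equiv \mathbf c$ is constant. Restricting to $\hat e$ gives $c_j\phi_{\hT}|_{\hat e}=\mathbf c_j$. But $\phi_{\hT}|_{\hat e}$ is not constant: it vanishes at the two Gauss--Legendre points and equals $-1$ at the endpoints. Hence $c_j=0$ and $\mathbf c=0$. By norm equivalence in finite dimensions the reference inequality follows, with one constant for each of the three edges.

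The main obstacle is transferring this inequality back to $T$, because derivatives of $A_T$ produce a zeroth-order term. By Lemma~\ref{lem::normH1XThXT} with $m=1$, $p=2$, we have $|\hbv|_{H^1(\hT)}\le Ch_T\big(|\bv_T|_{H^1(T)}+\|\bv_T\|_{L^2(T)}\big)$. Combining this with the $m=0$ equivalence gives
\[
\|\bv_T\|_{L^2(T)}\le C\|\hbv\|_{L^2(\hT)}\le Ch_T\|\nabla\bv_T\|_{L^2(T)}+Ch_T\|\bv_T\|_{L^2(T)} .
\]
For $h_T$ small enough that $Ch_T\le 1/2$, the last term can be absorbed into the left-hand side, which yields \eqref{bvL2ConByNabv}.
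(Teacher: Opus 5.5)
Your proposal is correct and follows essentially the same route as the paper: you pull back via the Piola map and use finite-dimensional norm equivalence on $\hT$. Your injectivity check, using $\phi_{\hT}(\hat a)\neq 0$ at every Lagrange node, is equivalent to the paper's compactness/contradiction argument. You then show that the $H^1$-seminorm has trivial kernel because $\phi_{\hT}$ vanishes at the Gauss--Legendre points of $\hat e$, and you absorb the $h_T\|\bv_T\|_{L^2(T)}$ term from Lemma~\ref{lem::normH1XThXT} for small $h_T$, as the paper does.
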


By Lemma~\ref{lem:VTestimate1}, the following theorem shows that $\bVh$ is the direct sum of the $\bH_0(\Div;\Omega_h)$ conforming space $\bWh$ and the nonconforming bubble space $\bPhi_h$.

\begin{theorem}
\label{the::VhEqXhDirSumPhih}
It holds that $\bVh = \bWh \oplus \bPhi_h$.     
\end{theorem}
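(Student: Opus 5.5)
By definition \eqref{def::bVh}, $\bVh = \bWh + \bPhi_h$. It therefore only remains to show that the sum is direct, i.e.\ that $\bWh \cap \bPhi_h = \{\mathbf{0}\}$. Equivalently, if $\bw_h + \bphi_h = \mathbf{0}$ with $\bw_h\in\bWh$ and $\bphi_h\in\bPhi_h$, then $\bw_h = \bphi_h = \mathbf{0}$.

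The argument is local, and the first step is to reduce to a single element. Take $\bw_h\in\bWh$, $\bphi_h\in\bPhi_h$ with $\bw_h+\bphi_h=\mathbf{0}$, and fix $T\in\calTh$. On $T$ we have $\bv_T \coloneqq \bw_h|_T + \bphi_h|_T = \mathbf{0}$, where $\bw_h|_T\in\bW(T)$ and $\bphi_h|_T\in\bPhi(T)$. The estimate \eqref{bvcbvbL2Bd} of Lemma~\ref{lem:VTestimate1} would give $\|\bw_h\|_{L^2(T)}+\|\bphi_h\|_{L^2(T)}\le C\|\bv_T\|_{L^2(T)}=0$ immediately, but it requires a node $\ba\in\NT$ with $\bw_h(\ba)=0$. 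Such a node is not available a priori for interior elements, so it is preferable to argue directly on the reference element.

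Pulling back by the Piola map, $\bv_T = A_T(\hbw + \hbphi)$ with $\hbw\in\bP_2(\hT)$ and $\hbphi\in\hbPhi(\hT)$. Since $A_T$ is pointwise invertible, $\bv_T=\mathbf{0}$ is equivalent to $\hbw + \hbphi = \mathbf{0}$ in $\bP_2(\hT)$. Hence it suffices to show $\bP_2(\hT)\cap\hbPhi(\hT)=\{\mathbf{0}\}$. But $\hbPhi(\hT)\subset\bP_2(\hT)$ by definition, so this intersection is all of $\hbPhi(\hT)$. The naive local argument therefore fails, and the correct statement must be the global one: $\bWh\cap\bPhi_h=\{\mathbf{0}\}$ relies on the continuity and boundary conditions imposed on $\bWh$, mirroring \cite[Proposition 1]{fortin1983non}. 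The main obstacle is to transfer Fortin--Soulie's counting or uniqueness argument to the mapped setting, where $\bWh$ is only continuous at endpoints and midpoints of edges.

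I would carry this out as follows. Suppose $\bphi_h\in\bWh\cap\bPhi_h$. Its pullback on each $T$ is $(\alpha_T\phi_{\hT},\beta_T\phi_{\hT})^\intercal$. Evaluate at the Lagrange nodes: $\phi_{\hT}=-1$ at the vertices and $\phi_{\hT}=1/2$ at the midpoints. Membership in $\bWh$ forces continuity of $A_T(\hat a)(\alpha_T,\beta_T)^\intercal\phi_{\hT}(\hat a)$ at all vertices and midpoints, together with vanishing at boundary nodes. Any boundary element $T$ has a boundary edge, so $A_T(\hat a)(\alpha_T,\beta_T)^\intercal=0$ there, and invertibility of $A_T$ gives $\alpha_T=\beta_T=0$. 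For an interior element, I would instead invoke \eqref{bvL2ConByNabv}. Alternatively, I would propagate along the mesh: if $\bphi_h|_{T'}=0$ on a neighbour $T'$ sharing edge $e$ with $T$, continuity at the midpoint of $e$ gives $A_T(\hat a)(\alpha_T,\beta_T)^\intercal\cdot\tfrac12=0$, hence $\bphi_h|_T=0$. Since the mesh is connected and contains boundary elements, induction over elements starting from the boundary yields $\bphi_h=\mathbf{0}$, so the sum is direct.
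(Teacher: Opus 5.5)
Your final argument is correct and is essentially the paper's proof: start from an element with a boundary edge, then propagate across shared edges along a chain of elements, using that $\bWh$-continuity at the edge nodes forces the neighbouring bubble to vanish. The only difference is the key step: you use the nonzero nodal value $\phi_{\hT}(\hat a)=1/2$ (or $-1$) together with invertibility of $A_T(\hat a)$, whereas the paper invokes Lemma~\ref{lem:VTestimate1} to get vanishing on the whole edge. Your route is marginally more elementary, but the aside about using \eqref{bvL2ConByNabv} for interior elements should be dropped, since that estimate cannot show a function is zero and the propagation argument already covers interior elements.
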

\begin{proof}
Given $\bv_h \in\bWh\cap\bPhi_h$ and let $\bv_T \coloneqq \bv_h|_T$ for any $T \in \calTh$. For $T \in \calTh$, if there is a boundary edge $e \subset \partial T \cap \partial \Omega_h$, it follows from the definition of $\bWh$ and Lemma~\ref{lem:VTestimate1} that $\bv_T|_e = 0$. This gives $\bv_T\equiv0$ since $\bv_h \in\bPhi_h$ implies $\bv_T$ also belonging to $\bPhi(T)$. Otherwise, there exists a sequence of elements $T_l, T_{l-1}, \cdots, T_1, T_0$ such that $T_l=T$, consecutive elements share a common edge, and there exists a boundary edge $e \subset \partial T_0 \cap \partial \Omega_h$. Denote the edge $e^\prime = \partial T_1 \cap \partial T_0$. Note that $\bv_{T_0}\equiv0$ implies $\bv_{T_0}|_{e^\prime}=0$. This, the definition of $\bWh$, and Lemma \ref{lem:VTestimate1} show that $\bv_{T_1}|_{e^\prime}=0$. Again, this gives $\bv_{T_1} \equiv0$ since $\bv_h \in\bPhi_h$ implies $\bv_{T_1}$ also belonging to $\bPhi(T_1)$. Proceeding inductively across all elements gives $\bv_{T_l} \equiv0$ on the element $T_l$, which completes the proof. 
\end{proof}

 \subsection{The connection between $\bVh$ and $\bVh^{iso}$.}

According to Remark \ref{remark:Vh}, functions in $\bVh$ lack continuity across certain internal edges. Based on the direct sum decomposition of $\bVh$ in Theorem~\ref{the::VhEqXhDirSumPhih}, this subsection establishes a connection between $\bVh$ and the standard isoparametric space $\bVh^{iso}$, which serves as the foundation for the weak continuity analysis in Section \ref{sec::weak_continuity}.

To begin with, following~\cite[Section 3.2]{neilan2021divergence}, the connection between subspaces of $\bVh$ and $\tbV_h$ is constructed as follows. Let $\tT\in\tcalTh$ and $T\in\calTh$ with $T=\Gh(\tT)$. Introduce the local operator $\Psi^W_T:~\tbW(\tT)\to\bW(T)$ uniquely by the conditions
\begin{equation*}
(\Psi^W_T\tbw_T)(a)=\tbw_T(\tba) \text{ for all } \tba\in\tNT \text{ and } a=\Gh(\tba).    
\end{equation*} 
Then the global operator $\Psi^W_h$ can be given by $\Psi^W_h|_T = \Psi^W_T$ for all $T \in \calTh$. Define the local operator $\Psi^\Phi_T:~\tbPhi(\tT)\to\bPhi(T)$ uniquely by the condition
\begin{equation*}
(\Psi^\Phi_T \tbphi_T)(b)=\tbphi_T(\tilde{b}) \text{ for } \tilde{b} \text{ is the barycenter of } \tT \text{ and } b=\Gh(\tilde{b}).   
\end{equation*}
Then the global operator $\Psi_h^\Phi$ can be given by $\Psi_h^\Phi|_T = \Psi_T^\Phi$ for all $T \in \calTh$. As a result, one can obtain
\begin{equation}
\label{eq:relations}
\begin{aligned}
\bWh & = \{\bw_h :~\bw_h=\Psi_h^W \tbw_h \text{ with } \tbw_h \in\tbW_h\},  \\
\bPhi_h & = \{\bphi_h:~\bphi_h = \Psi_h^\Phi \tbphi_h \text{ with }  \tbphi_h \in\tbPhi_h\}.
\end{aligned}   
\end{equation}
The following theorem provides properties of the local operator $\Psi^W_T$, which can be obtained by combining Lemma \ref{lem:uniqueXT} and~\cite[Theorem 3.7]{neilan2021divergence}.

\begin{theorem}
\label{Psitbvandbv}
The following properties of the local operator $\Psi^W_T$ hold:
\begin{enumerate}
    \item[(a)] If $F_T$ is affine, then $(\Psi^W_T\tbv)(\bx)=\tbv(\tbx)$ and $\Psi^W_T$ is the identity operator.
    \item[(b)] If $e\subset\partial T$ is a straight edge, so that $e\subset\partial\tT$, then
    $(\Psi^W_T\tbv)\cdot\bn_e|_e=\tbv\cdot\bn_e|_e$.
    \item[(c)] There holds $\|\Psi^W_T\tbv\|_{H^1(T)}\leq C\|\tbv\|_{H^1(\tT)}.$
\end{enumerate}
\end{theorem}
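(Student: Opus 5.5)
The plan is to write everything through the reference element. For $\tbv\in\tbW(\tT)$ I set $\hbv=\tbv\circ F_{\tT}$, and for $\bw=\Psi^W_T\tbv\in\bW(T)$ I write $\bw(\bx)=A_T(\hx)\hbw(\hx)$ with $\hbw\in\bP_2(\hT)$. The nodal condition defining $\Psi^W_T$, together with Lemma~\ref{lem:uniqueXT}, gives
\[
\hbw(\hat a)=A_T^{-1}(\hat a)\,\hbv(\hat a) \quad \text{for all } \hat a\in\hNT .
\]
So $\hbw$ is the quadratic Lagrange interpolant on $\hT$ of the smooth field $A_T^{-1}\hbv$. I will use this representation for all three parts.

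\emph{Part (a).} If $F_T$ is affine, then $F_T=F_{\tT}$ because the two maps agree at the vertices. Hence $T=\tT$, $G_h$ is the identity on $\tT$, and $A_T=DF_{\tT}/\det DF_{\tT}$ is constant. Therefore $\bW(T)=\tbW(\tT)$, and $\tbv$ itself lies in $\bW(T)$ and matches the nodal values. Uniqueness from Lemma~\ref{lem:uniqueXT} then gives $\Psi^W_T\tbv=\tbv$.

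\emph{Part (b).} Let $e=F_T(\hat e)$ be a straight edge, so that $F_T|_{\hat e}$ is affine and coincides with $F_{\tT}|_{\hat e}$. I use the standard Piola identity $\bw\cdot\bn_e\,|e| = \hbw\cdot\hbn_{\hat e}\,|\hat e|$ pointwise on $e$, which holds because the tangential Jacobian along $e$ is constant; this is the content of \cite[Lemma 3.1]{neilan2021divergence}. By this identity, $\bw\cdot\bn_e$ is the quadratic polynomial obtained by scaling $\hbw\cdot\hbn_{\hat e}$. The same scaling applied to the affine map $F_{\tT}$ shows that $\tbv\cdot\bn_e$ corresponds to $(A_{\tT}^{-1}\hbv)\cdot\hbn_{\hat e}$ up to the same factor. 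At the three nodes on $\hat e$ the two quadratics agree, because $\hbw(\hat a)=A_T^{-1}(\hat a)\hbv(\hat a)$ and the normal components of $A_T^{-1}$ and $A_{\tT}^{-1}$ coincide on $\hat e$, both being determined by the common tangent. Two quadratics on an edge that agree at three points are equal, which proves (b).

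\emph{Part (c).} This is the main estimate. Lemma~\ref{lem::normH1XThXT} gives
\[
\|\bw\|_{H^1(T)}\le C\bigl(h_T^{-1}|\hbw|_{H^1(\hT)}+\|\hbw\|_{L^2(\hT)}\bigr),
\]
after collecting the powers of $h_T$; the $L^2$ part carries the factor $h_T^{-1}$, which multiplies the $O(h_T)$ bound below. Since $\hbw$ is the quadratic interpolant of $A_T^{-1}\hbv$, and $A_T^{-1}\hbv$ is smooth on the compact reference element, I bound
\[
\|\hbw\|_{H^1(\hT)}\le C\sum_{\hat a\in\hNT}|A_T^{-1}(\hat a)\hbv(\hat a)|\le Ch_T\|\hbv\|_{L^\infty(\hT)}\le Ch_T\|\hbv\|_{L^2(\hT)}
\]
using $|A_T^{-1}|_{L^\infty}\le Ch_T$ from \eqref{ATBounds} and equivalence of norms on $\bP_2(\hT)$. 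This yields $\|\bw\|_{L^2(T)}\le C\|\tbv\|_{L^2(\tT)}$ after the affine scaling back to $\tT$. That scaling costs $h_T$, so the $\hbv$ term contributes $Ch_T$ before the $h_T^{-1}$ factor is applied.

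The seminorm is the delicate point, because a crude bound only gives $h_T^{-1}\|\tbv\|_{L^2}$. I will subtract the affine comparison: let $\hbw_0$ be the interpolant of $A_{\tT}^{-1}\hbv$, whose Piola image under the affine map is exactly $\tbv$. Then $\hbw-\hbw_0$ interpolates $(A_T^{-1}-A_{\tT}^{-1})\hbv$. From $|F_T-F_{\tT}|_{W^{1,\infty}}\le Ch_T^2$, which holds because $G_h$ is an $O(h_T)$ perturbation of the identity in $W^{1,\infty}$ scaled, the difference satisfies $\|A_T^{-1}-A_{\tT}^{-1}\|_{L^\infty}\le Ch_T^2$. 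With norm equivalence this gives
\[
|\hbw-\hbw_0|_{H^1(\hT)}\le Ch_T^2\|\hbv\|_{L^2(\hT)}\le Ch_T^2|\hbv|\text{-scaled}\le Ch_T\|\tbv\|_{L^2(\tT)}.
\]
Mapping back through Lemma~\ref{lem::normH1XThXT}, and comparing $A_T\hbw_0$ with $A_{\tT}\hbw_0=\tbv$ in the same way, yields $|\bw|_{H^1(T)}\le C\|\tbv\|_{H^1(\tT)}$.

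I expect this last comparison, controlling $A_T-A_{\tT}$ and its derivative by $O(h_T^{0})$ relative to the $h_T^{-1}$ scaling, to be the main obstacle. It mirrors \cite[Theorem 3.7]{neilan2021divergence}.
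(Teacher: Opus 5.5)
Your proof is correct. Parts (a) and (b) are the standard arguments behind \cite[Theorem 3.7]{neilan2021divergence}, and that citation, together with Lemma~\ref{lem:uniqueXT}, is all the paper offers as proof. In (b), the key fact you use is right: $\hbn_{\hat e}^{\intercal}A_T^{-1}=\hbn_{\hat e}^{\intercal}\operatorname{adj}(DF_T)$ depends only on $DF_T\hat{t}$, and this agrees with $DF_{\tT}\hat{t}$ on a straight edge.

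For (c), your route differs from the one the paper points to. The appeal to Lemma~\ref{lem:uniqueXT} suggests the argument $\Psi^W_T\tbv=\bIT(\boldsymbol{\Upsilon}_T\tbv)$; the nodal values coincide because $\NT=G_h(\tNT)$. This gives $\|\Psi^W_T\tbv\|_{H^1(T)}\le\|\boldsymbol{\Upsilon}_T\tbv\|_{H^1(T)}+Ch_T^2\|\boldsymbol{\Upsilon}_T\tbv\|_{H^3(T)}$. Lemma~\ref{lem::NormInThT}, with $|\hbv|_{H^3(\hT)}=0$ and norm equivalence on $\bP_2(\hT)$, then bounds $h_T^2\|\boldsymbol{\Upsilon}_T\tbv\|_{H^3(T)}$ by $C\|\tbv\|_{H^1(\tT)}$. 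That route reuses the interpolation estimate the paper needs anyway, but it rests on the $H^3$ interpolation machinery, which the paper itself only cites. Yours is self-contained on $\hT$. It needs only $|F_T-F_{\tT}|_{W^{1,\infty}(\hT)}\le C|F_T|_{W^{2,\infty}(\hT)}\le Ch_T^2$, which holds because $F_{\tT}$ is the linear interpolant of $F_T$, and the linearity of $A_T^{-1}=\operatorname{adj}(DF_T)$ in $DF_T$. It also gives $L^2$-stability as a by-product.

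The step you flag as the main obstacle is in fact unnecessary. Once you have $|\hbw-\hbw_0|_{H^1(\hT)}\le Ch_T\|\tbv\|_{L^2(\tT)}$, use $|\hbw_0|_{H^1(\hT)}=|A_{\tT}^{-1}\hbv|_{H^1(\hT)}\le Ch_T|\hbv|_{H^1(\hT)}\le Ch_T|\tbv|_{H^1(\tT)}$. Then $h_T^{-1}|\hbw|_{H^1(\hT)}\le C\|\tbv\|_{H^1(\tT)}$, and the bound from Lemma~\ref{lem::normH1XThXT} finishes the proof. Note that in that bound the factor $h_T^{-1}$ multiplies $|\hbw|_{H^1(\hT)}$, not the $L^2$ term, contrary to what your text says. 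This way you never have to compare $A_T\hbw_0\circ F_T^{-1}$ with $\tbv$ on different domains.
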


Based on the global operators $\Psi_h^W$, $\Psi_h^\Phi$, and the relation in \eqref{eq:relations}, define the operator $\bE^W_h:~\bWh \to \bVh^{iso}$ as $
(\bE^W_h \bw_h)(\bx) = (\Psi_h^W)^{-1} \bw_h(\tbx)$ with $\bx = G_h(\tbx)$, and the operator $\bE^\Phi_h:~\bPhi_h \to \bVh^{iso}$ as $(\bE^\Phi_h \bphi_h)(\bx) = (\Psi_h^\Phi)^{-1} \bphi_h(\tbx)$ with $\bx = G_h(\tbx)$. Then, for any $\bv_h = \bw_h + \bphi_h \in \bVh$ with $\bw_h \in \bWh$ and $\bphi_h \in \bPhi_h$, 
define the operator $\bE_h:~\bVh \to \bVh^{iso}$ as 
\begin{equation}
\label{def:Eh}
\bE_h \bv_h = \bE^W_h \bw_h + \bE^\Phi_h \bphi_h. 
\end{equation}
By Theorem \ref{the::VhEqXhDirSumPhih}, the operator $\bE_h$ is well-defined since $\bw_h$ and $\bphi_h$ are unique. 
The approximation property of $\bE_h$ can be derived as follows.

\begin{lemma}
\label{lem::EhError}
For all $\bv_h \in\bVh$ and $T\in\calTh$, it holds that
\begin{equation}
\label{ErrorEh}
\|\bv_h -\Eh\bv_h\|_{L^2(T)}+h_T\|\nabla(\bv_h-\Eh\bv_h)\|_{L^2(T)}\leq Ch_T^2\|\nabla\bv_h\|_{L^2(T)}.
\end{equation}
\end{lemma}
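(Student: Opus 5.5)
We prove the estimate elementwise. First we split $\bv_h-\Eh\bv_h$ through the direct sum of Theorem~\ref{the::VhEqXhDirSumPhih}, then we estimate each piece in the reference configuration, and finally we return to the full $\nabla\bv_h$ using the stable decomposition of Lemma~\ref{lem:VTestimate1}.

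\emph{Splitting and the trivial case.} Fix $T\in\calTh$ and write $\bv_h|_T=\bw_T+\bphi_T$. By \eqref{def:Eh},
\[
\bv_h-\Eh\bv_h=(\bw_T-\bE^W_h\bw_T)+(\bphi_T-\bE^\Phi_h\bphi_T).
\]
If $F_T$ is affine, then Theorem~\ref{Psitbvandbv}(a) and the analogous statement for $\Psi^\Phi_T$ show that both differences vanish. So only elements with a curved edge need work.

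\emph{Reference-element identities.} Pull everything back to $\hT$ via $F_T$. With $\bw_T=A_T\hbw$, the function $\tbw=(\Psi^W_T)^{-1}\bw_T$ is the affine pull-forward of some $\hbw'\in\bP_2(\hT)$. The conditions $\tbw(\tba)=\bw_T(\ba)$ give
\[
\hbw'(\hat a)=A_T(\hat a)\hbw(\hat a) \quad\text{at the six Lagrange nodes}.
\]
Hence $\bE^W_h\bw_T\circ F_T=\hat I(A_T\hbw)$, where $\hat I$ is quadratic Lagrange interpolation on $\hT$, and therefore
\[
(\bw_T-\bE^W_h\bw_T)\circ F_T=(\mathrm{Id}-\hat I)(A_T\hbw).
\]
Since $\hat I$ reproduces $\bP_2$, this is bounded in $H^1(\hT)$ by $C|A_T\hbw|_{H^3(\hT)}$. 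By Leibniz and \eqref{ATBounds}, every term in $|A_T\hbw|_{H^3(\hT)}$ carries at least one derivative of $A_T$, because $\hbw$ is quadratic. Write $A_T=A_{\tT}+(A_T-A_{\tT})$ with $A_{\tT}$ the constant affine part, so that $A_T-A_{\tT}$ is $O(h_T^{0})$, one order smaller than $|A_T|_{L^\infty}\sim h_T^{-1}$. This gives
\[
|(\mathrm{Id}-\hat I)(A_T\hbw)|_{H^1(\hT)}\le Ch_T^{0}\|\hbw\|_{L^2(\hT)}\le Ch_T^{0}\,h_T\,h_T^{-1}\|\bw_T\|_{L^2(T)},
\]
the last step by Lemma~\ref{lem::normH1XThXT}. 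Mapping back with Lemma~\ref{lem::NormInThT} (the difference is not Piola-mapped, only composed with $F_T$) yields
\[
\|\bw_T-\bE^W_h\bw_T\|_{L^2(T)}+h_T\|\nabla(\bw_T-\bE^W_h\bw_T)\|_{L^2(T)}\le Ch_T\|\bw_T\|_{L^2(T)}.
\]
For $\bphi_T=A_T\hbphi$ with $\hbphi=\phi_{\hT}\hat c$, the preimage $(\Psi^\Phi_T)^{-1}\bphi_T$ corresponds to $\phi_{\hT}A_T(\hat b)\hat c$, where $\hat b$ is the barycenter. The difference is $\phi_{\hT}(A_T-A_T(\hat b))\hat c$, and since $|A_T|_{W^{1,\infty}(\hT)}\le C$ it obeys the same bound with $\bphi_T$ in place of $\bw_T$.

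\emph{Returning to $\nabla\bv_h$.} Adding the two bounds leaves
\[
Ch_T\big(\|\bw_T\|_{L^2(T)}+\|\bphi_T\|_{L^2(T)}\big),
\]
and it remains to bound this by $Ch_T^2\|\nabla\bv_h\|_{L^2(T)}$. Here Lemma~\ref{lem:VTestimate1} applies only if $\bw_T$ vanishes somewhere, which need not hold for a general $\bv_h$. This is the main obstacle. The remedy is to subtract a constant: the estimate on the left is invariant under adding to $\bv_h|_T$ a function $\bc\in\bW(T)$ with $\bE^W_h\bc$ exact. Since $\bE^W_h$ acts linearly and only the nonconstant part of $A_T$ produces error, I would replace $\hbw$ by $\hbw-\hbw(\hat a_1)$. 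The error term then involves only $\hbw-\hbw(\hat a_1)$, bounded by $|\hbw|_{H^1(\hT)}$, together with the constant $\hbw(\hat a_1)$ multiplied by second derivatives of $A_T$, which cost $h_T^{1}$ and hence an extra power of $h$.

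Concretely, I would apply Lemma~\ref{lem:VTestimate1} to
\[
\bv_T-A_T\hbw(\hat a_1),
\]
whose $\bW$-part vanishes at $\ba_1$. Inequality \eqref{bvcbvbL2Bd} then controls the components of this shifted function by its $L^2(T)$ norm. That norm is bounded by $Ch_T\|\nabla\bv_T\|_{L^2(T)}$ by a Poincaré-type argument on the reference element, using $\nabla(A_T\hat c)=O(h_T^{-1})\,|\hat c|$ with a lower-order correction. The delicate bookkeeping of these $h_T$ powers, separating the constant mode from the rest, is where I expect the difficulty to lie.
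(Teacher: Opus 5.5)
There is a genuine gap, and it is in the last step. Your reduction is sound up to the bound $Ch_T(\|\bw_T\|_{L^2(T)}+\|\bphi_T\|_{L^2(T)})$; it reproduces \eqref{InterrATVC} and \eqref{InterrATVB}. The last step, however, rests on a false premise.

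You claim that $\bw_T$ need not vanish anywhere. On the only elements that need work, it does:
\begin{itemize}
\item Interior edges are straight, and $F_T$ is affine on straight edges. So an element with no edge on $\partial\Omega_h$ has all six nodes affinely placed and is itself affine.
\item Hence every curved $T$ has a boundary edge. By the definition of $\bWh$, $\bw_h$ vanishes at that edge's two endpoints and midpoint.
\end{itemize}
Both parts of Lemma~\ref{lem:VTestimate1} then apply directly. They give $\|\bw_T\|_{L^2(T)}+\|\bphi_T\|_{L^2(T)}\le C\|\bv_h\|_{L^2(T)}\le Ch_T\|\nabla\bv_h\|_{L^2(T)}$, which closes the proof. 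This is exactly the paper's argument.

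Your substitute argument cannot work, because it uses only information on $T$, and the local splitting is not determined by $\bv_T$: $\hbPhi(\hT)\subset\bP_2(\hT)=\hbW(\hT)$. Take $\bw_T=-A_T\phi_{\hT}\hat{d}$ and $\bphi_T=A_T\phi_{\hT}\hat{d}$ with a constant vector $\hat{d}\neq 0$. Then:
\begin{itemize}
\item $\bv_T=0$.
\item $\phi_{\hT}(\hat{a}_1)=-1$ gives $\hbw(\hat{a}_1)=\hat{d}$.
\item So $\bv_T-A_T\hbw(\hat{a}_1)=-A_T\hat{d}\neq 0$, although $\nabla\bv_T=0$. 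Your Poincar\'e-type bound is therefore false.
\end{itemize}
Even the target estimate fails for this pair. The pull-back of $\bv_T-\bE^W_h\bw_T-\bE^\Phi_h\bphi_T$ by $F_T$ takes the value $(A_T(\hat{b})-A_T(\hat{a}_1))\hat{d}$ at $\hat{a}_1$. This is in general nonzero when $T$ is curved. What rules this configuration out is precisely that $\bw_T$ must vanish at the three nodes of a boundary edge; here it does not, since $\phi_{\hT}$ equals $-1$ at vertices and $1/2$ at edge midpoints.

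Two further problems remain in your remedy. First, the claimed invariance under adding $A_T\hat{c}$ is not available: $A_T$ is rational in $\hat{x}$, so $\bE^W_h$ does not reproduce it. Second, the leftover point value $|\hbw(\hat{a}_1)|$ is never bounded by $\|\nabla\bv_h\|_{L^2(T)}$.
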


\begin{proof}
A triangle inequality gives
\begin{equation}
\label{TriEhbv}
\|\bv_h -\Eh \bv_h \|_{H^m(T)}\leq \|\bw_h-\bE^W_h \bw_h\|_{H^m(T)}+\|\bphi_h-\bE^\Phi_h \bphi_h\|_{H^m(T)}~~ m=0,1.    
\end{equation}
For $T\in\calTh$ an affine (noncurved) triangle, $\bE^W_h\bw_h$ and $\bE^\Phi_h\bphi_h$ are piecewise quadratic polynomials, and 
\eqref{ErrorEh} holds trivially in this case. For $T\in\calTh$ with curved boundary, following similar procedures as in~\cite[Lemma 4.5]{neilan2021divergence}, one can obtain 
\begin{equation}\label{InterrATVC}
\|\bw_h-\bE^W_h\bw_h\|_{H^m(T)} \leq C h_T^{1-m}\|\bw_h \|_{L^2(T)} ~~ m=0,1.    
\end{equation}
By definitions, there exist two functions $\hbphi_{\hT}$ and $\hbphi^{\prime}_{\hT}$ in $\hbPhi(\hT)$ such that  $\bphi_h|_T(\bx) = A_T(\hat{x})\hbphi_{\hT}(\hat{x})$ and $(\bE^\Phi_h\bphi_h)|_T(\bx) =\hbphi^{\prime}_{\hT}(\hat{x})$ with $x=F_T(\hx)$, respectively. Lemma \ref{lem::NormInThT} gives
\begin{equation}
\label{InterrATVB0}
\|\bphi_h-\bE^\Phi_h\bphi_h\|_{H^m(T)}\leq Ch_T^{1-m}\|A_T\hbphi_{\hT}-\hbphi_{\hT}^{\prime}\|_{H^m(\hT)}.
\end{equation}
It follows from the definition of $\bE^\Phi_h$ that
$A_T(\hat{b})\hbphi_{\hT}(\hat{b})=\hbphi^\prime_{\hT}(\hat{b})$, where $\hat{b}$ is the barycenter of~$\hT$. This combined with the definition of $\hbPhi(\hT)$ shows $\hbphi^\prime_{\hT}(\hat{x})=A_T(\hat{b})\hbphi_{\hT}(\hat{x})$ and 
\begin{equation}\label{ATvbEst}
|A_T\hbphi_{\hT}-\hbphi^\prime_{\hT}|_{H^m(\hT)}\leq \sum_{r=0}^m|A_T(\hat{x})-A_T(\hat{b})|_{W^{r,\infty}(\hT)}|\hbphi_{\hT}|_{H^{m-r}(\hT)}.    
\end{equation}
The Taylor expansion with $\hat{\xi} \in\hT$ gives 
$
|A_T(\hat{x})-A_T(\hat{b})| = |DA_T(\hat{\xi})(\hat{x}-\hat{b})|\leq C|A_T|_{W^{1,\infty}(\hT)}|\hat{x}-\hat{b}|.
$
The estimate for the norm of $A_T$ in \eqref{ATBounds} shows $|A_T(\hat{x})-A_T(\hat{b})|_{W^{r,\infty}(\hT)}\leq C$ for~$r=0,1$. Substituting this into \eqref{ATvbEst} and employing the equivalence of norms in a finite-dimensional setting show
$
|A_T\hbphi_{\hT}-\hbphi^\prime_{\hT}|_{H^m(\hT)}\leq C\|\hbphi_{\hT}\|_{H^m(\hT)}\leq C\|\hbphi_{\hT}\|_{L^2(\hT)}.
$
This, \eqref{InterrATVB0}, and Lemma~\ref{lem::normH1XThXT} lead to
\begin{equation}\label{InterrATVB}
\begin{aligned}
\|\bphi_h-\bE^\Phi_h\bphi_h\|_{H^m(T)}\leq C h_T^{1-m}\|\hbphi_{\hT}\|_{L^2(\hT)}\leq Ch_T^{1-m}\|\bphi_h\|_{L^2(T)} ~~m=0,1.
\end{aligned}
\end{equation}
Since $\bw_h|_T$ vanishes on $\partial T\cap\partial\Omega_h$, the proof of \eqref{ErrorEh} is completed by integrating Lemma \ref{lem:VTestimate1} with \eqref{TriEhbv}, \eqref{InterrATVC} and~\eqref{InterrATVB}.
\end{proof}

\begin{lemma}\label{col::H1Norm}
It holds that $\|\bv_h \|_{L^2(\Omega_h)}\leq C\|\nabla_h \bv_h\|_{L^2(\Omega_h)}$ for any $\bv_h \in \bVISO$ or $\bv_h \in\bVh$. 
\end{lemma}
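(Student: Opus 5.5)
Both cases rest on a discrete Poincaré--Friedrichs inequality for nonconforming spaces with weak continuity of order zero across edges. For $\bv_h\in\bVISO$, I would transfer to the affine mesh: $\bv_h=\boldsymbol{\Upsilon}_h\tbv_h$ with $\tbv_h\in\tbV_h$. Then Lemma~\ref{lem::NormInThT}, applied to $G_h|_{\tT}$ with $\|DG_h\|_{W^{1,\infty}}$ and $\|DG_h^{-1}\|_{W^{1,\infty}}$ bounded, gives $\|\bv_h\|_{L^2(T)}\simeq\|\tbv_h\|_{L^2(\tT)}$ and $\|\nabla\bv_h\|_{L^2(T)}\simeq\|\nabla\tbv_h\|_{L^2(\tT)}$. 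Summing over elements, the claim reduces to $\|\tbv_h\|_{L^2(\tOmega_h)}\le C\|\tnabla_h\tbv_h\|_{L^2(\tOmega_h)}$ on $\tbV_h$.

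This affine inequality is the standard Brenner-type discrete Friedrichs inequality for piecewise $H^1$ functions. Its hypothesis is that on every interior edge the jump has vanishing mean, and on every boundary edge the trace has vanishing mean. Functions in $\tbV_h$ are piecewise quadratic and are continuous (zero) at the two Gauss--Legendre points of each edge. Two-point Gauss quadrature is exact for quadratics on an edge, so $\int_e[\tbv_h]\ds=0$. The constant $C$ depends only on shape regularity and $\tOmega_h$, and $\tOmega_h$ is uniformly contained in a fixed ball, so $C$ is uniform in $h$. I would cite this rather than reprove it; alternatively, a duality argument with $\bz\in\bH^1$ solving $\Div\bz=\tbv_h$ plus edge-mean jumps works.

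For $\bv_h\in\bVh$ I would not try to verify edge-mean jump conditions directly. Remark~\ref{remark:Vh} warns that these fail on curved interior edges. Instead I would use $\bE_h$ from \eqref{def:Eh}, which maps into $\bVISO$. By the triangle inequality,
\begin{equation*}
\|\bv_h\|_{L^2(\Omega_h)}\le\|\bv_h-\bE_h\bv_h\|_{L^2(\Omega_h)}+\|\bE_h\bv_h\|_{L^2(\Omega_h)}.
\end{equation*}
Lemma~\ref{lem::EhError} bounds the first term by $Ch^2\|\nabla_h\bv_h\|_{L^2(\Omega_h)}$. The first case bounds the second term by $C\|\nabla_h\bE_h\bv_h\|_{L^2(\Omega_h)}$, and Lemma~\ref{lem::EhError} again gives $\|\nabla_h\bE_h\bv_h\|\le\|\nabla_h\bv_h\|+Ch\|\nabla_h\bv_h\|$. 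Combining the two yields the claim with a constant uniform for $h\le h_0$.

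The main obstacle is the first case, namely making sure the affine discrete Friedrichs inequality applies with a constant independent of $h$ even though the domain $\tOmega_h$ varies. I would handle this by noting that the Brenner proof depends only on shape regularity and a bounded containing domain. For the second case, any $h$-dependence for small $h$ is harmless, since Lemma~\ref{lem::EhError} is purely local.
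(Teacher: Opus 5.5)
Your proposal is correct and is essentially the paper's proof. For $\bVISO$ both use a Brenner-type discrete Poincar\'e--Friedrichs inequality, and for $\bVh$ both use the triangle inequality with $\bE_h\bv_h\in\bVISO$ and apply Lemma~\ref{lem::EhError} twice. The only difference is in the $\bVISO$ step: you first pull back to $\tbV_h$ on the affine mesh, where Gauss--Legendre continuity gives exactly zero-mean jumps and Brenner's polygonal-domain result applies directly with an $h$-uniform constant, whereas the paper applies it on the curved mesh (where, on curved boundary edges, vanishing at the two mapped points does not make the edge mean exactly zero), so your version of that step is slightly more careful.
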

\begin{proof}
Since any functions in $\bVISO$ are continuous at Gauss--Legendre points on all interior edges and vanish at Gauss--Legendre points on all boundary edges, discrete Poincar\'e--Friedrich's inequalities follow from the analysis in~\cite{brenner2003poincare}, as shown below
$$
\|\bv_h\|_{L^2(\Omega_h)}\leq C\|\nabla_h\bv_h\|_{L^2(\Omega_h)}  \text{ for all } \bv_h\in\bVISO.
$$
For any  $\bv_h \in \bVh$, a triangle inequality, this and Lemma \ref{lem::EhError} show
$$
\begin{aligned}
\|\bv_h\|_{L^2(\Omega_h)}&\leq \|\bv_h-\Eh\bv_h\|_{L^2(\Omega_h)}+\|\Eh\bv_h\|_{L^2(\Omega_h)}\\
&\leq \|\bv_h-\Eh\bv_h\|_{L^2(\Omega_h)}+C\|\nabla_h(\Eh\bv_h)\|_{L^2(\Omega_h)}\\
&\leq Ch^2\|\nabla_h\bv_h\|_{L^2(\Omega_h)}+C(\|\nabla_h(\Eh\bv_h-\bv_h)\|_{L^2(\Omega_h)}+\|\nabla_h\bv_h\|_{L^2(\Omega_h)})\\
&\leq C(1+h+h^2)\|\nabla_h\bv_h\|_{L^2(\Omega_h)},
\end{aligned}
$$
which completes the proof.   
\end{proof}

\subsection{The finite element method.}

Define the local space of pressure on each element $T \in \calTh$ as
\begin{equation}
\label{defQTloc}
\begin{aligned}
Q(T) & \coloneqq \{q_T:~ q_T(\bx) = {\hq_{\hT}(\hat{x})} \text{ with } \hq_{\hT} \in \hat Q(\hT) \text{ and }x = F_{T}(\hat{x}) \},   
\end{aligned}     
\end{equation}
and the global space  on $\calTh$ as
\begin{equation}
\label{defQinterpolate}
Q_h  \coloneqq \{q_h \in L^2(\Omega_h):~ q_h=\Upsilon_h \tq_h \text{ with } \tq_h \in\tQ_h\},    
\end{equation}
where $\Upsilon_h$ denotes the scalar counterpart of $\boldsymbol{\Upsilon}_h$ introduced in \eqref{def:bUpsilon}.

\begin{remark}
\label{re:qh}
Note that $\int_{\Omega_h} q_h \dx \neq 0$ for any $q_h \in Q_h$. In fact, by~\cite[Theorem 4.2]{neilan2023a}, $q_h \in Q_h$ satisfies
\begin{equation}
\label{re-eq:qh}
\begin{aligned}
& \sum_{T\in\calTh} \int_T\frac{2q_h|\tT|}{\det(DF_T\circ F_T^{-1})}\dx = \sum_{T\in \calTh} 2|\tT| \int_{\hT} q_h \circ F_{T} \rd \hat{x} \\
&\qquad  = \sum_{\tT \in \tcalTh} 2|\tT| \int_{\hT} \tq_h \circ F_{\tT} \rd \hat{x} = \sum_{\tT \in \tcalTh} \int_{\tT} \tq_h \rd \tbx= \int_{\tOmega_h} \tq_h \rd \tbx =0.    
\end{aligned}    
\end{equation}
\end{remark}

Recall the Stokes equation \eqref{eq::Stokes} defined on $\Omega$. 
Assume that $\partial \Omega$ and ${\bft}$ are sufficiently smooth
such that $(\bu,p)\in \bH^3(\Omega)\times H^2(\Omega)$,
and can be extended to $\mathbb{R}^2$ in a way such that
$(\bu,p)\in \bH^3(\mathbb{R}^2)\times H^2(\mathbb{R}^2)$ with $\Div \bu=0$ and
\begin{equation}
\label{extenUandP}
\|\bu\|_{H^3(\mathbb{R}^2)}\le C \|\bu\|_{H^3(\Omega)},\qquad \|p\|_{H^2(\mathbb{R}^2)}\le C \|p\|_{H^2(\Omega)}.    
\end{equation}
Then, extend ${\bft}$ by ${\bft} = -\nu \Delta \bu+ \nabla p$,
so that ${\bft}\in \bH^1(\mathbb{R}^2)$. Denote by ${\bft}_h\in \bL^2(\Omega_h)$ a computable approximation of ${\bft}|_{\Omega}$.

 Then the finite element method reads: Find $(\bu_h,p_h)\in \bVh\times Q_h$ such that
\begin{subequations}
\label{eqn:FEM}
\begin{alignat}{2}
\label{eqn:FEM1}
\int_{\Omega_h} \nu \nabla_h \bu_h: \nabla_h \bv_h \rd x - \int_{\Omega_h} p_h  \Div_h \bv_h \rd x & = \int_{\Omega_h} {\bft}_h\cdot \bv_h \rd x  &,\\
\label{eqn:FEM2}
-\int_{\Omega_h} q_h \Div_h \bu_h  \rd x & =0 &
\end{alignat}
\end{subequations}
for all $(\bv_h,q_h) \in \bVh \times Q_h$.

\subsection{Inf-sup stability.}
This subsection establishes the well-posedness of~\eqref{eqn:FEM} by using a two-step method~\cite{stenberg1984analysis,hu2015finite,hu2015a} to prove the inf-sup condition of the finite element pair $\bVh\times Q_h$.

Introduce two subspaces of $\hQ(\hT)$ and $Q(T)$ as
\begin{equation}
\label{defQ0T}
\begin{aligned}
\hQ_0(\hT) & \coloneqq \{ \hat{q}_{\hT}:~ \hat{q}_{\hT} \in \hQ(\hT) \text{ with } \int_{\hT} \hat{q}_{\hT} \mathrm{d} \hat{x} =0\}, \\
Q_0(T) & \coloneqq \{ q_T :~ q_T(\bx) = \hat{q}_{\hT}(\hat{x}) \text{ with }  \hat{q}_{\hT} \in \hQ_0(\hT) \text{ and }x = F_T(\hat{x})\}.
\end{aligned}    
\end{equation}
It follows from the definition of $\hbPhi(\hT)$ in \eqref{def:hbPhi} that
\begin{equation}\label{eq::hatDiv1}
\Hat{\Div}\hbPhi(\hT)=\operatorname{span}\big\{3(1-4\hat{x}_1-2\hat{x}_2),~ 3(1-2\hat{x}_1-4\hat{x}_2)\big\}=\hQ_0(\hT).    
\end{equation}
This shows that $\|\Hat{\Div}\hbphi_{\hT}\|_{L^2(\hT)}$ defines a norm for any $\hbphi_{\hT} \in \hbPhi(\hT)$. By equivalence of norms, one can obtain
\begin{equation}\label{eq::hatDiv2}
\|\hbphi_{\hT}\|_{H^1(\hT)}\leq C\|\Hat{\Div}\hbphi_{\hT}\|_{L^2(\hT)} \text{ for all } \hbphi_{\hT}\in\hbPhi(\hT).
\end{equation}
A combination of \eqref{eq::hatDiv1} and \eqref{eq::hatDiv2} reveals the relationship between the spaces $\bPhi(T)$ and $Q_0(T)$ as follows.
\begin{lemma}
\label{BubbleProperty} 
For any $q_T \in Q_0(T)$, there exists a $\bphi_T\in \bPhi(T)$ such that 
$$\Div \bphi_T =\frac{1}{\det(DF_T \circ F_T^{-1} )}q_T \text{ and } \| \nabla \bphi_T \|_{L^2(T)} \leq C \big\| \frac{1}{\det(DF_T \circ F_T^{-1} )}q_T \big\|_{L^2(T)}.$$
\end{lemma}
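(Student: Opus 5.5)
The plan is to build the bubble on the reference triangle and transport it with the Piola map. Given $q_T\in Q_0(T)$, write $q_T(\bx)=\hq_{\hT}(\hx)$ with $\hq_{\hT}\in\hQ_0(\hT)$ and $x=F_T(\hx)$. By \eqref{eq::hatDiv1}, $\Hat{\Div}$ maps $\hbPhi(\hT)$ onto $\hQ_0(\hT)$. Both spaces have dimension two, so this map is a bijection, and there is a unique $\hbphi_{\hT}\in\hbPhi(\hT)$ with $\Hat{\Div}\hbphi_{\hT}=\hq_{\hT}$. We then set $\bphi_T(\bx)=A_T(\hx)\hbphi_{\hT}(\hx)$, which lies in $\bPhi(T)$ by \eqref{def:WandPhi}. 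The Piola identity \eqref{pro-AT:div} gives
$$\Div\bphi_T(\bx)=\frac{1}{\det(DF_T(\hx))}\Hat{\Div}\hbphi_{\hT}(\hx)=\frac{1}{\det(DF_T\circ F_T^{-1})(\bx)}\,q_T(\bx).$$
This is the first claim.

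For the bound we combine the scaling results with norm equivalence. Lemma~\ref{lem::normH1XThXT} with $m=1$, $p=2$ gives
$$\|\nabla\bphi_T\|_{L^2(T)}\le Ch_T^{-1}\big(h_T|\hbphi_{\hT}|_{L^2(\hT)}+|\hbphi_{\hT}|_{H^1(\hT)}\big)\le Ch_T^{-1}\|\hbphi_{\hT}\|_{H^1(\hT)}.$$
Then \eqref{eq::hatDiv2} gives $\|\hbphi_{\hT}\|_{H^1(\hT)}\le C\|\Hat{\Div}\hbphi_{\hT}\|_{L^2(\hT)}=C\|\hq_{\hT}\|_{L^2(\hT)}$.

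It remains to show $\|\hq_{\hT}\|_{L^2(\hT)}\le Ch_T\|q_T/\det(DF_T\circ F_T^{-1})\|_{L^2(T)}$, and this is where the powers of $h_T$ must be tracked carefully. Changing variables,
$$\Big\|\frac{q_T}{\det(DF_T\circ F_T^{-1})}\Big\|_{L^2(T)}^2=\int_{\hT}\frac{\hq_{\hT}^2}{\det(DF_T)^2}\det(DF_T)\,\rd\hx=\int_{\hT}\frac{\hq_{\hT}^2}{\det DF_T}\,\rd\hx\ge Ch_T^{-2}\|\hq_{\hT}\|_{L^2(\hT)}^2,$$
using the upper bound $\det(DF_T)\le Ch_T^2$ from \eqref{Prop-FT:eq2}. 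Taking square roots gives $\|\hq_{\hT}\|_{L^2(\hT)}\le Ch_T\|q_T/\det(DF_T\circ F_T^{-1})\|_{L^2(T)}$, and chaining the three inequalities proves the estimate. The only delicate point is this $h_T$ bookkeeping: the Piola scaling contributes $h_T^{-1}$ and the determinant weighting contributes $h_T$, and these must cancel. Apart from that, the argument rests entirely on the finite-dimensional bijection in \eqref{eq::hatDiv1}.
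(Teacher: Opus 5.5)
Your proposal is correct and follows the paper's proof essentially step for step: you construct $\hbphi_{\hT}$ on $\hT$ via \eqref{eq::hatDiv1}--\eqref{eq::hatDiv2}, transport it by the Piola map, and use \eqref{pro-AT:div} and Lemma~\ref{lem::normH1XThXT} to get the divergence identity and the $Ch_T^{-1}\|\hbphi_{\hT}\|_{H^1(\hT)}$ bound. The only difference is cosmetic and lies in the last step: you bound $\|\hq_{\hT}\|_{L^2(\hT)}$ by a direct change of variables in the weighted norm, while the paper first passes to $Ch_T^{-2}\|q_T\|_{L^2(T)}$ via Lemma~\ref{lem::NormInThT} and then uses the upper bound in \eqref{Prop-FT:eq2}; the two arguments are equivalent.
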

\begin{proof}
    Given $q_T\in Q_0(T)$, it follows from the definition of $Q_0(T)$ that there exists a $\hq_{\hT}\in\hQ_0(\hT)$ such that $q_T(x)=\hq_{\hT}(\hat{x})$ with $x=F_T(\hat{x})$.
    According to~\eqref{eq::hatDiv1} and~\eqref{eq::hatDiv2}, there exists a $\hbphi_{\hT} \in \hbPhi(\hT)$, such that
    \begin{equation}
    \label{eq::hatDiv3}
    \Hat{\Div}\hbphi_{\hT} =\hq_{\hT} ~~\text{and}~~\|\hbphi_{\hT}\|_{H^1(\hT)}\leq C\|\hq_{\hT}\|_{L^2(\hT)}.
    \end{equation}
    Let $\bphi_T(x)=A_T(\hat{x})\hbphi_{\hT}(\hat{x})$ with $x = F_T(\hx)$, then $\bphi_T\in\bPhi(T)$. \eqref{pro-AT:div} and \eqref{eq::hatDiv3} give
    \begin{equation}
    \label{eq::hatDiv4}
    \Div\bphi_T(x)=\frac{1}{\det(DF_T(\hat{x}))}\Hat{\Div}\hbphi_{\hT}(\hat{x})=\frac{1}{\det(DF_T \circ F_T^{-1}(x))}q_T(x).    
    \end{equation}
    Lemma~\ref{lem::normH1XThXT}, \eqref{eq::hatDiv3}, Lemma~\ref{lem::NormInThT}, and \eqref{Prop-FT:eq2} yield
    $$
    \begin{aligned}
     \|\nabla\bphi_T\|_{L^2(T)}&\leq Ch_T^{-1}\|\hbphi_{\hT}\|_{H^1(\hT)}\leq Ch_T^{-1}\|\hq_{\hT}\|_{L^2(\hT)}\\
     &\leq Ch_T^{-2}\|q_T\|_{L^2(T)}\leq C \big\| \frac{1}{\det(DF_T \circ F_T^{-1} )}q_T \big\|_{L^2(T)},   
    \end{aligned}
    $$
    which together with \eqref{eq::hatDiv4} completes the proof.
\end{proof}

Following similar procedures as in~\cite[Lemma 4.3]{neilan2021divergence}, one can prove that the piecewise constant
component of $Q_h$ can be controlled by $\bWh$, as shown in the following lemma. For the reader's convenience, the complete proof is provided.

\begin{lemma}\label{controlConst}
    For any $q_h \in Q_h$, there exists a $\bw_h \in\bWh$ such that
    $$
    \int_T   \Div \bw_h -\frac{2q_h |\tT|}{\det(DF_T\circ F_T^{-1})} \dx =0 \text{ and }\|\nabla_h \bw_h \|_{L^2(\Omega_h)}\leq C\|q_h\|_{L^2(\Omega_h)}.
    $$
\end{lemma}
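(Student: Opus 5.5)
The plan is to transfer the problem to the affine mesh $\tcalTh$, use the stability of the affine Taylor--Hood-type pair $\tbW_h\times\tQ_h^0$ there, and pull back via $\Psi_h^W$.

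\textbf{Step 1 (piecewise constants on the affine mesh).} Given $q_h=\Upsilon_h\tq_h\in Q_h$ with $\tq_h\in\tQ_h$, let $\tilde c_h$ be the piecewise constant on $\tcalTh$ with $\tilde c_h|_{\tT}=|\tT|^{-1}\int_{\tT}\tq_h\,\rd\tbx$. Since $\tq_h\in L^2_0(\tOmega_h)$, also $\tilde c_h\in L^2_0(\tOmega_h)$. By the classical stability of $\bP_2$ conforming velocities against piecewise constant pressures on the affine mesh (the first step of the Fortin--Soulie inf-sup proof), there exists $\tbw_h\in\tbW_h$ with
\[
\int_{\tT}\tDiv\tbw_h\,\rd\tbx=\int_{\tT}\tilde c_h\,\rd\tbx=\int_{\tT}\tq_h\,\rd\tbx \quad\text{for all } \tT\in\tcalTh,
\]
and $\|\nabla\tbw_h\|_{L^2(\tOmega_h)}\le C\|\tilde c_h\|_{L^2(\tOmega_h)}\le C\|\tq_h\|_{L^2(\tOmega_h)}$.

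\textbf{Step 2 (pulling back and checking the identity).} Set $\bw_h=\Psi_h^W\tbw_h\in\bWh$ by \eqref{eq:relations}. By the divergence theorem and Lemma~\ref{lem:pro-AT},
\[
\int_T\Div\bw_h\,\rd x=\int_{\partial T}\bw_h\cdot\bn_T\,\rd s=\int_{\partial\hT}\hbw\cdot\hbn\,\rd\hat s.
\]
The key observation is that the reference normal flux of $\hbw=A_T^{-1}\bw_h\circ F_T$ on each reference edge is a quadratic polynomial. It is therefore integrated exactly by Simpson's rule at the endpoints and midpoint, where $\Psi^W_T$ reproduces the nodal values of $\tbw_h$. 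Concretely, I would use that $\bw_h\cdot\bn\,\rd s$ pulled back equals $\hbw\cdot\hbn\,\rd\hat s$, and that at the nodes $\hbw(\hat a)=A_T^{-1}(\hat a)\tbw_h(\tba)$. I must check that the edge integral matches $\int_{\tilde e}\tbw_h\cdot\tbn\,\rd\tilde s$; this is the content of the argument in \cite[Lemma 4.3]{neilan2021divergence}, which I would adapt. The main obstacle is this step: verifying that $\int_T\Div\bw_h=\int_{\tT}\tDiv\tbw_h$ on curved elements, using the quadratic structure and nodal matching of $\Psi^W_T$.

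Granting that identity, Remark~\ref{re:qh} gives
\[
\int_T\frac{2q_h|\tT|}{\det(DF_T\circ F_T^{-1})}\,\rd x=2|\tT|\int_{\hT}\tq_h\circ F_{\tT}\,\rd\hat x=\int_{\tT}\tq_h\,\rd\tbx=\int_{\tT}\tDiv\tbw_h\,\rd\tbx=\int_T\Div\bw_h\,\rd x,
\]
which is the required equality.

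\textbf{Step 3 (the bound).} Theorem~\ref{Psitbvandbv}(c), applied elementwise and summed, gives $\|\nabla_h\bw_h\|_{L^2(\Omega_h)}\le C\|\tbw_h\|_{H^1(\tOmega_h)}$. The Poincar\'e inequality on $\tbW_h\subset\bH^1_0$ reduces this to $C\|\nabla\tbw_h\|\le C\|\tq_h\|_{L^2(\tOmega_h)}$. Finally, $\|\tq_h\|_{L^2(\tT)}\le C\|q_h\|_{L^2(T)}$ follows from Lemma~\ref{lem::NormInThT} and \eqref{Prop-FT:eq2}, since both norms are equivalent to $h_T\|\hq\|_{L^2(\hT)}$. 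This completes the estimate.
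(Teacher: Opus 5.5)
Your route is the paper's. Your $\tilde c_h$ is exactly the paper's $\tilde{\bar q}_h$, since $\int_T 2q_h/\det(DF_T\circ F_T^{-1})\,\rd x=|\tT|^{-1}\int_{\tT}\tq_h\,\rd\tbx$. Step~1 is what the paper builds from a continuous right inverse of the divergence on $\tOmega_h$ plus the edge-mean-preserving $\bP_2$ interpolant. Step~3 uses Theorem~\ref{Psitbvandbv}(c) in the same way.

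The gap is that you grant the only nontrivial identity, $\int_T\Div\bw_h\,\rd x=\int_{\tT}\tDiv\tbw_h\,\rd\tbx$, and your sketch does not yet prove it. Simpson's rule needs the nodal values of the two quadratic reference fluxes to agree. On the curved element these are $\tbw_h(\tilde a)\cdot A_T^{-\intercal}(\hat a)\hbn_{\hat e}$; on the affine one they are $\tbw_h(\tilde a)\cdot A_{\tT}^{-\intercal}\hbn_{\hat e}$. Since $A_T(\hat a)\neq A_{\tT}$ in general, matching $\bw_h(a)=\tbw_h(\tilde a)$ is not enough by itself.

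The missing observation is edge-by-edge.
\begin{itemize}
\item The vector $A_T^{-\intercal}\hbn_{\hat e}=\det(DF_T)\,DF_T^{-\intercal}\hbn_{\hat e}$ is the cofactor matrix of $DF_T$ applied to $\hbn_{\hat e}$. In two dimensions this depends only on the tangential derivative of $F_T$ along $\hat e$.
\item On a straight edge, $F_T|_{\hat e}$ is affine and equals $F_{\tT}$ at both endpoints, so $F_T|_{\hat e}=F_{\tT}|_{\hat e}$. Hence $A_T^{-\intercal}\hbn_{\hat e}=A_{\tT}^{-\intercal}\hbn_{\hat e}$ on all of $\hat e$, and the two fluxes coincide. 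This is exactly Theorem~\ref{Psitbvandbv}(b), which the paper simply invokes.
\item On a curved edge the factors differ, but curved edges are boundary edges. There $\tbw_h\in\bH^1_0(\tOmega_h)$ vanishes at the three nodes, so $\bw_h$ vanishes at the three nodes of $e$. Lemma~\ref{lem:VTestimate1} then gives $\bw_h|_e=0$, and both fluxes are zero.
\end{itemize}
Summing over $\partial T$ gives the identity, and the rest of your argument goes through.

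A smaller point concerns Step~1. The constant must be independent of $h$ even though the domain $\tOmega_h$ varies with $h$. The paper justifies this through the continuous inf-sup constant on $\tOmega_h$ and the properties of $G$. You should state this rather than fold it into ``classical stability''.
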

\begin{proof}
Given $q_h \in Q_h$, the definition of $Q_h$ in \eqref{defQinterpolate} shows that there exists a $\tq_h \in \tQ_h$ such that $q_h = \Upsilon \tq_h$. 
Let $\bar{q}_h$ be a piecewise constant function defined on $\calTh$ as
$$
\bar{q}_h|_T=\int_T\frac{2q_h}{\det(DF_T\circ F_T^{-1})} \dx \quad \text{ for all } T \in \calTh,
$$
and $\tilde{\bar{q}}_h$ be a piecewise constant function on $\tcalTh$ such that $\tilde{\bar{q}}_h|_{\tT}=\bar{q}_h|_T$ for all $\tT \in \tcalTh$.
This and \eqref{re-eq:qh} show $\tilde{\bar{q}}_h\in\tQ_h$ since
$$
\int_{\tOmega_h}\tilde{\bar{q}}_h \rd \tbx =\sum_{\tT\in\tcalTh}\tilde{\bar{q}}_h|_{\tT} |\tT|=\sum_{T\in\calTh}\bar{q}_h|_{T} |\tT|
=\sum_{T\in\calTh} \int_T\frac{2q_h|\tT|}{\det(DF_T\circ F_T^{-1})}\dx = 0.
$$
By~\cite[Theorem 4.4]{bernardi2016contiuity} and the properties of $G$, there exists a $\tbw \in\bH_0^1(\tOmega_h)$ such that
\begin{equation}
\label{is2:eq2}
\tDiv\tbw=\tilde{\bar{q}}_h \text{ and  } \|\tnabla\tbw\|_{L^2(\tOmega_h)}\leq C\|\tilde{\bar{q}}_h \|_{L^2(\tOmega_h)}.
\end{equation}
Additionally, for $\tbw\in\bH_0^1(\tOmega_h)$, from the stability proof of the piecewise quadratic-constant pair~\cite{bernardi1985analysis,boffi2013mixed}, there exists a $\tbw_h \in\tbW_h$ such that
\begin{equation}
\label{is2:eq3}
\int_{\tilde{e}}\tbw_h \rd \tilde{s} =\int_{\tilde{e}}\tbw \rd \tilde{s} \, \text{ for all } \tilde{e} \in \tilde{\mathcal{E}}_h,\quad \|\tnabla \tbw_h \|_{L^2(\tOmega_h)}\leq C\|\tnabla\tbw\|_{L^2(\tOmega_h)}.    
\end{equation}
Recall the definition of $\Psi_h^W$ and let $\bw_h =\Psi_h^W\tbw_h \in \bWh$. 

The definition of $\bar{q}_h$, the divergence theorem and the definition of $\tilde{\bar{q}}_h$, and Theorem \ref{Psitbvandbv} (b) and \eqref{is2:eq2} show that 
\begin{equation}
\label{is2:eq4}
\begin{aligned}
& \int_T  \Div\bw_h -\frac{2q_h |\tT|}{\det(DF_T\circ F_T^{-1})}  \dx  =  \int_T\Div\bw_h \dx - \bar{q}_h|_T |\tT|\\
& \qquad  = \int_{\partial T} \bw_h \cdot \bn_T \rd s - \int_{\tT}\tilde{\bar{q}}_h \rd \tbx   = \int_{\partial\tT} \tbw_h \cdot\tilde{\bn}_{\tT} \rd \tilde{s} - \int_{\tT} \tDiv \tbw \rd \tbx = 0,
\end{aligned}    
\end{equation}
where the last equality is obtained by the divergence theorem for $\tDiv\tbw$ and \eqref{is2:eq3}. 
Additionally, the definitions of $\bar{q}_h$ and $\tilde{\bar{q}}_h$, the Cauchy-Schwarz inequality, and the estimate of $\det(DF_T)$ in \eqref{Prop-FT:eq2} yield 
$$
\begin{aligned}
 \|\tilde{\bar{q}}_h \|_{L^2(\tOmega_h)}^2
&=\sum_{\tT\in\tcalTh}\|\tilde{\bar{q}}_h\|_{\tT}^2=\sum_{\tT\in\tcalTh} (\tilde{\bar{q}}_h |_{\tT})^2 |\tT| = \sum_{T\in\calTh} (\bar{q}_h|_T)^2 |\tT|\\
& \leq \sum_{T\in\calTh}  |\tT|  \int_T\frac{4}{\det(DF_T\circ F_T^{-1})^2}\dx \int_T q_h^2 \dx   \leq C \|q
_h\|_{L^2(\Omega_h)}^2. 
\end{aligned}
$$
Theorem \ref{Psitbvandbv} (c), \eqref{is2:eq3}, \eqref{is2:eq2} and this show that   
$$
\| \nabla_h \bw_h \|_{L^2(\Omega_h)} \leq C\|\tnabla\tbw_h \|_{L^2(\tOmega_h)} \leq C\|\tnabla\tbw\|_{L^2(\tOmega_h)} \leq C\|\tilde{\bar{q}}_h\|_{L^2(\tOmega_h)} \leq C \|q_h\|_{L^2(\Omega_h)}^2.     
$$
This, combined with \eqref{is2:eq4}, completes the proof.
\end{proof}

The global inf-sup condition follows directly from the above two lemmas.

\begin{theorem}[Inf-sup condition]
\label{thm:inf-sup}
    It holds that
    $$
    \sup_{\bv_h  \in \bVh \setminus\{0\}}\frac{\int_{\Omega_h}q_h \Div_h \bv_h \dx} {\|\nabla_h \bv_h \|_{L^2(\Omega_h)}}\geq C\|q_h\|_{L^2(\Omega_h)} \quad \text{ for all } q_h \in Q_h.
    $$
\end{theorem}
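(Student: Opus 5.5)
The plan is the standard two-step (Stenberg-type) argument, combining Lemma~\ref{controlConst} for the piecewise constant part of $q_h$ with Lemma~\ref{BubbleProperty} for the element-wise mean-free part. Fix $q_h\in Q_h$ and write $\delta_T \coloneqq \det(DF_T\circ F_T^{-1})$. On each $T$, split $q_h|_T = c_T + q_T^0$ with $q_T^0\in Q_0(T)$. This means $\hq=\hat c+\hq^0$ on $\hT$, where $\hat c$ is the mean of $\hq$ on $\hT$.

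In the transformed inner product $\int_T \delta_T^{-1} q\,r\dx=\int_{\hT}\hq\,\hat r\,\hdx$, this splitting is orthogonal. So it is natural to test with weighted pairings and to use $\|q\|_{L^2(T)}^2\simeq h_T^2\|\hq\|_{L^2(\hT)}^2 \simeq \int_T \delta_T^{-1} q^2\dx\cdot h_T^{2}\cdot h_T^{-2}$. Here I use \eqref{Prop-FT:eq2}, i.e.\ $\delta_T\simeq h_T^2$, together with $|\tT|\simeq h_T^2$.

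\textbf{Step 1 (constant part).} Take $\bw_h\in\bWh$ from Lemma~\ref{controlConst}. Since $\Div\bw_h=\delta_T^{-1}\hat{\Div}\hbw$, I compute
\[
\int_T q_h\Div\bw_h\dx=\int_{\hT}\hq\,\hat{\Div}\hbw\,\hdx .
\]
The problem is that $\hat{\Div}\hbw\in P_1(\hT)$ is not constant. So the lemma alone does not give $\int_T q_h\Div \bw_h = \sum_T 2|\tT|\int_{\hT}\hat c\,\hq$; it only controls the integral of $\Div\bw_h$. I therefore expect the identity
\[
\int_{\Omega_h} q_h\Div\bw_h\dx=\sum_T 2|\tT|\,|\hT|^{-1}\ldots
\]
to hold only up to a term $\sum_T\int_{\hT}\hq^0\,\hat{\Div}\hbw$. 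I will bound this remainder by $C\|\nabla_h\bw_h\|\,\|q^0\|$, which is fine in a two-step argument. For the main term, $\int_T c_T\Div\bw_h = c_T\int_T\Div\bw_h=c_T\int_T 2|\tT|\delta_T^{-1}q_h = c_T\cdot 2|\tT|\cdot|\hT|\,\hat c\cdot$const $\simeq \|c\|_{L^2(T)}^2$. Summing over $T$ gives
\[
\int_{\Omega_h}q_h\Div_h\bw_h \ge C_1\|c\|^2 - C_2\|q^0\|\,\|c\|
\]
with $\|\nabla_h\bw_h\|\le C\|q_h\|$. Here $\|c\|$ and $\|q^0\|$ denote the $L^2(\Omega_h)$ norms of the piecewise pieces, which are equivalent to the weighted norms by the scaling above.

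\textbf{Step 2 (mean-free part).} By Lemma~\ref{BubbleProperty}, on each $T$ pick $\bphi_T$ with $\Div\bphi_T=\delta_T^{-1}q^0_T$, scaled by $h_T^2$. Concretely, use $\hat{\Div}\hbphi=h_T^2\hq^0$ so that the norms balance. Then
\[
\int_T q_h\Div\bphi_T=h_T^2\int_{\hT}(\hat c+\hq^0)\hq^0=h_T^2\|\hq^0\|^2_{L^2(\hT)}\simeq\|q^0\|^2_{L^2(T)},
\]
because $\hq^0$ has zero mean, so the constant part drops out exactly. Also $\|\nabla\bphi_T\|\le C\|q^0\|_{L^2(T)}$. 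Since $\bPhi_h$ has no interelement constraints, $\bphi_h\coloneqq\sum_T\bphi_T\in\bPhi_h\subset\bVh$.

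\textbf{Step 3 (combine).} Set $\bv_h=\bw_h+\gamma\bphi_h$ with $\gamma$ large. Then
\[
\int q_h\Div_h\bv_h\ge C_1\|c\|^2-C_2\|c\|\|q^0\|+\gamma C_3\|q^0\|^2\ge C\|q_h\|^2
\]
by Young's inequality, while $\|\nabla_h\bv_h\|\le C\|q_h\|$. This yields the inf-sup bound.

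The main obstacle is the bookkeeping in Step 1. One must check that the curved Jacobian weight $\delta_T^{-1}$, rather than a constant, does not spoil the decomposition. The $\hat{}$-level computation above handles this, because both the divergence and the bubble construction live naturally on $\hT$.
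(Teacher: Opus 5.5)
Your argument is correct, but it is organised differently from the paper's proof.

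The paper never splits $q_h$ into element means and mean-free parts. It takes $\bw_h$ from Lemma~\ref{controlConst} and checks that $r_T \coloneqq \delta_T\Div\bw_h|_T-2|\tT|\,q_h|_T$ belongs to $Q_0(T)$; its pullback to $\hT$ has zero mean precisely because of the integral identity in that lemma. It then uses Lemma~\ref{BubbleProperty} to pick $\bphi_T$ with $\Div\bphi_T=\delta_T^{-1}r_T$ and sets $\bv_h=\bw_h-\bphi_h$. The bubbles thus cancel the mean-free part of $\Div\bw_h$ exactly, so $\Div_h\bv_h=2|\tT|\delta_T^{-1}q_h$ on every element. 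Hence $\int_{\Omega_h}q_h\Div_h\bv_h\dx=\sum_T\int_T2|\tT|\delta_T^{-1}q_h^2\dx\ge C\|q_h\|_{L^2(\Omega_h)}^2$ follows at once, with no cross term, no Young inequality and no parameter $\gamma$. As a by-product this gives an explicit bounded right inverse of the weighted divergence.

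Your Stenberg-type version instead keeps the mean-free part of $\hDiv\hbw$ as a perturbation $\sum_T\int_{\hT}\hq^0\,\hDiv\hbw\hdx$ and dominates it with $\gamma\bphi_h$. This costs a little more bookkeeping but is more flexible. It uses only that $\bWh$ controls element means and that the bubbles control $Q_0(T)$ (a local inf-sup bound would suffice). It does not need the correction $r_T$ to be exactly a bubble divergence.

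A few points to tidy up:
\begin{itemize}
\item The remainder bound follows from $\|\hDiv\hbw\|_{L^2(\hT)}^2=\int_T\delta_T(\Div\bw_h)^2\dx\le Ch_T^2\|\nabla\bw_h\|_{L^2(T)}^2$ together with $\|\hq^0\|_{L^2(\hT)}\le Ch_T^{-1}\|q^0\|_{L^2(T)}$.
\item Lemma~\ref{controlConst} as stated only gives $\|\nabla_h\bw_h\|_{L^2(\Omega_h)}\le C\|q_h\|_{L^2(\Omega_h)}$. So your cross term is really $C_2\|q^0\|\,\|q_h\|\le C_2\|q^0\|(\|c\|+\|q^0\|)$, not $C_2\|q^0\|\,\|c\|$. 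This is harmless, because the extra $C_2\|q^0\|^2$ is absorbed by enlarging $\gamma$. Alternatively, in the proof of that lemma the piecewise constant $\bar{q}_h$ is exactly the element mean $\hat c$ of $\hq$, so in fact $\|\nabla_h\bw_h\|\le C\|c\|$.
\item Your scaling display should read $\|q\|_{L^2(T)}^2\simeq h_T^2\int_T\delta_T^{-1}q^2\dx$.
\end{itemize}
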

\begin{proof}
    Given $q_h \in Q_h$, Lemma \ref{controlConst} shows that there exists $\bw_h \in \bWh$ such that
    \begin{equation}
    \label{is3:eq1}
    \int_T \Div\bw_h -\frac{2q_h|\tT|}{\det(DF_T\circ F_T^{-1})}  \dx =0 \text{ and }\|\nabla_h \bw_h\|_{L^2(\Omega_h)}\leq C\|q_h\|_{L^2(\Omega_h)}.
    \end{equation}
    For any $T \in \calTh$, let $\bw_T = \bw_h|_T \in \bW(T)$ and $q_T = q_h|_T \in Q(T)$. Claim that $q^0_T \coloneqq \det(DF_T\circ F_T^{-1}) \Div \bw_T - 2 q_T |\tT| \in Q_0(T)$, which will be shown later. Lemma~\ref{BubbleProperty} shows that there exists unique $\bphi_T \in \bPhi(T)$ such that
    \begin{equation}
    \label{is3:eq2}
    \Div\bphi_T = \frac{1}{\det(DF_T\circ F_T^{-1})} q^0_{T} = \Div\bw_T -\frac{2q_T|\tT|}{\det(DF_T\circ F_T^{-1})},    
    \end{equation}
    and
    \begin{equation}
    \label{is3:eq3}
    \|\nabla \bphi_T \|_{L^2(T)} \leq C ( \| \Div \bw_T \|_{L^2(T)} + \|q_T\|_{L^2(T)} ).        
    \end{equation}
    Define $\bphi_h|_T = \bphi_T$ for all $T \in \calTh$ and let $\bv_h = \bw_h - \bphi_h$. By \eqref{is3:eq2}, it holds that
    \begin{equation}
    \label{is3:eq5}
    \begin{aligned}
    \int_{\Omega_h}q_h \Div_h \bv_h \dx &= \sum_{T\in \calTh} \int_T \Div \bv_T \, q_T \dx = \sum_{T \in \calTh} \int_T \frac{2q_T^2|\tT|}{\det(DF_T\circ F_T^{-1})} \dx   \\
    & \geq C \sum_{T \in \calTh} \int_T q_T^2 \dx = C \| q \|_{L^2(\Omega_h)}^2.
    \end{aligned}    
    \end{equation}
    The combination of \eqref{is3:eq1} and \eqref{is3:eq3} gives $\| \nabla_h \bv_h\|_{L^2(\Omega_h)} \leq C \| q_h\|_{L^2(\Omega_h)}$. This, together with \eqref{is3:eq5}, proves the inf-sup condition. 
    
    It remains to show that $q_T^0 \in Q_0(T)$.
    By the definitions of $\bW(T)$ and $Q(T)$ that there exist $\hbw_{\hT}$ and $\hq_{\hT}$ such that $\bw_T(\bx) = A_T(\hat{x})\hbw_{\hT}(\hat{x})$ and $q_T(\bx)=\hq_{\hT}(\hat{x}) \text{ with }x = F_T(\hat{x})$.
    Define $\hq^0_{\hT} \coloneqq \hDiv \hbw_{\hT} - 2 \hq_{\hT}| \tT| \in \hQ(\hT)$. It follows from \eqref{pro-AT:div} that $q_T^0(\bx)= \hq_{\hT}^0(\hat{x})$ with $\bx = F_T(\hat{x})$, which implies $q_T^0 \in Q(T)$. This, the definition of $q_T^0$ and \eqref{is3:eq1} lead to
    $$
    \int_{\hT} \hq_{\hT}^0 \rd \hat{x}  = \int_{T} \frac{q_T^0}{\det(DF_T\circ F_T^{-1})} \dx =  \int_T \left( \Div\bw_T -\frac{2q_T|\tT|}{\det(DF_T\circ F_T^{-1})} \right) \dx =0,
    $$
    which implies $\hq_{\hT}^0 \in \hQ_0(\hT)$. Thus, $q_{T}^0 \in Q_0(T)$.
\end{proof}
\begin{remark}
The idea for ensuring the inf-sup condition in this paper is analogous to that in~\cite{neilan2021divergence}.
Specifically, Clough--Tocher refinements are adopted in \cite{neilan2021divergence} to ensure that there are sufficient functions on each element to control the component of $Q_h$ orthogonal to piecewise constants, while nonconforming bubble functions are adapted in this paper for the same purpose.
\end{remark}

\begin{theorem}
There exists a unique solution $(\bu_h, p_h) \in \bVh \times Q_h$ satisfying~\eqref{eqn:FEM}.
\end{theorem}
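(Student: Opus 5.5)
The plan is to apply the Babuška–Brezzi theory for saddle point problems on the finite-dimensional space $\bVh\times Q_h$. For that I need two ingredients: coercivity of the viscous form on the discrete kernel, which comes for free once $\|\nabla_h\cdot\|_{L^2(\Omega_h)}$ is known to be a norm on $\bVh$, and the discrete inf-sup condition from Theorem~\ref{thm:inf-sup}. Because \eqref{eqn:FEM} is a square linear system, uniqueness already gives existence. So the concrete plan is to take data $\bft_h=0$ and show that the solution must be $(\bu_h,p_h)=(0,0)$.

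First I check that $\|\nabla_h\bv_h\|_{L^2(\Omega_h)}$ is a norm on $\bVh$. This follows from the discrete Poincaré inequality in Lemma~\ref{col::H1Norm}: if $\nabla_h\bv_h=0$, then $\|\bv_h\|_{L^2(\Omega_h)}=0$. Hence the bilinear form $a(\bu_h,\bv_h)=\nu\int_{\Omega_h}\nabla_h\bu_h:\nabla_h\bv_h\dx$ is coercive on all of $\bVh$ with constant $\nu$, and in particular on the kernel
\[
\bZ_h=\Big\{\bv_h\in\bVh:\ \int_{\Omega_h}q_h\Div_h\bv_h\dx=0\ \text{for all } q_h\in Q_h\Big\}.
\]

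Next I do the uniqueness argument directly. With $\bft_h=0$, equation \eqref{eqn:FEM2} says $\bu_h\in\bZ_h$. Testing \eqref{eqn:FEM1} with $\bv_h=\bu_h$ makes the pressure term vanish, so $\nu\|\nabla_h\bu_h\|_{L^2(\Omega_h)}^2=0$, and therefore $\bu_h=0$. Equation \eqref{eqn:FEM1} then reduces to $\int_{\Omega_h}p_h\Div_h\bv_h\dx=0$ for all $\bv_h\in\bVh$, and Theorem~\ref{thm:inf-sup} gives $\|p_h\|_{L^2(\Omega_h)}=0$. Since the number of unknowns equals the number of equations, injectivity implies surjectivity. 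This yields existence for any $\bft_h\in\bL^2(\Omega_h)$, and the Brezzi theory also supplies the usual stability bound.

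The only point needing care is that $Q_h$ is not a zero-mean space, as Remark~\ref{re:qh} notes. So I need the inf-sup condition to control all of $Q_h$, not just a quotient space. Theorem~\ref{thm:inf-sup} is stated for every $q_h\in Q_h$, and its proof controls $q_h$ directly through the weighted mean condition of Lemma~\ref{controlConst}. Therefore no constant mode is left uncontrolled, and the pressure is unique without any extra normalization. I expect this to be the only possible obstacle, and it is already resolved by the earlier results.
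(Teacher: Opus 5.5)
Your proof is correct and is essentially the paper's argument. The paper also uses Lemma~\ref{col::H1Norm} to get that $\|\nabla_h\cdot\|_{L^2(\Omega_h)}$ is a norm on $\bVh$, then invokes Babu\v{s}ka--Brezzi theory with Theorem~\ref{thm:inf-sup}; you simply unpack the finite-dimensional version (homogeneous data forces $(\bu_h,p_h)=(0,0)$, and an injective square system is surjective). On your concern about constants: the real reason no constant mode survives is that $Q_h=\Upsilon_h\tQ_h$ with $\tQ_h\subset L^2_0(\tOmega_h)$ imposes the weighted mean-zero condition of Remark~\ref{re:qh}, which excludes nonzero constants, and these constants are exactly the pressures $L^2$-orthogonal to $\Div_h\bVh$.
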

\begin{proof}
Note that Lemma \ref{col::H1Norm} shows that $\| \nabla_h \bv_h \|_{L^2(\Omega_h)}$ is a norm for all $\bv_h \in \bVh$.
By the Babusk\v{a}--Brezzi theory~\cite{boffi2013mixed} and Theorem \ref{thm:inf-sup}, the proof is completed.
\end{proof}

Although $\Div_h\bVh$ is not a subset of $Q_h$, the lemma below shows that the finite element method \eqref{eqn:FEM} yields element-wise divergence-free velocity approximations. The proof follows a similar procedure to that in~\cite[Lemma 5.2]{neilan2021divergence}. For completeness, the detailed proof is provided.
\begin{lemma}
\label{lem:div-free}
Suppose $\bu_h \in \bVh$ satisfying \eqref{eqn:FEM2}. Then $\Div_h \bu_h = 0$ in $\Omega_h$.    
\end{lemma}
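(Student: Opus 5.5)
The idea is to find a test function $q_h\in Q_h$ that makes \eqref{eqn:FEM2} equivalent to $\|\Div_h\bu_h\|$-type control. Fix $T\in\calTh$ and write $\bu_h|_T=A_T\hbu_{\hT}\circ F_T^{-1}$ with $\hbu_{\hT}\in\bP_2(\hT)$. This is possible because $\bW(T)+\bPhi(T)$ is the Piola image of $\hbW(\hT)+\hbPhi(\hT)=\bP_2(\hT)$. By \eqref{pro-AT:div},
$$\Div\bu_h|_T=\frac{\hDiv\hbu_{\hT}}{\det(DF_T)}\circ F_T^{-1},$$
and $\hDiv\hbu_{\hT}\in P_1(\hT)=\hQ(\hT)$. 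Hence $r_T:=\det(DF_T\circ F_T^{-1})\Div\bu_h|_T$ lies in $Q(T)$.

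The candidate test function is the piecewise $q_h|_T=r_T$. Using Lemma~\ref{lem:pro-AT} (change of variables), one computes
$$\int_T q_h\Div\bu_h\dx=\int_{\hT}\hq_{\hT}\,\hDiv\hbu_{\hT}\hdx=\int_{\hT}(\hDiv\hbu_{\hT})^2\hdx .$$
So if $q_h$ were admissible, \eqref{eqn:FEM2} would give $\hDiv\hbu_{\hT}=0$ on every $\hT$, and therefore $\Div_h\bu_h=0$.

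Two obstacles must be handled for $q_h$ to be admissible.

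\textbf{Continuity.} $Q_h=\Upsilon_h\tQ_h$ consists of images of functions in $\tQ_h$, which are discontinuous piecewise linears. Thus any elementwise choice $q_h|_T\in Q(T)$ is allowed, and continuity is not an issue.

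\textbf{Mean-value constraint.} The underlying $\tq_h$ must have zero mean on $\tOmega_h$. By \eqref{re-eq:qh}, this is equivalent to $\sum_T 2|\tT|\int_{\hT}\hq_{\hT}\hdx=0$. With $\hq_{\hT}=\hDiv\hbu_{\hT}$, the divergence theorem on $\hT$ and Lemma~\ref{lem:pro-AT} give
$$\int_{\hT}\hDiv\hbu_{\hT}\hdx=\int_{\partial T}\bu_h\cdot\bn_T\ds .$$
Write $\bu_h=\bw_h+\bphi_h$ by Theorem~\ref{the::VhEqXhDirSumPhih}. The bubble part has zero flux, since $\hDiv\hbPhi(\hT)=\hQ_0(\hT)$ by \eqref{eq::hatDiv1}. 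So the integral reduces to $\int_{\partial T}\bw_h\cdot\bn_T\ds$. The weight $2|\tT|$ differs from element to element, so the sum is not obviously a telescoping sum of fluxes.

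To avoid this difficulty, I would not verify the mean condition directly. Instead, first test \eqref{eqn:FEM2} with the constant-mean part, then handle the remainder.

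\textbf{Step 1 (elementwise means vanish).} Decompose $r_T=c_T+r_T^0$, where $c_T$ is constant and $r_T^0\in Q_0(T)$. Functions with $\hq_{\hT}\in\hQ_0(\hT)$ on each element automatically satisfy the global mean condition, so $r^0:=\sum_T r_T^0$ lies in $Q_h$.

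Next consider the piecewise constants. Set $\hat c_{\hT}=\int_{\hT}\hDiv\hbu_{\hT}\hdx=\int_{\partial T}\bw_h\cdot\bn_T\ds$. Take $q_h$ with $\hq_{\hT}=\mu_{\hT}$, a constant on each element. Then $\int_{\Omega_h} q_h\Div_h\bu_h\dx=\sum_T\mu_{\hT}\hat c_{\hT}$, and the admissibility constraint is $\sum_T|\tT|\mu_{\hT}=0$. Condition \eqref{eqn:FEM2} therefore forces $\hat c_{\hT}=\lambda|\tT|$ for some constant $\lambda$. Summing over $T$, the fluxes of $\bw_h\in\bH_0(\Div;\Omega_h)$ (by \eqref{pro-Wh:space}) telescope to zero: $\sum_T\hat c_{\hT}=0$. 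Hence $\lambda\sum|\tT|=0$, so $\lambda=0$ and every $\hat c_{\hT}=0$. This step is the key one; it relies on the normal continuity of $\bWh$ and the zero-flux property of the bubbles.

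\textbf{Step 2 (the remainder vanishes).} Since every $\hat c_{\hT}=0$, we have $\hDiv\hbu_{\hT}\in\hQ_0(\hT)$, so $q_h=r^0$ is admissible. Testing \eqref{eqn:FEM2} with it gives $\sum_T\|\hDiv\hbu_{\hT}\|_{L^2(\hT)}^2=0$. Therefore $\hDiv\hbu_{\hT}=0$ on each $\hT$, and $\Div_h\bu_h=0$ in $\Omega_h$ by \eqref{pro-AT:div}.
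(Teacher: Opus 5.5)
Your proof is correct and uses the same ingredients as the paper: the Piola divergence identity \eqref{pro-AT:div}, the characterization \eqref{re-eq:qh} of $Q_h$, $\bWh\subset\bH_0(\Div;\Omega_h)$, and the zero flux of the bubbles. The only real difference is how you handle the weight $2|\tT|$ in the mean constraint, which you identified as an obstacle.

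The paper removes that obstacle by rescaling the test function elementwise. It takes $q_h|_T=\frac{1}{2|\tT|}\,\hDiv\hat{\bu}_{\hT}\circ F_T^{-1}$. Then $2|\tT|\,q_h/\det(DF_T\circ F_T^{-1})=\Div\bu_h|_T$, so the admissibility condition in \eqref{re-eq:qh} becomes $\sum_{T\in\calTh}\int_T\Div\bu_h\dx=0$. That is exactly your flux argument: the normal continuity of $\bWh$ plus zero-flux bubbles. Testing \eqref{eqn:FEM2} with this $q_h$ then gives $\sum_T\frac{1}{2|\tT|}\|\hDiv\hat{\bu}_{\hT}\|^2_{L^2(\hT)}=0$ in one step.

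Your route splits the pulled-back divergence into elementwise means and a $Q_0$-remainder. The means are handled by the orthogonality argument ($\hat c_{\hT}=\lambda|\tT|$, then $\lambda=0$ from the vanishing total flux); the remainder is admissible automatically. This is longer but equally valid. Its advantage is that it separates the purely local part, which $Q_0(T)$ test functions kill without any global constraint, from the single global condition on the constants. That mirrors the two-step structure of the inf-sup proof.

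Incidentally, your Step 2 does not depend on Step 1. Testing with $r^0$ directly gives $\sum_T\|r^0_T\circ F_T\|^2_{L^2(\hT)}=0$, because the pullback of $r^0_T$ is $L^2(\hT)$-orthogonal to the constant part of $\hDiv\hat{\bu}_{\hT}$.
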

\begin{proof}
By Theorem \ref{the::VhEqXhDirSumPhih}, there exist unique $\bw_h \in \bWh$ and $\bphi_h \in \bPhi_h$ such that $\bu_h = \bw_h + \bphi_h$. Denote $\bw_T = \bw_h |_T \in \bW(T)$ and $\bphi_T = \bphi_h|_T \in \bPhi(T)$ for any $T \in \calTh$. It follows from the definitions of $\bW(T)$ and $\bPhi(T)$ that there exist $\hbw_{\hT}$ and $\hbphi_{\hT}$ such that $\bw_T(x)= A_T(\hx)\hbw_{\hT}(\hx)$ and $\bphi_T(x)=A_T(\hx)\hbphi_{\hT}(\hx)$ with $x = F_T(\hx)$.
Define $q_h$ to be the piecewise function as
$$
q_h|_T(x)=\frac{1}{2|\tT|}(\Hat{\Div}\hbw_T + \Hat{\Div}\hbphi_T)(\hat{x}) \text{ with }
x=F_T(\hat{x}) \text{ and } T=G_h(\tT).
$$
Since $\bw_h \in \bH_0(\Div;\Omega_h)$ in \eqref{pro-Wh:space}, the divergence theorem gives
\begin{equation*}
\sum_{T \in \calTh} \int_{T}   \Div \bw_T \dx =  \int_{\Omega_h} \Div \bw_h \dx = \int_{\partial \Omega_h} \bw_h \cdot \bn \ds  = 0.
\end{equation*}
Additionally, on each element $T \in \calTh$, the divergence theorem and Lemma \ref{lem:pro-AT} show
\begin{equation*} 
\int_{T} \Div \bphi_T \dx =\int_{\partial T} \bphi_T \cdot \bn_T \ds =\int_{\partial \hT} \hbphi_{\hT} \cdot \hbn_{\hT} \hds   = 0,
\end{equation*}
where the last equality holds due to $\hbphi_{\hT} \in \hbPhi(\hT)$ vanishes on the Gauss--Legendre points $\mathcal{G}_{\hT}$. 
The above two equalities combine with the property of the Piola transform \eqref{pro-AT:div} show
$$
\sum_{T\in\calTh}\int_T \frac{2|\tT|q_h}{\det(DF_T\circ F_T^{-1})}\dx=\sum_{T\in\calTh}\int_T \Div \bw_T + \Div\bphi_T \dx  = 0,
$$
which implies $q_h\in Q_h$ by Remark~\ref{re:qh}.
This, \eqref{eqn:FEM2}, and \eqref{pro-AT:div} show
$$
\begin{aligned}
0&=\int_{\Omega_h}q_h\Div_h\bu_h\dx = \sum_{T\in \calTh} \int_T q_h \Div(\bw_T+\bphi_T)  \dx \\
& =  \sum_{T\in \calTh}\frac{1}{2|\tT|}  \int_{\hT} \Hat{\Div}(\hbw_T +\hbphi_T)   \frac{\Hat{\Div}(\hbw_T +\hbphi_T)}{\det(DF_T)}  \det(DF_T) \hdx
\end{aligned}
$$
Thus, $\Hat{\Div}(\hbw_T +\hbphi_T)=0$ for all $T \in \calTh$, which implies $\Div_h \bu_h=0$ in $\Omega_h$.
\end{proof}

\section{Error estimate}
\label{sec::error_estimate}

This section details the convergence analysis for the discrete problem \eqref{eqn:FEM}. The consistency error, arising from the nonconforming nature of the curved elements, is bounded by exploiting the weak continuity of the velocity space $\bVh$.

\subsection{Weak Continuity Properties of $\bVh$}\label{sec::weak_continuity}
This subsection utilizes the connection between $\bVh$ and $\bVISO$ given in Lemma \ref{lem::EhError} to estimate the consistency error incurred by the approximation of $\bH^1_0(\Omega_h)$ with $\bVh$.

Given $e\in\calEh$, let $\omega_e$ denote the set of elements $T\in\calTh$ such that $e\subset\partial T$.
For any $e \in \calEh$, take $T\in\omega_e$. Let $\hat{e} \subset \partial \hT$ such that $F_T(\hat{e})=e$ and define $\Je(\hat{x})=\|A_T(\hat{x})^{-\intercal}\hat \bn_{\hat{e}}\|_{\mathbb{R}^2}$ for $\hat{x}$ on $\hat{e}$.
For any $v\in L^2(e)$, by~\cite[(2.1.62)]{boffi2013mixed}, it holds that 
\begin{equation}
\label{pro-Je}
\int_e v \ds = \int_{\hat{e}} \hat{v} J_e \hds \quad \text{ with } \hat{v}(\hat{x})=v(\bx) \text{ and } x=F_T(\hat{x}).
\end{equation}
For any $v\in L^2(e)$, define the interpolation operator $\Pe$ as
\begin{equation}
\label{DefinePe}
\Pe v(\bx) = \frac{1}{\Je(\hat{x})}\Phe(\Je(\hat{x}) \hat{v}(\hat{x})) \text{ with } x=F_T(\hat{x}) \text{ and } \hat{v}(\hat{x})=v(\bx),    
\end{equation}
where the $L^2$-projection $\Phe:L^2(\hat{e})\to P_1(\hat{e})$ is defined by
\begin{equation}
\label{defPhe}
\int_{\hat{e}}(\hat{v}-\Phe\hat{v}) \hat{p}\hds=0 \, \text{ for all } \hat{p}\in P_1(\hat{e}).   
\end{equation} 
Note that if $e$ is an interior edge, then $e$ is straight and $\Je(\hat{x})$ is constant, see~\cite[Lemma 2.4]{neilan2021divergence}. In this case, $\Pe$ coincides with the standard $L^2$-projection from $L^2(e)$ onto $P_1(e)$. If $e$ is a boundary edge, there exists a unique $T \in \omega_e$, which implies that $\Pe$ is well-defined. Using arguments analogous to those in~\cite[Lemma 3]{coruzeix1873conforming}, one can obtain the following estimate of $\Pe$; its proof is provided in Appendix \ref{app::Prooflem::EstimateNoncon}.
\begin{lemma}\label{lem::EstimateNoncon}
There exists a constant $C>0$, independent of $T$, such that
\begin{equation}\label{ErrorPe}
\left|\int_e\varphi(v-\Pe v)\ds\right|\leq Ch_T^{m+1}|\varphi|_{H^1(T)}\|v\|_{H^{m+1}(T)},
\end{equation}
holds for all $\varphi\in H^1(T)$ and $v\in H^{m+1}(T)$ with $m=0,1$.
\end{lemma}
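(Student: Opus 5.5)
The plan is to transfer the edge integral to the reference element, where $\Pe$ becomes a weighted $L^2$ projection, and then run the classical Crouzeix--Raviart argument of \cite[Lemma 3]{coruzeix1873conforming} there, with a Bramble--Hilbert estimate on $\hT$. Throughout, write $\hat\varphi=\varphi\circ F_T$, $\hat v=v\circ F_T$ and $\hat w\coloneqq \Je\hat v$.

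\textbf{Step 1: pull back and use orthogonality.} By \eqref{pro-Je} and \eqref{DefinePe},
\[
\int_e\varphi(v-\Pe v)\ds=\int_{\hat e}\hat\varphi\,(\hat w-\Phe\hat w)\hds .
\]
Since $\Phe$ is $L^2(\hat e)$-orthogonal onto $P_1(\hat e)\supset P_0(\hat e)$, we may subtract any constant $c$ from $\hat\varphi$. We also have $\hat w-\Phe\hat w=(I-\Phe)(\hat w-\hat p)$ for every $\hat p\in P_m(\hT)$, with $m\le1$. Choosing $c$ as the mean of $\hat\varphi$ on $\hT$, the trace theorem on $\hT$ and the Poincaré inequality give
\[
\Bigl|\int_{\hat e}(\hat\varphi-c)\,(I-\Phe)(\hat w-\hat p)\hds\Bigr|
\le C\,|\hat\varphi|_{H^1(\hT)}\,\inf_{\hat p\in P_m}\|\hat w-\hat p\|_{H^1(\hT)}
\le C\,|\hat\varphi|_{H^1(\hT)}\,|\hat w|_{H^{m+1}(\hT)} ,
\]
where the last step is Bramble--Hilbert.

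\textbf{Step 2: scaling back.} Lemma~\ref{lem::NormInThT} with $p=2$ and $m=1$ gives $|\hat\varphi|_{H^1(\hT)}\le C(|\varphi|_{H^1(T)}+\|\varphi\|_{L^2(T)})$. This is a problem: the $L^2$ part is not allowed by \eqref{ErrorPe}. The fix is to note that $\hat\varphi\mapsto\hat\varphi-c$ only requires the seminorm. By the chain rule, $|\hat\varphi|_{H^1(\hT)}\le C\|DF_T\|_{L^\infty}\,|\det DF_T|^{-1/2}\,|\varphi|_{H^1(T)}\le C|\varphi|_{H^1(T)}$, using \eqref{Prop-FT}. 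For $\hat w$ we need the product rule with $J_e$. $J_e$ is the trace on $\hat e$ of a smooth function of $\hat x$ built from $A_T^{-\intercal}$, and \eqref{ATBounds} gives $|A_T^{-1}|_{W^{r,\infty}}\le Ch_T^{1+r}$ for $r=0,1$ and $0$ for $r\ge2$. Extending $\hat n_{\hat e}$ as a constant, $J_e$ extends to $\hT$ with $|J_e|_{W^{r,\infty}(\hT)}\le Ch_T^{1+r}$ for $r\le 2$. Its magnitude is $\sim h_T$ (it is the length ratio), and since $|A_T^{-\intercal}\hat n|\ge ch_T$, the norm is smooth away from zero. Hence, by Leibniz,
\[
|\hat w|_{H^{m+1}(\hT)}\le C\sum_{r=0}^{m+1}h_T^{1+(m+1-r)}|\hat v|_{H^{r}(\hT)} .
\]
The second inequality of Lemma~\ref{lem::NormInThT} gives $|\hat v|_{H^r(\hT)}\le Ch_T^{r-1}\|v\|_{H^r(T)}$. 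Each term is then bounded by $Ch_T^{m+1}\|v\|_{H^{m+1}(T)}$, which yields \eqref{ErrorPe}.

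\textbf{Main obstacle.} The delicate point is the bound on $J_e$ in Step~2: we need $J_e$ to be smooth with scaled derivative bounds on a curved boundary edge. I would derive it from \eqref{ATBounds} and the lower bound $|A_T^{-\intercal}\hat n|\gtrsim h_T$, which follows from $|A_T|\le C h_T^{-1}$. A second point to check is that the full norm $\|v\|_{H^{m+1}(T)}$, rather than the seminorm, legitimately absorbs the lower-order terms generated by the curvature of $F_T$.
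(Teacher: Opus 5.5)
Your proposal is correct and follows essentially the same route as the paper: pull back to $\hat e$ via \eqref{pro-Je}, exploit that the functional vanishes on constants in $\hat\varphi$ and on $P_m$ in $\Je\hat v$ (trace plus Bramble--Hilbert), bound $|\Je|_{W^{j,\infty}(\hT)}\le Ch_T^{j+1}$ using \eqref{ATBounds}, and scale back with Lemma~\ref{lem::NormInThT}. Two points the paper passes over silently are justified explicitly in your version: the chain-rule bound $|\hat\varphi|_{H^1(\hT)}\le C|\varphi|_{H^1(T)}$, which avoids the $L^2$ term that Lemma~\ref{lem::NormInThT} would otherwise introduce, and the lower bound $|A_T^{-\intercal}\hat\bn_{\hat e}|\ge ch_T$, which is needed for the second-derivative estimate of $\Je$.
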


Define the jump of a vector-valued function $\bv$ across $e$ as
$$
[\bv]|_e=\sum_{T\in \omega_e}(\bv_T\otimes\bn_T)|_e,
$$
where $\bv_T=\bv|_T$ and $\bn_T$ is the outward unit normal of $\partial T$ restricted to $e$, and $(\boldsymbol{a}\otimes \bb)_{i,j}\coloneqq\ba_i b_j$. 

\begin{lemma}
\label{I3Claim}
For any $e\in\calEh$, it holds that
\begin{equation*}
\Big|\int_{e}[\bv_h]\ds\Big|\leq C\max_{T\in \omega_e}h_T^3\sum_{T\in \omega_e}\|\nabla\bv_h\|_{L^2(T)}.
\end{equation*}
\end{lemma}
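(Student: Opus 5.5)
The plan is to compare $\bv_h$ with $\bE_h\bv_h \in \bVh^{iso}$ and use the weak continuity of the isoparametric Fortin--Soulie space. Split
\begin{equation*}
\int_e[\bv_h]\ds=\int_e[\Eh\bv_h]\ds+\int_e[\bv_h-\Eh\bv_h]\ds .
\end{equation*}
The first term vanishes. Indeed, $\Eh\bv_h=\boldsymbol{\Upsilon}_h\tbv_h$ with $\tbv_h\in\tbV_h$. For an interior edge $e$, both neighbouring elements map $e$ affinely (the edge is straight), and $\tbv_h|_{\tT}\circ G_h^{-1}$ restricted to $e$ is a quadratic polynomial in arclength. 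Each such trace is a quadratic on $e$, and the two traces agree at the two Gauss--Legendre points. Since two-point Gauss quadrature is exact for cubics, $\int_e (\bv_1-\bv_2)\ds=0$, so $\int_e[\Eh\bv_h]\ds=0$. For a boundary edge, $e=F_T(\hat e)$ may be curved, and the trace vanishes at the two Gauss points of $\hat e$. Here $\int_e \hat v J_e\hds$ is not exactly zero, because $J_e$ is not constant, so this case needs separate handling (see the last paragraph).

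For the remaining term, reduce to a single element. For each $T\in\omega_e$, set $\bz=\bv_h-\Eh\bv_h$ on $T$. A scaled trace inequality on $T$, obtained by pulling back to $\hT$ with Lemma~\ref{lem::NormInThT} and using $J_e\le C h_T$, gives
\begin{equation*}
\Big|\int_e \bz\otimes\bn_T\ds\Big|\le |e|^{1/2}\|\bz\|_{L^2(e)}\le Ch_T^{1/2}\big(h_T^{-1/2}\|\bz\|_{L^2(T)}+h_T^{1/2}\|\nabla\bz\|_{L^2(T)}\big).
\end{equation*}
Lemma~\ref{lem::EhError} then yields only $Ch_T^2\|\nabla\bv_h\|_{L^2(T)}$, which is one power short of the claim. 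The extra power has to come from cancellation, and I expect this to be the main obstacle.

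To gain the extra power, I would not use crude norms but exploit the structure. Interior edges are straight, so $e\subset\partial\tT$. The $\bW$-part satisfies $\bw_h=\Psi_h^W\tbw_h$ and agrees with $\bE^W_h\bw_h$ at the three Lagrange nodes of $e$. On $e$, $F_T$ is affine but $A_T$ is not constant. I would expand $A_T(\hx)=A_T(\hat m)+O(h_T^0)(\hx-\hat m)$ along $\hat e$ and use \eqref{ATBounds}, so that the difference on $e$ is a nodal interpolation error: the quadratic in $\hx$ that agrees at three points, minus $A_T\hbw$. Simpson's rule is exact for cubics, so the integral of this error over $e$ is governed by the fourth derivative along $\hat e$, which is of order $|A_T|_{W^{2,\infty}}|\hbw|_{W^{2,\infty}}+\dots$ and carries an extra factor $h_T$. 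Similarly, the $\bPhi$-part equals $(A_T(\hx)-A_T(\hat b))\hbphi$. Its integral over $\hat e$ involves $\hbphi$, which vanishes at the Gauss points, times a factor that is linear to leading order; the two-point Gauss rule integrates a cubic exactly, so the leading term vanishes and the remainder is $O(h_T)$ smaller. Each piece is then bounded by $h_T\|\bw_h\|_{L^2(T)}+h_T\|\bphi_h\|_{L^2(T)}$ times the scaling. Lemma~\ref{lem:VTestimate1}, using that $\bw_h$ vanishes at a node and \eqref{bvL2ConByNabv} or Lemma~\ref{col::H1Norm}-type local Poincar\'e bounds, converts this into $Ch_T^3\|\nabla\bv_h\|_{L^2(T)}$. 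For boundary edges, the same Gauss/Simpson exactness argument applied to the weights $J_e$ gives the bound directly. I would take the maximum of $h_T$ over $\omega_e$ to finish.
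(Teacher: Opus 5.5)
Your proof is correct, and its engine is the paper's. Simpson's rule handles the $\bW$-part and two-point Gauss--Legendre handles the $\bPhi$-part. The fourth tangential derivative of a Piola-mapped quadratic involves only $D^2A_T$ and higher, which by \eqref{ATBounds} gives $Ch_T^2(\|\bw_T\|_{L^2(T)}+\|\bphi_T\|_{L^2(T)})$. Lemma~\ref{lem:VTestimate1} then finishes.

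The difference is where $\Eh$ enters. The paper applies the quadrature rules directly to $[\bw_h]$ and $[\bphi_h]$ and uses $\Eh$ only at the end. On an interior edge whose neighbour $T_-$ need not have a boundary edge, it bounds $\|\bv_h\|_{L^2(T_\pm)}$ by $h\|\nabla\bv_h\|$ via Lemma~\ref{lem::EhError} and a norm-equivalence argument on a two-element reference patch.

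You subtract $\Eh\bv_h$ first. Its jump has zero mean on interior edges by Gauss exactness, and $\bv_h-\Eh\bv_h\equiv 0$ on affine elements, so only non-affine neighbours contribute. Subtracting a function that is quadratic along the straight edge does not change the fourth derivative, so you get the same per-element bound. Every non-affine element has a boundary edge on which $\bw_h$ vanishes at three nodes, so both \eqref{bvcbvbL2Bd} and \eqref{bvL2ConByNabv} apply on that element alone.

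Your route therefore makes the Poincar\'e step purely elementwise and dispenses with the patch argument and Lemma~\ref{lem::EhError}. The paper's route needs no analysis of how $\Eh$ acts on edges. In fact the affine neighbour also drops out of the paper's direct bound, since its fourth derivative vanishes, so your organisation exposes a simplification available there too.

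Two points to tighten.
\begin{itemize}
\item State explicitly that a non-affine element always has a boundary edge; this is what licenses the local bound. Lemma~\ref{col::H1Norm} is a global inequality and plays no role here.
\item On a curved boundary edge, your remark that $\int_e[\Eh\bv_h]\ds$ need not vanish is inaccurate. Since $\bn\,\mathrm{d}s=A_T^{-\intercal}\hbn_{\hat e}\,\mathrm{d}\hat s$ and $A_T^{-1}=\det(DF_T)\,DF_T^{-1}$ is affine, the pulled-back integrand is a cubic vanishing at both Gauss points, so the integral is exactly zero.
\end{itemize}

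More directly, on a boundary edge $\bw_T|_e=0$ by Lemma~\ref{lem:VTestimate1}. Two-point Gauss on $\hat e$ then applies to $A_T\hbphi_{\hT}\otimes A_T^{-\intercal}\hbn_{\hat e}$, whose fourth derivative is $O(h_T^2)\|\hbphi_{\hT}\|_{L^2(\hT)}$, and gives the bound at once. This is the paper's Case~1. Your last paragraph should be made precise this way, because the variation of $\bn_T$ along a curved edge must be handled together with $J_e$, not through $J_e$ alone.
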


\begin{proof}
By Theorem \ref{the::VhEqXhDirSumPhih}, there exist unique $\bw_h\in\bWh$ and $\bphi_h\in\bPhi_h$ such that $\bv_h=\bw_h+\bphi_h$. By construction, $[\bw_h]|_e$ vanishes at the endpoints and the midpoint of $e$, and $[\bphi_h]|_e$ vanishes at the two Gauss--Legendre points of $e$. This, combined with Simpson's and Gauss--Legendre's quadrature rule, shows
\begin{equation}
\label{vBIIjumpInit}
\begin{aligned}
\Big|\int_e[\bv_h] \rd s \Big|
&\leq C|e|^5\big(|[\bw_h]|_{W^{4,\infty}(e)}+|[\bphi_h]|_{W^{4,\infty}(e)}\big)\\
&\leq C\sum_{T\in \omega_e}h_T^5(|\bw_h|_{W^{4,\infty}(T)}+|\bphi_h|_{W^{4,\infty}(T)}).
\end{aligned}    
\end{equation}
For any $T \in \omega_e$, let $\bw_T = \bw_h|_T$. There exists $\hbw_{\hT} \in \hbW(\hT)$ such that $\bw_T(x)= A_T(\hx)\hbw_{\hT}(\hx)$ with $x = F_T(\hx)$ by definition. Note that $\hbw_{\hT}$ is a quadratic polynomial. Lemma \ref{lem::normH1XThXT} and equivalence of norms in $\hT$ show
\begin{equation*}
|\bw_T|_{W^{4,\infty}(T)}\leq C h_T^{-5}\sum_{j=0}^2 h_T^{4-j} |\hbw_{\hT}|_{W^{j,\infty}(\hT)}   \leq C h_T^{-3} \|\hbw_{\hT}\|_{L^2(\hT)} \leq C h_T^{-3}\|\bw_T\|_{L^2(T)}.
\end{equation*}
Similar arguments for $\bphi_h$ yield $|\bphi_h|_{W^{4,\infty}(T)}\leq C h_T^{-3}\|\bphi_h\|_{L^2(T)}$. Substituting these two estimates into \eqref{vBIIjumpInit} shows
\begin{equation}\label{vBIIjump}
\Big|\int_e[\bv_h] \rd s \Big|\leq C\sum_{T\in \omega_e}h_T^2(\|\bw_h\|_{L^2(T)}+\|\bphi_h\|_{L^2(T)}).
\end{equation}

\textbf{Case 1:} 
If $e$ is a boundary edge, $\bw_h|_e$ vanishes on both the two endpoints and the midpoint of $e$. By Lemma~\ref{lem:VTestimate1}, it holds that
\begin{equation}
\label{vBIIjump1}
\Big|\int_e[\bv_h] \rd s \Big|\leq C\sum_{T\in \omega_e}h_T^2\|\bv_h\|_{L^2(T)} \leq C \sum_{T\in \omega_e}h_T^3\|\nabla \bv_h\|_{L^2(T)} .
\end{equation}

\textbf{Case 2:}
Let $e$ is an interior edge with $e = \partial T_+ \cap \partial T_-$, where $\omega_e = \{T_+,T_-\}$. If $T_+$ and $T_-$ are affine, then $[\bw_h]|_e$ and $[\bphi_h]|_e$ are quadratic polynomials. This and \eqref{vBIIjumpInit} show $\int_{e}[\bv_h]\ds=0$, implying \eqref{I3Claim}. In the following, suppose that $T_+$ has a curved edge $e^\prime \subset \partial\Omega_h$. Since $\bw_h|_{e^\prime}=0$, it follows $\bw_h(a)=0$ with $a = e \cap e^\prime$. Using \eqref{bvcbvbL2Bd} from Lemma~\ref{lem:VTestimate1}, the first inequality in \eqref{vBIIjump1} still holds. It remains to show $\sum_{T\in \omega_e}\|\bv_h\|_{L^2(T)}\leq C\sum_{T\in \omega_e}h_T\|\nabla\bv_h\|_{L^2(T)}$. Let $\bs_h = \Eh\bv_h$. Lemma~\ref{lem::EhError} gives
\begin{equation}
\label{eq::vh2Eh}
\begin{aligned}
\sum_{T\in \omega_e}\|\bv_h\|_{L^2(T)}&\leq \sum_{T\in \omega_e}(\|\bv_h-\bs_h\|_{L^2(T)}+\|\bs_h\|_{L^2(T)})\\
&\leq \sum_{T\in \omega_e}(Ch_T^2\|\nabla\bv_h\|_{L^2(T)}+\|\bs_h\|_{L^2(T)}).  
\end{aligned}    
\end{equation}
Let $\hT_+$ with vertices $(0,0)$, $(1,0)$ and $(0,1)$ and $\hT_-$ with vertices $(0,0)$, $(0,1)$ and $(-1,0)$ be the reference triangles. There exists an isoparametric map $F_{T_\pm}:~\hT_\pm\to T_\pm$ satisfying $F_{T_+}^{-1}(e)=F_{T_-}^{-1}(e)$. Define the piecewise quadratic polynomial $\hat{\bs}_h$ on $\hT_+\cup\hT_-$ satisfying $\hat{\bs}_h|_{\hT_\pm}(\hat{x})=\bs_h|_{T_\pm}(\bx)$ with $x=F_{T_\pm}(\hat{x})$ and $\hat{x}\in \hT_\pm$. Note that $\bs_h \in \bVh^{iso}$, the mapping $\hat{\bs}_h\to \|\hat{\nabla}\hat{\bs}_h\|_{L^2(\hT_+)}+\|\hat{\nabla}\hat{\bs}_h\|_{L^2(\hT_-)}$ defines a norm. Using Lemma~\ref{lem::NormInThT} and the equivalence of norms yields
$$
\begin{aligned}
&\sum_{T\in \omega_e}\|\bs_h\|_{L^2(T)}
\leq C(h_{T_+}\|\hat{\bs}_h\|_{L^2(\hT_+)}+h_{T_-}\|\hat{\bs}_h\|_{L^2(\hT_-)})\\
& \quad \quad \leq 
C(h_{T_+}\|\hat{\nabla}\hat{\bs}_h\|_{L^2(\hT_+)}+h_{T_-}\|\hat{\nabla}\hat{\bs}_h\|_{L^2(\hT_-)})\leq C\sum_{T\in\omega_e}h_T\|\nabla\bs_h\|_{L^2(T)}.
\end{aligned}
$$
Substituting this into \eqref{eq::vh2Eh} completes the proof. 
\end{proof}
The lemma below shows the consistency error induced by the space $\bVh$. Its proof relies on the approximation property of $\bE_h$, the weak continuity property of $\bVh^{iso}$, and Lemma~\ref{I3Claim}. 
\begin{lemma}
\label{lem::nonconErrors}
Define $\bsig = \nu \nabla \bu - p \mathrm{I}$ on $\mathbb{R}^{2}$ by \eqref{extenUandP}. For all $\bv_h\in\bVh$, it holds that
\begin{equation*}
\Big|\sum_{e\in\calEh}\int_e\bsig:[\bv_h]\rd s\Big| \leq Ch^2 (\nu \|\bu\|_{H^3(\Omega)} + \|p\|_{H^2(\Omega)})\|\nabla_h \bv_h\|_{L^2(\Omega_h)}.
\end{equation*}
\end{lemma}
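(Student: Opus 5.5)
The plan is to split $\bv_h = (\bv_h - \Eh\bv_h) + \Eh\bv_h$ and estimate the edge sum separately for each part:
\[
\sum_{e\in\calEh}\int_e\bsig:[\bv_h]\ds=\sum_{e\in\calEh}\int_e\bsig:[\bv_h-\Eh\bv_h]\ds+\sum_{e\in\calEh}\int_e\bsig:[\Eh\bv_h]\ds .
\]
The second sum is the classical Crouzeix--Raviart-type consistency term for $\bVh^{iso}$. Functions in $\bVh^{iso}$ are continuous (zero) at the two Gauss--Legendre points of every interior (boundary) edge. The jump $[\Eh\bv_h]$ is quadratic in the reference variable, and on interior edges $J_e$ is constant. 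Hence $\int_e [\Eh\bv_h]\,p\ds=0$ for every $p$ with $\hat p\in P_1(\hat e)$. On a boundary edge, \eqref{pro-Je} gives $\int_e \Eh\bv_h\cdot(\Pe\bsig)\bn\ds=\int_{\hat e}\hat{\bs}_h\cdot\Phe(J_e\hat{\bsig})\hat\bn\hds$. Here $\hat{\bs}_h$ is quadratic and vanishes at the Gauss points, so this is zero by exactness of 2-point Gauss quadrature for cubics. I would check that the normal $\bn$ on curved edges causes no trouble; if it does, I would absorb $\bn$ into the weight.

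Therefore we may replace $\bsig$ by $\bsig-\Pe\bsig$ on each edge. Splitting the jump into one-sided traces, each piece has the form $\int_e (\bsig-\Pe\bsig):(\bs_T\otimes\bn_T)\ds$. For this we use Lemma~\ref{lem::EstimateNoncon} with $m=1$, $v$ the components of $\bsig$, and $\varphi$ the components of $\bs_T$ minus their mean on $T$. The subtraction is allowed because constants $c$ satisfy $\int_e c(\bsig-\Pe\bsig)\ds=0$, since constants lie in the range and the weighting is consistent. The result is $Ch_T^2|\Eh\bv_h|_{H^1(T)}\|\bsig\|_{H^2(T)}$. Summing with Cauchy--Schwarz, using $\|\nabla_h\Eh\bv_h\|\le C\|\nabla_h\bv_h\|$ from Lemma~\ref{lem::EhError}, and applying \eqref{extenUandP} bounds this part by $Ch^2(\nu\|\bu\|_{H^3}+\|p\|_{H^2})\|\nabla_h\bv_h\|$.

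The first sum is a perturbation, handled elementwise through integration by parts on each $T$. For each $T$,
\[
\sum_{e\subset\partial T}\int_e \bsig:((\bv_h-\Eh\bv_h)\otimes\bn_T)\ds=\int_T \Div\bsig\cdot(\bv_h-\Eh\bv_h)\dx+\int_T\bsig:\nabla(\bv_h-\Eh\bv_h)\dx .
\]
This route alone only yields $O(h)$, because $\|\nabla(\bv_h-\Eh\bv_h)\|_{L^2(T)}\le Ch_T\|\nabla\bv_h\|_{L^2(T)}$. So $\bsig$ must first be replaced by something with vanishing moments. This is the main obstacle. The remedy is the one used for the second sum: subtract an edge-moment-preserving quantity from $\bsig$ before integrating by parts. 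Lemma~\ref{I3Claim} handles the zeroth moment, $|\int_e[\bv_h]\ds|\le Ch^3\|\nabla\bv_h\|$, so a piecewise constant approximation $\bar\bsig_e$ contributes $\sum_e|\bar\bsig_e|\,Ch^3\|\nabla\bv_h\|_{\omega_e}$. With $|\bar\bsig_e|\le Ch_e^{-1}\|\bsig\|_{L^2(\omega_e)}$-type trace bounds, this is $O(h^2)\|\bsig\|_{H^1}$.

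The remaining term involves $\bsig-\bar\bsig_e$ on $e$. By the trace inequality and Lemma~\ref{lem::EhError}, it is bounded by
\[
\|\bsig-\bar\bsig_e\|_{L^2(e)}\,\|\bv_h-\Eh\bv_h\|_{L^2(e)}\le Ch_T^{1/2}|\bsig|_{H^1(T)}\cdot Ch_T^{3/2}\|\nabla\bv_h\|_{L^2(T)},
\]
since the trace bound gives $\|\bv_h-\Eh\bv_h\|_{L^2(e)}\le C(h_T^{-1/2}h_T^2+h_T^{1/2}h_T)\|\nabla\bv_h\|_{L^2(T)}$. The product is $O(h^2)$. Summing over edges and combining with the second sum completes the estimate.
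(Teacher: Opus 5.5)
Your proposal is correct and is essentially the paper's proof. The paper splits the edge sum into three terms:
\begin{itemize}
\item $I_1=\sum_e\int_e(\bsig-\bG_e):[\bv_h-\Eh\bv_h]\ds$, bounded by the trace inequality plus Lemma~\ref{lem::EhError};
\item $I_2=\sum_e\int_e(\bsig-\bG_e):[\Eh\bv_h]\ds$, bounded by inserting $\Pe$ via the Gauss--Legendre weak continuity of $\bVh^{iso}$ and applying Lemma~\ref{lem::EstimateNoncon} with $m=1$;
\item $I_3=\sum_e\bG_e:\int_e[\bv_h]\ds$, bounded by Lemma~\ref{I3Claim}.
\end{itemize}
These are exactly your three pieces once one notes that $\int_e[\Eh\bv_h]\ds=0$.

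The differences are minor. The paper bounds $|\bG_e|$ by $\|\bsig\|_{L^\infty}$ and gets $h^{5/2}$ for $I_3$, whereas you use a local trace bound. Your remark about absorbing the curved-edge normal into the weight addresses a point the paper passes over silently.
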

\begin{proof}
Let $\bG_e\in\mathbb{R}^{2\times2}$ be the average of $\bsig$ on $e\in\calEh$ and $\Eh\bv_h \in \bVISO$ be the interpolant of $\bv_h$ defined in \eqref{def:Eh}. A triangle inequality leads to
\begin{equation}\label{lemErr0}
\begin{aligned}
& \Big|\sum_{e\in\calEh}\int_e\bsig:[\bv_h]\rd s\Big| \leq \Big|\sum_{e\in\calEh}\int_e(\bsig-\bG_e):[\bv_h-\Eh\bv_h]\rd s\Big|\\
&\quad+\Big|\sum_{e\in\calEh}\int_e(\bsig-\bG_e):[\Eh\bv_h]\rd s\Big|+\Big|\sum_{e\in\calEh}\int_e\bG_e:[\bv_h]\ds\Big|
=: I_{1}+I_{2}+I_{3}.
\end{aligned}
\end{equation}
\textbf{Estimate for $I_1$}.
For any $T\in \omega_e$, a standard interpolation estimate and the trace theorem yield
\begin{equation*}
\label{sigGeError}
h_e^{-1}\|\bsig-\bG_e\|^2_{L^2(e)}\leq C|\bsig|_{H^1(T)} \text{ with } h_e=\operatorname{diam}(e).
\end{equation*}
This, the Cauchy--Schwarz inequality, the trace theorem, and Lemma \ref{lem::EhError} give
\begin{equation*}
\begin{aligned}
 I_{1}&\leq \Big(\sum_{e\in\calEh}h_e^{-1}\|\bsig-\bG_e)\|^2_{L^2(e)}\Big)^{1/2}\Big(\sum_{e\in\calEh}h_e\|[\bv_h-\Eh\bv_h]\|^2_{L^2(e)}\Big)^{1/2}\\
&\leq Ch^2|\bsig|_{H^1(\Omega_h)}\|\nabla\bv_h\|_{L^2(\Omega_h)}.   
\end{aligned}
\end{equation*}
\textbf{Estimate for $I_2$}.
Note that $\Eh\bv_h \in \bVh^{iso}$ implies that the jump $[\Eh\bv_h]|_e$ vanishes at the two Gauss--Legendre points for all $e \in \calEh$. Combining this with \eqref{pro-Je}--\eqref{defPhe}, one can insert the $L^2$-projection $\Pe$ and apply Lemma \ref{lem::EstimateNoncon} with $m=1$ to obtain the following estimate
\begin{equation*}
\begin{aligned}
I_{2}&= \Big| \sum_{e\in\calEh}\int_e(\bsig-\bG_e-\Pe(\bsig-\bG_e)):[\Eh\bv_h]\rd s\Big|\\
&\leq \sum_{e\in\calEh}\sum_{T\in \omega_e}Ch_T^{2}\|\bsig-\bG_e\|_{H^2(T)}|\Eh\bv_h|_{H^1(T)}\\
&\leq \sum_{e\in\calEh}\sum_{T\in \omega_e}Ch_T^{2}(\|\bsig\|_{H^2(T)}+\|\bG_e\|_{H^2(T)})(|\Eh\bv_h-\bv_h|_{H^1(T)}+|\bv_h|_{H^1(T)}).
\end{aligned}
\end{equation*}
Since $\bG_e$ is a constant on $T$ and $|\bG_e|\leq C\|\bsig\|_{L^\infty(\Omega_h)}\leq C\|\bsig\|_{H^2(\Omega_h)}$, it follows $\|\bG_e\|_{H^2(T)} =|\bG_e|\, |T|^{1/2} \leq Ch_T\|\bsig\|_{H^2(\Omega_h)}$.
This, the Cauchy–Schwarz inequality, and Lemma~\ref{lem::EhError} lead to
\begin{equation*}
\begin{aligned}
I_{2}
&\leq Ch^2\|\bsig\|_{H^2(\Omega_h)}\|\nabla\bv_h\|_{L^2(\Omega_h)}.
\end{aligned}
\end{equation*}
\textbf{Estimate for $I_3$}. 
Lemma \ref{I3Claim} and the Cauchy–Schwarz inequality lead to
\begin{equation*}
\label{lemErr3}
\begin{aligned}
I_{3}
&\leq\sum_{e\in\calEh}\Big|\bG_e:\int_e[\bv]\ds\Big|\\
&\leq C\|\bsig\|_{L^\infty(\Omega_h)}\Big(\sum_{e\in\calEh}h_e\Big)^{1/2}\Big(\sum_{e\in\calEh}h_e^{-1}\Big|\int_e[\bv]\ds\Big|^2\Big)^{1/2}\\
&\leq Ch^{5/2}\|\bsig\|_{H^2(\Omega_h)}\|\nabla\bv\|_{L^2(\Omega_h)}.
\end{aligned}
\end{equation*}   

Finally, combining the estimates for $I_1$, $I_2$, and $I_3$ with the decomposition \eqref{lemErr0} completes the proof.
\end{proof}

\subsection{Convergence Analysis}
Define the divergence-free subspace of $\bVh$ as
$$
\bZ_h \coloneqq \{ \bv_h \in \bVh:~ \Div_h \bv_h = 0  \text{ on } \Omega_h \}. 
$$
By Lemma \ref{lem:div-free} and \eqref{eqn:FEM1}, the discrete velocity solution $\bu_h \in \bVh$ can be uniquely determined by the following problem: Find $\bu_h \in \bZ_h$ such that 
$$
a_h(\bu_h, \bv_h) \coloneqq \int_{\Omega_h} \nu \nabla_h \bu_h :~\nabla_h \bv_h \dx = \int_{\Omega_h} \bft_h \cdot \bv_h \dx \text{ for all } \bv_h \in \bZ_h.
$$
Define the  dual norm on $\bZ_h$ as
$$
| \bft - \bft_h |_{\bZ_h^*} \coloneqq \sup_{\bv_h \in \bZ_h \setminus \{0\} } \frac{\int_{\Omega_h} (\bft - \bft_h) \cdot \bv_h \dx}{\|\nabla_h \bv_h \|_{L^2(\Omega_h)} }.
$$
The following theorem provides error estimates of the solution $(\bu_h, p_h) \in \bVh \times Q_h$ for solving the Stokes equation in \eqref{eqn:FEM}. 
\begin{theorem}
\label{thm:es1}
Let $(\bu,p)\in H^3(\Omega)\times H^{2}(\Omega)$ satisfy \eqref{var: continuous}. Then it holds that
\begin{align}
\label{es1:velocity}
\| \nabla \bu- \nabla_h \bu_h \|_{L^2(\Omega_h)} &\leq C h^2 (\| \bu \|_{H^3(\Omega)} + \nu^{-1}\|p\|_{H^2(\Omega)}) + C \nu^{-1} | \bft- \bft_h|_{\bZ_h^*},
\end{align}   
and
\begin{equation}
\label{es1:pressure}
\begin{aligned}
\|p-p_h\|_{L^2(\Omega_h)}\leq &C\big(\nu\| \nabla \bu - \nabla_h \bu_h \|_{L^2(\Omega_h)}+ \nu h^2\|\bu\|_{H^3(\Omega)}
\\&+h^2\|p\|_{H^2(\Omega)}+\inf_{q_h \in Q_h}\|p-q_h\|_{L^2(\Omega)}+\|\bft-\bft_h\|_{L^2(\Omega_h)}
\big).
\end{aligned}
\end{equation}
\end{theorem}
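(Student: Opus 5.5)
The velocity bound will come from a Strang-type argument restricted to the discretely divergence-free space $\bZ_h$, and the pressure bound from the inf-sup condition of Theorem~\ref{thm:inf-sup}.

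\textbf{Velocity.} Fix any $\bw_h\in\bZ_h$ and set $\bv_h=\bu_h-\bw_h\in\bZ_h$. Using the discrete equation,
\[
\nu\|\nabla_h\bv_h\|^2_{L^2(\Omega_h)}=\int_{\Omega_h}\bft_h\cdot\bv_h\dx-a_h(\bw_h,\bv_h).
\]
Write $\bft_h=\bft-(\bft-\bft_h)$ and $\bft=-\Div\bsig$ on $\Omega_h$, which is valid because of the extension \eqref{extenUandP}. Integrating by parts element-wise gives
\[
\int_{\Omega_h}\bft\cdot\bv_h\dx=\int_{\Omega_h}\bsig:\nabla_h\bv_h\dx-\sum_{e\in\calEh}\int_e\bsig:[\bv_h]\ds .
\]
Since $\Div_h\bv_h=0$ by the definition of $\bZ_h$, the pressure part of $\bsig$ drops out of the volume term. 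Hence
\[
\nu\|\nabla_h\bv_h\|^2=\nu\int_{\Omega_h}\nabla_h(\bu-\bw_h):\nabla_h\bv_h\dx-\sum_e\int_e\bsig:[\bv_h]\ds-\int_{\Omega_h}(\bft-\bft_h)\cdot\bv_h\dx .
\]
The three terms are bounded as follows.
\begin{itemize}
\item The volume term is handled by Cauchy--Schwarz.
\item The edge term is handled by Lemma~\ref{lem::nonconErrors}.
\item The source term is bounded by the dual norm $|\bft-\bft_h|_{\bZ_h^*}$.
\end{itemize}
Dividing by $\nu\|\nabla_h\bv_h\|$ and applying the triangle inequality yields
\[
\|\nabla\bu-\nabla_h\bu_h\|\le 2\inf_{\bw_h\in\bZ_h}\|\nabla_h(\bu-\bw_h)\|+Ch^2(\|\bu\|_{H^3}+\nu^{-1}\|p\|_{H^2})+\nu^{-1}|\bft-\bft_h|_{\bZ_h^*}.
\]

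The main obstacle is the best-approximation term in $\bZ_h$, which must be shown to be $O(h^2\|\bu\|_{H^3})$. I plan to build a Fortin-type interpolant directly rather than go through the inf-sup argument.
\begin{itemize}
\item Take $\bIT\bu$ from Lemma~\ref{lem:uniqueXT} element by element. These glue into a function of $\bWh$ because the nodal values are shared at edge endpoints and midpoints, and they vanish on boundary edges since $\bu=0$ at the boundary nodes on $\partial\Omega$.
\item The divergence $\Div\bIT\bu=\det(DF_T)^{-1}\hDiv\hbw$ is a $\det^{-1}$-weighted linear function. Its mean-weighted part, $\int_T\Div\bIT\bu\dx=\int_{\partial T}\bIT\bu\cdot\bn\ds$, need not vanish. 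I would therefore use Lemma~\ref{controlConst} and Lemma~\ref{BubbleProperty} as in the proof of Theorem~\ref{thm:inf-sup} to correct $\bIT\bu$ to an element of $\bZ_h$.
\item The size of the correction is controlled by $\|\Div_h\bIT\bu\|_{L^2}=\|\Div_h(\bIT\bu-\bu)\|\le Ch^2\|\bu\|_{H^3}$, using $\Div\bu=0$ together with Lemma~\ref{lem:uniqueXT}.
\item Concretely, solve for $\bz_h\in\bVh$ with $\Div_h\bz_h=\Div_h\bIT\bu$ exactly and $\|\nabla_h\bz_h\|\le C\|\Div_h\bIT\bu\|$. This works because $\det(DF_T)\Div\bIT\bu$ lies in $Q(T)$, so the two-step construction (constant part via $\bWh$, mean-zero part via bubbles) applies to it.
\end{itemize}
The remaining subtlety is the global compatibility $\int\Div\bw_h=0$, which follows from \eqref{pro-Wh:space}. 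Then $\bw_h=\bIT\bu-\bz_h\in\bZ_h$, which gives the required bound.

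\textbf{Pressure.} For any $q_h\in Q_h$, Theorem~\ref{thm:inf-sup} gives
\[
\|p_h-q_h\|\le C\sup_{\bv_h}\frac{\int(p_h-q_h)\Div_h\bv_h}{\|\nabla_h\bv_h\|}.
\]
To bound the numerator, substitute \eqref{eqn:FEM1} for the $p_h$ term. Then repeat the element-wise integration by parts above, now for general $\bv_h$, with the pressure term retained, and split it as
\[
\int (p_h - q_h)\Div_h\bv_h = \nu\int\nabla_h(\bu_h-\bu):\nabla_h\bv_h+\int(p-q_h)\Div_h\bv_h+\sum_e\int_e\bsig:[\bv_h]+\int(\bft-\bft_h)\cdot\bv_h .
\]
Bound the edge term by Lemma~\ref{lem::nonconErrors}, the source term by Lemma~\ref{col::H1Norm}, and the remaining terms by Cauchy--Schwarz. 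A final triangle inequality gives \eqref{es1:pressure}.
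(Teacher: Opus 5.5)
Your proposal is correct and essentially follows the paper's proof. It uses the same element-wise integration by parts of $\bsig=\nu\nabla\bu-p\mathrm{I}$ to reduce the consistency error to Lemma~\ref{lem::nonconErrors}, and the same inf-sup argument (with Lemma~\ref{col::H1Norm} for the source term) for the pressure. The one difference is the bound $\inf_{\bZ_h}\le C\inf_{\bVh}$: the paper cites it as a standard consequence of the inf-sup condition, while you derive it explicitly by correcting $\bIT\bu$ with the two-step construction from the proof of Theorem~\ref{thm:inf-sup}, which is valid given the global compatibility from \eqref{pro-Wh:space}.
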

\begin{proof}
From the standard theory of nonconforming and mixed finite element methods (e.g.,~\cite{brenner2008the}), the inf-sup condition of Theorem \ref{thm:inf-sup} together with the approximation property of Lemma \ref{lem:uniqueXT} gives,
\begin{equation}
\label{es1:eq1}
\begin{aligned}
& \nu \| \nabla \bu- \nabla_h \bu_h\|_{L^2(\Omega_h)} \\
&\leq  \inf_{\bv_h \in \bZ_h} \nu \| \nabla \bu - \nabla_h \bv_h \|_{L^2(\Omega_h)}+\sup_{\bv_h \in \bZ_h\setminus \{0\}} \frac{ a_h(\bu_h -\bu, \bv_h)}{ \| \nabla_h\bv_h\|_{L^2(\Omega_h)}} \\
& \leq C \inf_{\bv_h \in \bVh} \nu \| \nabla \bu - \nabla_h \bv_h \|_{L^2(\Omega_h)}+\sup_{\bv_h \in \bZ_h\setminus \{0\}} \frac{ a_h(\bu_h -\bu, \bv_h)}{ \| \nabla_h\bv_h\|_{L^2(\Omega_h)}} \\
& \leq Ch^2\nu\|\bu\|_{H^3(\Omega)}+\sup_{\bv_h \in \bZ_h\setminus \{0\}} \frac{ a_h(\bu_h -\bu, \bv_h)}{ \| \nabla_h\bv_h\|_{L^2(\Omega_h)}}.
\end{aligned} 
\end{equation}
Set $\bsig \coloneqq \nu\nabla\bu-pI$ and 
recall $-\nu \Delta \bu + \nabla p = \bft$ in $\mathbb{R}^2$. For any $\bv_h \in \bZ_h$, it holds that
\begin{equation}
\label{erreq1}
a_h(\bu_h -\bu, \bv_h) = -\int_{\Omega_h} \Div\bsig \cdot \bv_h \dx - a_h(\bu, \bv_h) + \int_{\Omega_h} (\bft_h - \bft) \cdot \bv_h \dx.    
\end{equation}
Integrating by parts element‑wise yields
\begin{equation}
\label{erreq2}
\begin{aligned}
\int_{\Omega_h} \Div\bsig \cdot \bv_h \dx= \sum_{T \in \mathcal{T}_h}  \int_{T} \Div\bsig\cdot \bv_h \dx
=\sum_{e \in \calEh} \int_{e} \bsig:[ \bv_h ]\rd s-a_h(\bu,\bv_h).
\end{aligned}    
\end{equation}
Substituting \eqref{erreq2} into \eqref{erreq1} gives
\begin{equation}\label{uError2}
a_h(\bu_h -\bu, \bv_h)=-\sum_{e\in\calEh}\int_e\bsig:[\bv_h]\rd s+\int_{\Omega_h}(\bft_h-\bft)\cdot\bv_h \dx.
\end{equation}
Thus, \eqref{es1:velocity} follows form \eqref{es1:eq1}, \eqref{uError2} and the consistency error in Lemma \ref{lem::nonconErrors}.

For any $q_h \in Q_h$, a triangle inequality and the inf-sup condition in Theorem \ref{thm:inf-sup} lead to
\begin{equation}
\label{ieq:estimateq}
\begin{aligned}
\|p-p_h\|_{L^2(\Omega_h)}&\leq\|p-q_h\|_{L^2(\Omega_h)}+\|p_h-q_h\|_{L^2(\Omega_h)}\\
&\leq \|p-q_h\|_{L^2(\Omega_h)}+\sup_{\bv_h \in\bVh\setminus\{0\}}\frac{\int_{\Omega_h}\Div_h \bv_h \,  (p_h -q_h) \dx}{\|\nabla_h \bv_h \|_{L^2(\Omega_h)}}.
\end{aligned}    
\end{equation}
Using the discrete problem \eqref{eqn:FEM1} and an integration by parts analogous to \eqref{erreq2}, one can obtain, for any $\bv_h\in\bVh$,
$$
\begin{aligned}
& \int_{\Omega_h} \Div_h \bv_h (p_h -q_h) \dx \\
& = a_h(\bu_h, \bv_h)- \int_{\Omega_h} \Div\bv_h q_h \dx-\int_{\Omega_h} (\bft_h-\bft) \cdot \bv_h \dx-\int_{\Omega_h} \bft \cdot \bv_h \dx\\
&=a_h(\bu_h-\bu,\bv_h)+\int_{\Omega_h} \Div_h\bv_h (p-q_h) \dx+\int_{\Omega_h}(\bft-\bft_h)\cdot\bv_h \dx+\sum_{e\in\calEh}\int_e\bsig:[\bv_h]\ds.
\end{aligned}
$$
Applying Lemma \ref{lem::nonconErrors} to the above equality and Lemma \ref{col::H1Norm} show
$$
\begin{aligned}
\int_{\Omega_h} \Div \bv_h &(p_h -q_h) \dx
\leq C\Big(\nu\|\nabla \bu- \nabla_h \bu_h\|_{L^2(\Omega_h)}+\|p-q_h\|_{L^2(\Omega_h)}
\\
&+h^2(\nu\|\bu\|_{H^3(\Omega)}+\|p\|_{H^2(\Omega)})+\|\bft-\bft_h\|_{L^2(\Omega_h)}\Big)\|\nabla_h \bv_h\|_{L^2(\Omega_h)}.   
\end{aligned}    
$$
Substituting this into \eqref{ieq:estimateq} completes the pressure estimate~\eqref{es1:pressure}.
\end{proof}

Assume that $\bft$ is sufficiently smooth. If $\bft_h$ is taken, for example, as its quadratic nodal (isoparametric) interpolant, then Lemma~\ref{col::H1Norm} gives 
$$| \bft- \bft_h|_{\bZ_h^*}\leq C\|\bft-\bft_h\|_{L^2(\Omega_h)}\leq Ch^{3}\|\bft\|_{H^3(\Omega)}.$$
Substituting this into Theorem~\ref{thm:es1} leads to the following velocity error estimate:
\begin{equation}
\label{es1:velocityByIntf}
\begin{aligned}
\| \nabla(u-u_h) \|_{L^2(\Omega_h)}\leq& Ch^2 \big(\| \bu \|_{H^3(\Omega)} + \nu^{-1}\|p\|_{H^2(\Omega)} + \nu^{-1}h\|\bft\|_{H^3(\Omega)}\big),
\end{aligned}    
\end{equation}
and the pressure error estimate:
\begin{equation*}
\begin{aligned}
\|p-p_h\|_{L^2(\Omega_h)}\leq& C\inf_{q\in Q_h}\|p-q\|_{L^2(\Omega_h)}\\
&+ Ch^2\big(\nu\|\bu\|_{H^3(\Omega)}+\|p\|_{H^{2}(\Omega)}+h\|\bft\|_{H^3(\Omega)}\big).
\end{aligned}    
\end{equation*}    
The presence of the terms $\nu^{-1} h^2\big(\|p\|_{H^2(\Omega)}+ h \|\bft\|_{H^3(\Omega)} \big)$ in \eqref{es1:velocityByIntf} indicates that the velocity error remains coupled to the pressure scaling and inversely proportional to the viscosity. Consequently, a direct application of the pair $(\bVh, Q_h)$ fails to provide a pressure-robust scheme, as large pressure gradients or small viscosity values may significantly degrade the accuracy of the velocity approximation.

\section{Pressure-Robust scheme}
\label{sec::pressure_robust_scheme}

This section proposes a modified scheme by replacing the Galerkin source term in \eqref{eqn:FEM1} with $\int_{\Omega_h}\bft_h\cdot\Pi_h\bv_h\dx$, where the computable approximation $\bft_h$ is introduced in~\cite[Section~6]{neilan2021divergence}. This section focuses on the construction of the velocity reconstruction operator $\Pi_h$ on curved domains to ensure pressure-robustness.

\subsection{Computable approximation $\bft_h$}
Define the rot operator $\rot\bv=\frac{\partial v_2}{\partial x_1}-\frac{\partial v_1}{\partial x_2}$ and the associated Hilbert space
$$
\bH(\rot;\Omega_h)\coloneqq\big\{\bv\in\bL^2(\Omega_h):~\rot\bv\in L^2(\Omega_h)\big\}.
$$
The following theorem comes from~\cite[Theorem 6.1]{neilan2021divergence}, which provides a computable
approximation $\bft_h$ for any $\bft\in\bH^3(\Omega)$.
\begin{theorem}
\label{the::PiY}
There exist finite element spaces $\bY_h\subset \bH(\rot;\Omega_h)$, $\Sigma_h\subset H_0^1(\Omega_h)$ with respect to the partition $\calTh$ and operators $\Pi_h^Y:~\bH^2(\Omega)\to\bY_h$ and $\Pi_h^\Sigma:H^3(\Omega)\to\Sigma_h$ such that
\begin{equation*}
\Pi_h^Y\nabla p = \nabla\Pi_h^\Sigma p \quad  \text{ for all } p\in H^3(\Omega).
\end{equation*}
Moreover, for any $\bft\in\bH^3(\Omega)$, the following estimates hold:
\begin{align}
\label{PiYEst}
\|\bft-\Pi_h^Y\bft\|_{L^2(\Omega_h)}&\leq Ch^2\|\bft\|_{H^3(\Omega)}.
\end{align}
Here, $\bft$ in the left-hand sides is $H^3$-extensions of $\bft|_{\Omega}$.
\end{theorem}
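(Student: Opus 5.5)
The plan is to build $\bY_h$ and $\Sigma_h$ as \emph{parametric} members of a discrete de Rham sequence. Each element $T\in\calTh$ carries the quadratic map $F_T$, and $\bY_h$ is transported with the covariant Piola transform rather than the contravariant $A_T$ used for the velocity. On $\hT$ take a scalar Lagrange space $\hat\Sigma(\hT)=P_k(\hT)$ and a $\bH(\rot)$-conforming Nédélec space $\hat{\bY}(\hT)$ with $\hat\nabla \hat\Sigma(\hT)\subset\hat{\bY}(\hT)$. The natural choice is $k=3$ paired with the second-kind Nédélec space of degree $2$, or $k=2$ with degree $1$, whichever gives at least the rate $h^2$. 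Define
\[
\Sigma(T)=\{\hat\sigma\circ F_T^{-1}\},\qquad \bY(T)=\{(DF_T^{-\intercal}\hat{\by})\circ F_T^{-1}\}.
\]
$\Sigma_h$ is then the continuous assembly of $\Sigma(T)$, and $\bY_h$ the assembly with tangential continuity. Interior edges are straight and $F_T$ is affine on them, and $F_{T_\pm}$ agree on shared edges, so the usual arguments give $\Sigma_h\subset H^1$ and $\bY_h\subset\bH(\rot;\Omega_h)$. The zero trace in $\Sigma_h\subset H_0^1$ should be read modulo constants/boundary data, as in~\cite{neilan2021divergence}; I would follow their convention there.

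Next I would define the interpolants elementwise through the reference degrees of freedom. $\Pi_h^\Sigma p|_T$ is the pushforward of the canonical reference interpolant of $\hat p=p\circ F_T$; for $p\in H^3$ the point values make sense. $\Pi_h^Y\bft|_T$ is the pushforward of the canonical Nédélec interpolant of $\hat{\bft}=DF_T^{\intercal}(\bft\circ F_T)$, defined by edge tangential moments and interior moments. The chain rule gives $\hat\nabla(p\circ F_T)=DF_T^{\intercal}(\nabla p\circ F_T)$, so the covariant pullback of $\nabla p$ is exactly $\hat\nabla\hat p$. The reference complex commutes, $\hat\Pi^Y\hat\nabla=\hat\nabla\hat\Pi^\Sigma$; this follows from the fundamental theorem of calculus along edges and integration by parts in the interior moments. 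Pushing forward then gives $\Pi_h^Y\nabla p=\nabla\Pi_h^\Sigma p$ on each $T$, and hence globally. Edge degrees of freedom are invariant under the pullback because tangential moments along $e=F_T(\hat e)$ equal the reference ones, so the local interpolants glue into conforming global ones.

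For \eqref{PiYEst} I would extend $\bft$ to an $H^3$ function on a neighbourhood containing $\Omega_h$, so the left-hand side makes sense on $\Omega_h\setminus\Omega$. The key step is a covariant analogue of Lemma~\ref{lem::normH1XThXT}. Since $DF_T^{\intercal}$ behaves like $h_T$ and its derivatives like $h_T^2$, one gets
\[
|\hat{\bft}|_{H^m(\hT)}\le C h_T^{m}\sum_{r\le m}|\bft|_{H^r(T)},
\qquad
\|\bft\|_{L^2(T)}\le C h_T^{0}\|\hat{\bft}\|_{L^2(\hT)}.
\]
This uses $|F_T^{-1}|_{W^{m,\infty}}\le Ch_T^{-m}$, \eqref{Prop-FT:eq1}, \eqref{Prop-FT:eq2} and Lemma~\ref{lem::NormInThT}. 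Then Bramble--Hilbert on $\hT$ gives
\[
\|\hat{\bft}-\hat\Pi^Y\hat{\bft}\|_{L^2(\hT)}\le C|\hat{\bft}|_{H^{2}(\hT)}\le Ch_T^2\|\bft\|_{H^2(T)},
\]
and summing over $T$ together with the extension bound yields $Ch^2\|\bft\|_{H^3(\Omega)}$.

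I expect the scaling estimate for the curved covariant Piola transform to be the main obstacle. Higher derivatives of $DF_T^{-\intercal}\circ F_T^{-1}$ appear through Leibniz' rule, and one has to check that every term carries the right power of $h_T$; lower-order terms with $|F_T|_{W^{2,\infty}}\le Ch_T^2$ must not destroy the rate. I would handle this exactly as in the proof of Lemma~\ref{lem::normH1XThXT}, replacing the bounds \eqref{ATBounds} by the corresponding bounds for $DF_T^{\intercal}$ and $DF_T^{-\intercal}$.
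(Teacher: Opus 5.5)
The paper gives no proof of this statement; it imports it from \cite[Theorem~6.1]{neilan2021divergence}. Your construction supplies what the citation leaves out: a parametric Lagrange space $\Sigma_h$, a Nédélec space $\bY_h$ transported by the covariant Piola map $DF_T^{-\intercal}$, and reference canonical interpolants pushed forward. This is the standard route to such a commuting pair on isoparametric meshes. Your scaling argument is sound. Since $|DF_T|_{W^{1,\infty}(\hT)}\le Ch_T^2$ and $|DF_T|_{W^{2,\infty}(\hT)}=0$, Leibniz and Lemma~\ref{lem::NormInThT} give $|\hat{\bft}|_{H^2(\hT)}\le Ch_T^2\|\bft\|_{H^2(T)}$. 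The pushforward is $L^2$-stable uniformly in $h_T$, and Bramble--Hilbert with $\bP_1\subset\hat{\bY}(\hT)$ then yields the bound, even with $\|\bft\|_{H^2}$ in place of $\|\bft\|_{H^3}$. Two points still need repair.

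\textbf{The trace condition on $\Sigma_h$.} Do not hedge on $\Sigma_h\subset H_0^1(\Omega_h)$. With a zero trace the statement is false, and ``modulo constants'' does not save it. Take $p=x_1$, so $\bft=\nabla p=(1,0)^\intercal$. If $\Pi_h^\Sigma p\in H_0^1(\Omega_h)$ (or $H_0^1+\mathbb{R}$), then $\int_{\Omega_h}\Pi_h^Y\bft\dx=\int_{\Omega_h}\nabla\Pi_h^\Sigma p\dx=0$. Hence $\|\bft-\Pi_h^Y\bft\|_{L^2(\Omega_h)}\ge|\Omega_h|^{1/2}$, which contradicts \eqref{PiYEst} as $h\to0$. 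The correct inclusion is $\Sigma_h\subset H^1(\Omega_h)$, which is exactly what your assembly produces. It is also all that Theorem~\ref{thm: robust} uses: the boundary term there vanishes because $\Pi_h\bv_h\in\bH_0(\Div;\Omega_h)$, not because $\Pi_h^\Sigma p$ vanishes.

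\textbf{The Lagrange degrees of freedom.} The commuting property depends on which Lagrange interpolant you use.
\begin{enumerate}
\item It holds for the moment-based (finite element exterior calculus) interpolant: vertex values, edge moments against $P_{k-2}(\hat e)$, and interior moments against $P_{k-3}(\hT)$.
\item It fails for the nodal interpolant at the Lagrange points. For $k=2$, the Nédélec moment $\int_{\hat e}\partial_t\hat p\,q\hds$ with $q$ linear equals $[\hat p q]$ at the endpoints minus $q'\int_{\hat e}\hat p\hds$. It is therefore reproduced only if $\int_{\hat e}\hat\Pi^\Sigma\hat p\hds=\int_{\hat e}\hat p\hds$, and Simpson's rule through the midpoint value does not give this.
\item Relatedly, the fundamental theorem of calculus covers only the constant edge moment; the higher edge moments need this integration by parts along the edge.
\end{enumerate}
Finally, both operators must act on the same extension of $p$, i.e.\ apply $\Pi_h^Y$ to $\nabla(Ep)$ on $\Omega_h$. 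Section~\ref{modifySchemeError} does this by extending $\bu$ and $p$ before defining $\bft$. With independently chosen extensions of $\nabla p$ and $p$, the identity need not hold on $\Omega_h\setminus\Omega$.
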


\subsection{Velocity reconstruction operator $\Pi_h$}
\label{ParRTSpace}
Based on the framework presented in~\cite{linke2012a,linke2014on,linke2016robust},
this subsection constructs the velocity reconstruction operator $\Pi_h$ on the curved domain $\Omega_h$. This operator~$\Pi_h$ maps the functions in $\bVh$ to the conforming-$\bH(\Div;\Omega_h)$ first-order parametric Raviart–Thomas spaces~\cite{bertrand2014first}:
\begin{equation}
\label{def:Rh}
\begin{aligned}
\bR_h\coloneqq\{\bv_h \in \bH_0(\Div;\Omega_h):~\bv_h|_T\in\bR(T) \text{ for all } T\in\calTh \},    
\end{aligned}
\end{equation}
where $\bR(T)\coloneqq\{\bv_T:~\bv_T(\bx)=A_T(\hat{x})\hbv_{\hT}(\hat{x}) \text{ with } \hbv_{\hT} \in \bP_{1}(\hT)+\hat{x} P_{1}(\hT) \text{ and }x=F_T(\hat{x})\}.$

Recall the canonical interpolation operator $\hat{\Pi}_{\hT}$ for the first-order Raviart–Thomas element, see, e.g.,~\cite{boffi2013mixed}, as
\begin{align}
\label{PihT1}
\int_{\hat{e}}(\hat{\Pi}_{\hT}\hbv_{\hT}\cdot\hat{\bn}_{\hat{e}})\hq\hds&=\int_{\hat{e}}(\hbv_{\hT}\cdot\hbn_{\hat{e}})\hq\hds \,  \text{ for all } \hat{e}\subset\partial\hat{T},~\hq\in P_{1}(\hat{e}),\\
\label{PihT2}
\int_{\hT}(\hat{\Pi}_{\hT}\hbv_{\hT})\cdot\hat{\boldsymbol{q}}\dx&=\int_{\hT}\hbv_{\hT}\cdot\hat{\boldsymbol{q}}\hdx \quad \, \text{ for all } \hat{\boldsymbol{q}} \in\bP_{0}(\hT).
\end{align}
For any $\bv_T\in H^1(T)$, let $\hbv_{\hT}(\hat{x})=A_T^{-1}(\hat{x})\bv_T(F_T(\hat{x}))$ and define the local interpolation operator $\Pi_T$ as  $\Pi_T\bv_T(\bx) = A_T(\hat{x})(\hat{\Pi}_{\hT}\hbv_{\hT})(\hat{x})$ with $x=F_T(\hat{x})$.     
For any $\bv\in\bV+\bVh$, one can define the global reconstruction operator $\Pi_h$ by $\Pi_h\bv|_T=\Pi_T(\bv|_T)$ for all $T\in\calTh$.    
The next lemma provides approximation properties of $\Pi_h$.
\begin{lemma}\label{PiVVhinWh}
For any $\bv\in\bV+\bVh$, it holds that $\Pi_h\bv\in\bR_h$.
Moreover, for all $T\in\calTh$ and $e\subset\partial T$, the following properties hold
\begin{align}
\label{ProPih::2}
& \int_e (\bv-\Pi_h\bv)\cdot\bn_e \, q\ds =0 \text{ for all } q(x)=\hat{q}(\hx)\in P_1(\hat{e}) \text{ with } x= F_T(\hx),\\
\label{ProPih::1}
& \int_T(\bv-\Pi_h\bv)\cdot( (DF_T^{-1})^\intercal \boldsymbol{q})\dx =0  \text{ for all } \boldsymbol{q} \in\bP_{0}(T), \\
\label{ProPih::3}
& |\Pi_h\bv-\bv|_{H^l(T)}\leq Ch_T^{m-l}\|\bv\|_{H^m(T)} \text{ with } m\in\{0,1,2 \}, l\in\{0,1\},l\leq m.
\end{align}
\end{lemma}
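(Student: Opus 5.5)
Everything will be pulled back to the reference triangle and the properties of the Raviart--Thomas interpolant $\hat{\Pi}_{\hT}$ will be transported through the Piola map $A_T$ using Lemma~\ref{lem:pro-AT} and Lemma~\ref{lem::normH1XThXT}.

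\textbf{Step 1: properties \eqref{ProPih::2} and \eqref{ProPih::1}.} These are local. For \eqref{ProPih::2}, take $q(x)=\hat q(\hx)$ and apply the boundary identity of Lemma~\ref{lem:pro-AT} restricted to a single edge. The edge version holds because $\bv\cdot\bn_e\,\ds=\hbv\cdot\hbn_{\hat e}\,\hds$ under the Piola transform, which follows from \eqref{pro-Je} and $\bn_e=A_T^{-\intercal}\hbn_{\hat e}/\|A_T^{-\intercal}\hbn_{\hat e}\|$. Together with the determinant factor, this turns the left side into $\int_{\hat e}(\hbv-\hat\Pi_{\hT}\hbv)\cdot\hbn_{\hat e}\hat q\hds=0$ by \eqref{PihT1}. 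For \eqref{ProPih::1}, change variables:
\[
\dx=\det(DF_T)\hdx,\qquad \bv\cdot (DF_T^{-1})^\intercal\boldsymbol q=\det(DF_T)^{-1}\hbv\cdot\boldsymbol q .
\]
The determinants cancel, so the integral becomes $\int_{\hT}(\hbv-\hat\Pi_{\hT}\hbv)\cdot\boldsymbol q\hdx=0$ by \eqref{PihT2}. Here $\boldsymbol q$ is a constant vector, and $(DF_T^{-1})^\intercal$ is evaluated at $F_T^{-1}(x)$.

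\textbf{Step 2: $\Pi_h\bv\in\bR_h$.} Locally $\Pi_T\bv\in\bR(T)$ by construction, so it remains to check normal continuity across interior edges and vanishing normal trace on $\partial\Omega_h$. Interior edges are straight and $J_e$ is constant there. By Step~1, the normal component $\Pi_T\bv\cdot\bn_e$, which maps to a $P_1$ function on $\hat e$ up to the constant $J_e$, is the $L^2(e)$-projection onto $P_1(e)$ of $\bv|_T\cdot\bn_e$.

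\begin{itemize}
\item For $\bv\in\bV$ the traces from both sides agree.
\item For $\bv=\bw_h+\bphi_h\in\bVh$, write $\Pi_h\bv=\Pi_h\bw_h+\Pi_h\bphi_h$. Since $\bWh\subset\bH_0(\Div;\Omega_h)$ by \eqref{pro-Wh:space}, $\bw_h\cdot\bn_e$ is single-valued. For $\bphi_h$, the normal trace $\hbphi\cdot\hbn_{\hat e}$ is a quadratic vanishing at the two Gauss points, so it is orthogonal to $P_1(\hat e)$. Hence the projection is zero from both sides.
\item Boundary edges are handled the same way: $\bw_h\cdot\bn=0$, the bubble part projects to zero, and $\bv\in\bV$ vanishes on $\partial\Omega_h$, assuming $\bV$ functions are understood with the extension to $\Omega_h$ in the paper's convention.
\end{itemize}

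Normal continuity of a Piola-mapped RT field then gives $\bH(\Div)$ conformity.

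\textbf{Step 3: estimate \eqref{ProPih::3}.} I expect this to be the main obstacle, because $\hat\Pi_{\hT}$ preserves $\bP_1$ but $A_T^{-1}\bv$ is not polynomial when $T$ is curved. The plan is the standard Bramble--Hilbert argument on $\hT$. First,
\[
|\bv-\Pi_T\bv|_{H^l(T)}\le Ch_T^{-l-1+1}\sum_{r\le l}\|\hbv-\hat\Pi_{\hT}\hbv\|_{H^r(\hT)}
\]
by Lemma~\ref{lem::normH1XThXT} with $p=2$. Since $\hat\Pi_{\hT}$ is bounded on $\bH^1(\hT)$ and reproduces $\bP_1$ (respectively $\bP_0$), one gets
\[
\|\hbv-\hat\Pi_{\hT}\hbv\|_{H^1(\hT)}\le C|\hbv|_{H^{m}(\hT)}
\]
for $m=1,2$. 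Mapping back with the second inequality in Lemma~\ref{lem::normH1XThXT}, and using $|A_T^{-1}|_{W^{2,\infty}}=0$ from \eqref{ATBounds}, gives $|\hbv|_{H^m(\hT)}\le Ch_T^{m}\|\bv\|_{H^m(T)}$. This yields $h_T^{m-l}$.

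The case $m=0$ needs care, since $\hat\Pi_{\hT}$ is not $L^2$-bounded. For $\bv$ in the finite-dimensional space $\bW(T)+\bPhi(T)$ one can use norm equivalence. For general $\bv$, I would instead restrict $m=0$ to $l=0$ on discrete functions, or accept $\|\cdot\|_{H^1}$-type trace bounds via the scaled trace inequality. This is the step I'd verify most carefully.
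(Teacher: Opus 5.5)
Your proposal follows the paper's proof essentially step by step. Properties \eqref{ProPih::2} and \eqref{ProPih::1} come from pulling back through the Piola identities. $\Pi_h\bv\in\bR_h$ comes from $\bWh\subset\bH_0(\Div;\Omega_h)$ together with the orthogonality of the bubble's normal trace to $P_1(\hat e)$, which the paper phrases as exactness of two-point Gauss--Legendre quadrature. Estimate \eqref{ProPih::3} comes from Bramble--Hilbert on $\hT$ combined with Lemma~\ref{lem::normH1XThXT}.

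Your hesitation about $m=0$ is justified, and it is a defect of the statement, not of your argument. For a generic $\bv\in\bV$ the bound $\|\bv-\Pi_h\bv\|_{L^2(T)}\le C\|\bv\|_{L^2(T)}$ is false: a smooth field concentrated in a layer of width $\epsilon$ along an edge has $L^2$ norm $O(\epsilon^{1/2})$ but $O(1)$ normal moments. The paper's proof simply applies the Bramble--Hilbert step with $m=0$ without comment.

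Commit to your first option. For $\bv\in\bVh$ the pullback $\hbv=A_T^{-1}(\bv|_T\circ F_T)$ lies in the fixed space $\bP_2(\hT)$, so norm equivalence gives the $m=l=0$ bound. This is also the only way that case is used, namely in \eqref{PiYDuPihErr} with $\bv_h\in\bZ_h$. Drop the trace-inequality alternative: it would leave $h_T\|\nabla\bv\|_{L^2(T)}$ on the right, so it is not an $m=0$ estimate.

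Your caveat that functions in $\bV$ need not have vanishing normal trace on $\partial\Omega_h$ is also a real point the paper glosses over. It is harmless, because $\Pi_h\bv\in\bR_h$ is only invoked for discrete $\bv$.
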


\begin{proof}
Let $\bv_T=\bv|_T$ for all $T \in \calTh$ and $\hbv_{\hT}(\hat{x})=A_T^{-1}(\hat{x})\bv_T(F_T(\hat{x}))$ with $x = F_T(\hx)$. 
By Lemma \ref{lem:pro-AT}, the definition of $\Pi_T$ and \eqref{PihT1} show
\begin{equation*}
\begin{aligned}
\int_{e} (\bv_T-\Pi_T \bv_T)\cdot\bn_e \, q\ds =
\int_{\hat{e}} (\hbv_{\hT}-\hat{\Pi}_{\hT} \hbv_{\hT})\cdot \hbn_{\hat{e}} \, \hq \hds = 0 
\end{aligned}    
\end{equation*}
for all $\hq \in P_1(\hat{e})$ and $q(x)=\hq(\hx)$ with $x=F_T(\hx)$, which proves \eqref{ProPih::2}. If $\bv \in \bV$, $\bv\cdot\bn_e$ is continuous across each interior edge and vanishes on $\partial\Omega_h$ for $\bv\in\bV$. This, \eqref{def:Rh}, and \eqref{ProPih::2} show $\Pi_h\bv\in\bR_h$ for $\bv \in \bV$. If $\bv \in \bVh$, by Theorem~\ref{the::VhEqXhDirSumPhih}, there exist $\bw_h\in\bWh$ and $\bphi_h\in\bPhi_h$ such that $\bv=\bw_h+\bphi_h$. Note that $\bw_h \in \bH_0(\Div;\Omega_h)$ and $\bphi_h$ vanishes at the two Gauss--Legendre points of each edge. This, Gauss--Legendre’s quadrature rule, \eqref{def:Rh}, and \eqref{ProPih::2} proves $\Pi_h\bv\in\bR_h$ for $\bv \in \bVh$.
Recall the definition of $A_T$ in \eqref{def: piola-AT} and $A_T^{-\intercal} = \det (DF_T) (DF_T^{-1} \circ F_T)^\intercal$. By \eqref{PihT2}, it holds that
$$
\int_T(\bv-\Pi_T\bv)\cdot( (DF_T^{-1})^\intercal \boldsymbol{q})\dx = \int_{\hT} A_T(\hbv-\hat{\Pi}_{\hT} \hbv)\cdot( A_T^{-\intercal} \hat{\boldsymbol{q}})\hdx = 0 
$$
for all $\hat{\boldsymbol{q}} \in \hat{\bP}_0(\hT)$ and $\boldsymbol{q}(x) = \hat{\boldsymbol{q}}(\hx)$ with $x=F_T(\hx)$, which proves \eqref{ProPih::1}. Finally, for any $m\in\{0,1,2\}$, the definition of $\Pi_T$ and Lemma \ref{lem::normH1XThXT} show
$$
\begin{aligned}
|\bv-\Pi_T\bv|_{H^l(T)} \leq Ch_T^{-l}\sum_{r=0}^l|\hbv - \hat{\Pi}_{\hT}\hbv|_{H^r(T)}
\leq Ch_T^{-l}|\hbv|_{H^m(T)}\leq Ch_T^{m-l}\|\bv\|_{H^{m}(T)},
\end{aligned}
$$
which completes the proof of \eqref{ProPih::3}.
\end{proof}

\begin{lemma}
\label{lem::DivPihCom}
For any $\bv_h \in\bVh$ and $q_h \in Q_h$, it holds that
\begin{equation}
\label{DivPihEqDiv}
\int_{\Omega_h} q_h \Div\Pi_h\bv_h\dx=\int_{\Omega_h} q_h \Div_h \bv_h \dx.    
\end{equation}
As a result, $\Pi_h(\bZ_h)\subset\bR_0$, where $\bR_0\coloneqq\{\bv_h \in\bR_h:~\Div \bv_h =0\}$.
\end{lemma}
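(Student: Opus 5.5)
Fix $T\in\calTh$ and work element by element, pulling everything back to $\hT$ with Lemma~\ref{lem:pro-AT}. Write $q_h|_T(x)=\hq(\hx)$ with $\hq\in P_1(\hT)$, and $\bv_h|_T=A_T\hbv$ with $\hbv=\hbw+\hbphi$. By construction $\Pi_T\bv_h|_T=A_T\hat{\Pi}_{\hT}\hbv$. The first identity in Lemma~\ref{lem:pro-AT} applies both to $\bv_h|_T$ and to $\Pi_T\bv_h|_T$, since both are Piola transforms. It gives
\begin{equation*}
\int_T q_h\Div(\bv_h-\Pi_h\bv_h)\dx=\int_{\hT}\hq\,\hDiv(\hbv-\hat{\Pi}_{\hT}\hbv)\hdx .
\end{equation*}
It therefore suffices to show that the right-hand side vanishes for every $\hq\in P_1(\hT)$. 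Note that this does not need $\int_{\Omega_h}q_h\dx=0$; the identity is local for every piecewise mapped $P_1$ function.

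Integrate by parts on $\hT$:
\begin{equation*}
\int_{\hT}\hq\,\hDiv(\hbv-\hat{\Pi}_{\hT}\hbv)\hdx=-\int_{\hT}\hat\nabla\hq\cdot(\hbv-\hat{\Pi}_{\hT}\hbv)\hdx+\int_{\partial\hT}(\hbv-\hat{\Pi}_{\hT}\hbv)\cdot\hbn\,\hq\hds .
\end{equation*}
The volume term vanishes by \eqref{PihT2}, because $\hat\nabla\hq\in\bP_0(\hT)$. The boundary term vanishes by \eqref{PihT1}, because $\hq|_{\hat e}\in P_1(\hat e)$ on each edge. Equivalently, one may cite \eqref{ProPih::2} and \eqref{ProPih::1} from Lemma~\ref{PiVVhinWh} directly on $T$. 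For that route, note that $\nabla q_h=(DF_T^{-1})^\intercal\hat\nabla\hq$ with $\hat\nabla\hq$ constant, which is exactly the weight appearing in \eqref{ProPih::1}. Summing over $T$ gives \eqref{DivPihEqDiv}.

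For the consequence, take $\bv_h\in\bZ_h$. By Lemma~\ref{PiVVhinWh}, $\Pi_h\bv_h\in\bR_h$, so it only remains to show $\Div\Pi_h\bv_h=0$. The main subtlety is that $\Div\Pi_h\bv_h$ need not lie in $Q_h$; it lies in the \emph{weighted} space. Indeed, by \eqref{pro-AT:div},
\begin{equation*}
\Div\Pi_h\bv_h|_T=\frac{1}{\det(DF_T\circ F_T^{-1})}\,\hat r\circ F_T^{-1},\qquad \hat r=\hDiv\hat{\Pi}_{\hT}\hbv\in P_1(\hT).
\end{equation*}

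To handle this, follow the proof of Lemma~\ref{lem:div-free} and apply the local identity directly, elementwise, with $q=\hat r\circ F_T^{-1}$ supported on $T$. No zero-mean constraint is needed, since the local identity holds for each $T$ separately. Because $\Div_h\bv_h=0$, the local identity gives
\begin{equation*}
0=\int_{\hT}\hat r\,\hat r\hdx ,
\end{equation*}
so $\hat r=0$ on every element. Hence $\Div\Pi_h\bv_h=0$ and $\Pi_h\bv_h\in\bR_0$.

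The key step is this last choice of test function. Testing only against $Q_h$ would not suffice because of the determinant weight. Proving the identity per element for arbitrary mapped $P_1$ functions avoids that difficulty.
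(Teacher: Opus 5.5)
Your proof is correct. The local identity is proved exactly as in the paper: pull back with Lemma~\ref{lem:pro-AT}, integrate by parts on $\hT$, remove the volume term with \eqref{PihT2} and the edge terms with \eqref{PihT1}. The difference lies in the consequence $\Pi_h(\bZ_h)\subset\bR_0$.

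The paper keeps only the summed identity, i.e.\ testing against $q_h\in Q_h$, and then appeals to the procedure of Lemma~\ref{lem:div-free}. There one takes $q_h|_T=\frac{1}{2|\tT|}\hat r_T\circ F_T^{-1}$ and must check that it lies in $Q_h$. By Remark~\ref{re:qh} this amounts to $\sum_T\int_T\Div\Pi_h\bv_h\dx=0$, which holds because $\Pi_h\bv_h\in\bR_h\subset\bH_0(\Div;\Omega_h)$.

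You instead use the fact that the identity holds elementwise for every mapped $P_1$ function. You can therefore test with $\hat r_T$ on a single element, and you need neither the weighted zero-mean condition nor the boundary condition on $\Pi_h\bv_h$. In effect you show that $\hDiv\hat{\Pi}_{\hT}\hbv$ is the $L^2(\hT)$-projection of $\hDiv\hbv$ onto $P_1(\hT)$, the familiar commuting property of the Raviart--Thomas interpolant. This is shorter and more transparent. The paper's version has the merit of working only with the global pair $(\bVh,Q_h)$, in parallel with Lemma~\ref{lem:div-free}.

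One correction to your closing remark: testing only against $Q_h$ does suffice, by exactly the argument above. The determinant weight is therefore not a real obstruction, only an extra verification.
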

\begin{proof}
Define $q_T=q_h|_T$ and $\bv_T=\bv_h|_T$ for all $T\in\calTh$.
Let $\hq_T(\hx) = q_
T(x)$ with $\hx = F_T^{-1}(x)$, which belongs to $P_1(\hT)$ by the definition of $Q_h$ in \eqref{defQinterpolate}. 
The definition of $\Pi_T$, Lemma~\ref{lem:pro-AT}, an integration by parts on $\hT$, and \eqref{PihT1}--\eqref{PihT2} show
\begin{equation*}
\int_{T} q_T \Div(\bv_T-\Pi_T\bv_T)\dx=\int_{\hT} \hq_T \Hat{\Div}(\hbv_T-\hat{\Pi}_{\hT}\hbv_T)\hdx=0.
\end{equation*}
Summing this over all elements $T \in \calTh$ proves \eqref{DivPihEqDiv}. Thus, for any $\bv_h \in\bZ_h$,  it holds that $\int_{\Omega_h}q_h \Div\Pi_h\bv_h\dx=0$  for all $q_h\in Q_h$. This and similar procedures as in Lemma \ref{lem:div-free} lead to $\Div\Pi_h\bv_h=0$, which implies $\Pi_h\bv_h\in\bR_0$. 
\end{proof}

Let $\hat{\pi}_{\hT}^0$ be the $L^2$-projection from $\bL^2(\hT)$ to $\bP_{0}(\hT)$. For any $\bv_T \in H^1(T)$, define 
\begin{equation}
\label{def:piT0}
\pi_{T}^{0}\bv_T(x)=(DF_T^{\intercal}(\hx))^{-1} \hat{\pi}_{\hT}^0 (DF_T^{\intercal}(\hx) \hbv_{\hT}(\hx))    
\end{equation}
for all $\bv_T(x)= \hbv_{\hT}(\hx)$ with $x = F_T(\hx)$. Lemma \ref{lem::NormInThT}, \eqref{Prop-FT:eq1}, the Bramble-Hilbert lemma, and Leibniz's rule give
\begin{equation*}
\begin{aligned}
\|\bv-\pi_{T}^{0}\bv\|_{L^2(T)} & \leq C h_T \|  (DF_T^{\intercal})^{-1} (DF_T^{\intercal} \hbv_{\hT}  - \hat{\pi}_{\hT}^0 (DF_T^{\intercal} \hbv_{\hT} )  )\|_{L^2(\hT)}   \\
& \leq  C h_T \| (DF_T^{\intercal} \circ F_T^{-1})^{-1} \|_{L^\infty(T)} \| DF_T^{\intercal} \hbv_{\hT}  - \hat{\pi}_{\hT}^0 (DF_T^{\intercal} \hbv_{\hT} ) \|_{L^2(\hT)} 
\\
& \leq  C h_T  h_T^{-1} |DF_T^{\intercal} \hbv_{\hT}|_{H^1(\hT)} \leq C\sum_{r=0}^{1}|DF_T|_{W^{1-r,\infty}(\hT)}|\hbv_{\hT}|_{H^{r}(\hT)}.
\end{aligned}
\end{equation*}
Further application of Lemma \ref{lem::NormInThT} and \eqref{Prop-FT:eq1} yields
\begin{equation}
\label{AppropiT0}
\|\bv-\pi_{T}^{0}\bv\|_{L^2(T)}  \leq C \sum_{r=0}^{1} h_T^{2-r} h_T^{r-1} \| \bv_T\|_{H^r(T)} \leq C h_T \| \bv_T\|_{H^1(T)}.
\end{equation}

\begin{lemma}
\label{lem:Pih3}
Let $\bv\in\bV+\bVh$. For any $\bu\in \bH^{3}(\Omega)$, the following estimate holds
\begin{align}
\label{DelatuUErrors}
\Big|\int_{\Omega_h}\Delta\bu\cdot\Pi_h\bv+\nabla\bu:\nabla_h \bv\dx\Big|&
\leq Ch^2\|\bu\|_{H^3(\Omega)}\|\nabla_h \bv\|_{L^2(\Omega_h)}.
\end{align}
\end{lemma}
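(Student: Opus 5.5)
Following the Linke-type reconstruction argument, I would split the integrand by inserting $\bv$ itself:
\begin{equation*}
\int_{\Omega_h}\Delta\bu\cdot\Pi_h\bv+\nabla\bu:\nabla_h\bv\dx
=\int_{\Omega_h}\Delta\bu\cdot(\Pi_h\bv-\bv)\dx+\Big(\int_{\Omega_h}\Delta\bu\cdot\bv+\nabla\bu:\nabla_h\bv\dx\Big).
\end{equation*}
For the second bracket, integrate by parts elementwise as in \eqref{erreq2}. This gives $\sum_{e\in\calEh}\int_e\nabla\bu:[\bv]\ds$. I would then bound this term exactly as in Lemma~\ref{lem::nonconErrors}, taking $\bsig$ to be $\nabla\bu$ with $p=0$ and $\nu=1$. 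The result is $Ch^2\|\bu\|_{H^3(\Omega)}\|\nabla_h\bv\|_{L^2(\Omega_h)}$ for $\bv\in\bVh$. For $\bv\in\bV$ the jumps vanish, and since Lemma~\ref{lem::nonconErrors} only uses linearity in $\bv$, the sum $\bV+\bVh$ is handled by applying the same argument to the $\bVh$-component of $\bv$.

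For the first term, I would use \eqref{ProPih::1} to subtract a suitable piecewise approximation of $\Delta\bu$. On each $T$, set $\boldsymbol{g}=\Delta\bu$ and use $\pi_T^0$ from \eqref{def:piT0}. By construction $\pi_T^0\boldsymbol{g}=(DF_T^\intercal)^{-1}\boldsymbol{q}$ with $\boldsymbol{q}$ a constant vector pulled back to $T$. This is exactly the test function class in \eqref{ProPih::1}, with $(DF_T^{-1})^\intercal\circ F_T=(DF_T^\intercal)^{-1}$. Hence
\begin{equation*}
\int_T\Delta\bu\cdot(\Pi_h\bv-\bv)\dx=\int_T(\Delta\bu-\pi_T^0\Delta\bu)\cdot(\Pi_h\bv-\bv)\dx .
\end{equation*}
This is bounded by Cauchy--Schwarz, \eqref{AppropiT0} and \eqref{ProPih::3} with $m=l=1$... but that only yields $Ch_T\|\Delta\bu\|_{H^1(T)}\cdot Ch_T\|\bv\|_{H^1(T)}$, which is $O(h^2)$ as needed. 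Summing over $T$ and using Lemma~\ref{col::H1Norm} to replace $\|\bv\|_{H^1}$ by $\|\nabla_h\bv\|_{L^2(\Omega_h)}$ then gives the bound. Here $\|\Delta\bu\|_{H^1}\le\|\bu\|_{H^3}$, using the extension \eqref{extenUandP}.

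The main obstacle is twofold.
\begin{enumerate}
\item I must check that \eqref{ProPih::3} with $m=1$ applies elementwise to $\bv\in\bVh$. This holds because $\bv|_T$ is smooth on $T$.
\item I must verify that Lemma~\ref{col::H1Norm} extends to $\bV+\bVh$. Alternatively, I can keep the full $H^1(T)$ norm and absorb the $L^2$ part via the Poincaré-type inequality.
\end{enumerate}
The identification of $\pi^0_T\Delta\bu$ with an admissible test function in \eqref{ProPih::1} is the key structural step, and I expect it to go through directly from the definitions.
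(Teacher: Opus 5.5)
Your proposal is essentially the paper's proof. It uses the same splitting by inserting $\int_{\Omega_h}\Delta\bu\cdot\bv\dx$, and the same treatment of the first term: \eqref{ProPih::1} with $\pi_T^0\Delta\bu$, then \eqref{AppropiT0} and \eqref{ProPih::3}. Note that \eqref{ProPih::3} is needed with $l=0$, $m=1$, not $m=l=1$ as you wrote. The jump term is likewise handled by elementwise integration by parts plus Lemma~\ref{lem::nonconErrors}.

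One caveat you share with the paper: write $\bv=\bv_1+\bv_2$ with $\bv_2$ the $\bVh$-component. Applying Lemma~\ref{lem::nonconErrors} or Lemma~\ref{col::H1Norm} only to $\bv_2$ puts $\|\nabla_h\bv_2\|_{L^2(\Omega_h)}$ on the right-hand side, not $\|\nabla_h\bv\|_{L^2(\Omega_h)}$. So the argument is really complete only for $\bv\in\bVh$, which is the only case used in Theorem~\ref{thm: robust}.
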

\begin{proof}
Adding and subtracting the term $\int_{\Omega_h}\Delta\bu\cdot\bv\dx$, one can obtain
\begin{equation}
\label{DelatUEst}
\int_{\Omega_h}\Delta\bu\cdot\Pi_h\bv+\nabla\bu:\nabla_h \bv\dx = \int_{\Omega_h}\Delta\bu\cdot(\Pi_h\bv-\bv)+(\nabla\bu:\nabla_h \bv+\Delta\bu\cdot\bv)\dx.
\end{equation}
For the first term on the right-hand side, the combination of \eqref{ProPih::1}, \eqref{def:piT0}, \eqref{AppropiT0}, and \eqref{ProPih::3} leads to $|\int_T\Delta\bu\cdot(\Pi_h\bv-\bv)\dx|=|\int_T(\Delta\bu-\pi_T^0\Delta\bu)\cdot(\Pi_h\bv-\bv)\dx|\leq Ch^2_T\|\bu\|_{H^3(T)}\|\bv\|_{H^1(T)}$. 
Summing this over all elements $T\in\calTh$, the Poincar\'e inequality and Lemma \ref{col::H1Norm} yield
\begin{equation}\label{DelatUEst1}
\begin{aligned}
\Big|\int_{\Omega_h}\Delta\bu\cdot(\Pi_h\bv-\bv)\dx\Big|
&\leq Ch^2\|\bu\|_{H^3(\Omega)}\|\nabla_h \bv\|_{L^2(\Omega_h)}.
\end{aligned}    
\end{equation}
For the second term on the right-hand side, an integration by parts gives
$
\int_{\Omega_h}\nabla\bu:\nabla_h \bv\dx=-\int_{\Omega_h}\Delta\bu\cdot\bv\dx+\sum_{e\in\calEh}\int_e\nabla\bu:[\bv]\ds.
$
It follows from the consistency error estimate in Lemma \ref{lem::nonconErrors} that 
\begin{equation}
\label{DelatUEst2}
\Big|\int_{\Omega_h}\nabla\bu:\nabla_h \bv+\Delta\bu\cdot\bv\dx\Big|
\leq Ch^2\|\bu\|_{H^3(\Omega)}\|\nabla_h \bv\|_{L^2(\Omega_h)}.
\end{equation}
Substituting \eqref{DelatUEst1} and \eqref{DelatUEst2} into \eqref{DelatUEst} completes the proof.
\end{proof}

\subsection{Modified scheme and error estimates}
\label{modifySchemeError} 
Recall the operator $\Pi_h^Y$ introduced in Theorem \ref{the::PiY} and let $\bft_h=\Pi_h^Y\bft$ in the following. 
The modified discrete scheme reads: Find $(\buR,\pR)\in\bVh\times Q_h$ such that
\begin{subequations}
\label{eqn:FEMR}
\begin{alignat}{2}
\label{eqn:FEMR1}
\int_{\Omega_h} \nu \nabla_h \buR:\nabla_h \bv_h \dx- \int_{\Omega_h}  \pR  \Div_h \bv_h  \dx & = \int_{\Omega_h} \bft_h\cdot \Pi_h\bv_h \dx &,\\
\label{eqn:FEMR2}
\int_{\Omega_h} q_h \Div_h \buR  \dx & =0 &
\end{alignat}
\end{subequations}
for all $(\bv_h,q_h)\in\bVh\times Q_h$.

\begin{theorem}
\label{thm: robust}
Let $(\bu,p)$ be the solution of \eqref{eq::Stokes} and $(\buR,\pR)\in\bVh\times Q_h$ be the solution of \eqref{eqn:FEMR}. If $(\bu,p)$ belongs to $\bH^5(\Omega)\times H^3(\Omega)$, then it holds that
$$
\|\nabla \bu- \nabla_h \buR\|_{L^2(\Omega_h)}\leq Ch^2\|\bu\|_{H^5(\Omega)}.
$$
\end{theorem}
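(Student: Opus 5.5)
The plan is to mimic the proof of Theorem~\ref{thm:es1}, but to exploit the reconstruction so that the pressure disappears from the consistency error.

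\textbf{Step 1 (reduction to the discretely divergence-free subspace).} As in Lemma~\ref{lem:div-free}, $\buR\in\bZ_h$. Testing \eqref{eqn:FEMR1} with $\bv_h\in\bZ_h$ gives $a_h(\buR,\bv_h)=\int_{\Omega_h}\bft_h\cdot\Pi_h\bv_h\dx$. Arguing exactly as in \eqref{es1:eq1}, with Lemma~\ref{lem:uniqueXT} and the inf-sup condition of Theorem~\ref{thm:inf-sup} supplying the best approximation in $\bZ_h$, we get
\[
\nu\|\nabla\bu-\nabla_h\buR\|_{L^2(\Omega_h)}\le Ch^2\nu\|\bu\|_{H^3(\Omega)}+\sup_{\bv_h\in\bZ_h\setminus\{0\}}\frac{a_h(\buR-\bu,\bv_h)}{\|\nabla_h\bv_h\|_{L^2(\Omega_h)}}.
\]

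\textbf{Step 2 (splitting the consistency error).} Take $\bft_h=\Pi_h^Y\bft$ and write $\bft=-\nu\Delta\bu+\nabla p$ on $\mathbb{R}^2$. Then
\[
a_h(\buR-\bu,\bv_h)=-\nu\int_{\Omega_h}\bigl(\Delta\bu\cdot\Pi_h\bv_h+\nabla\bu:\nabla_h\bv_h\bigr)\dx+\int_{\Omega_h}(\Pi_h^Y\bft-\bft)\cdot\Pi_h\bv_h\dx+\int_{\Omega_h}\nabla p\cdot\Pi_h\bv_h\dx .
\]
The first term is bounded by $C\nu h^2\|\bu\|_{H^3}\|\nabla_h\bv_h\|$ using Lemma~\ref{lem:Pih3}. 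For the pressure term, $\Pi_h\bv_h\in\bR_0\subset\bH_0(\Div;\Omega_h)$ by Lemma~\ref{lem::DivPihCom}. Hence an exact gradient integrates to zero against it, but $\nabla p$ enters only through $\bft$. The pressure must therefore be handled together with $\bft$, via the commuting property in Theorem~\ref{the::PiY}.

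\textbf{Step 3 (handling $\bft$ and the pressure).} Decompose $\bft_h=\Pi_h^Y\bft=-\nu\Pi_h^Y\Delta\bu+\Pi_h^Y\nabla p=-\nu\Pi_h^Y\Delta\bu+\nabla\Pi_h^\Sigma p$. Since $\Pi_h^\Sigma p\in H_0^1(\Omega_h)$ and $\Pi_h\bv_h\in\bH_0(\Div)$ is divergence-free,
\[
\int_{\Omega_h}\nabla\Pi_h^\Sigma p\cdot\Pi_h\bv_h\dx=-\int_{\Omega_h}\Pi_h^\Sigma p\,\Div\Pi_h\bv_h\dx=0 .
\]
So it is better to redo the splitting with $\bft_h$ replaced by $-\nu\Pi_h^Y\Delta\bu$. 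The consistency error then becomes
\[
-\nu\int_{\Omega_h}\bigl(\Delta\bu\cdot\Pi_h\bv_h+\nabla\bu:\nabla_h\bv_h\bigr)\dx-\nu\int_{\Omega_h}(\Pi_h^Y\Delta\bu-\Delta\bu)\cdot\Pi_h\bv_h\dx .
\]
The first piece is estimated by Lemma~\ref{lem:Pih3}. For the second, apply \eqref{PiYEst} to $\Delta\bu\in\bH^3$, which gives $Ch^2\|\bu\|_{H^5}$, together with $\|\Pi_h\bv_h\|_{L^2}\le C\|\nabla_h\bv_h\|$. This last bound follows from \eqref{ProPih::3} with $m=1,l=0$ and Lemma~\ref{col::H1Norm}. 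Every term carries the factor $\nu$, and dividing by $\nu$ gives the claimed bound.

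\textbf{Main obstacle.} The delicate point is justifying the use of linearity of $\Pi_h^Y$ and the commuting identity, which requires $p\in H^3$. The hypothesis $(\bu,p)\in\bH^5\times H^3$ supplies this. We also need $\Delta\bu\in\bH^3$ to have an $H^3$ extension controlled by $\|\bu\|_{H^5(\Omega)}$, which I would assume as in \eqref{extenUandP}. Finally, we must check that $\Pi_h^\Sigma p$ has zero trace on $\partial\Omega_h$, so that the boundary term in the integration by parts vanishes; this holds since $\Sigma_h\subset H_0^1(\Omega_h)$.
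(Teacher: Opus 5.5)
Your proposal is correct and follows essentially the same route as the paper's proof. The steps match: the Strang-type bound over $\bZ_h$, the splitting $\bft_h=-\nu\Pi_h^Y\Delta\bu+\nabla\Pi_h^\Sigma p$ in which the gradient part vanishes against $\Pi_h\bv_h\in\bR_0$, Lemma~\ref{lem:Pih3} for the remaining consistency term, and \eqref{PiYEst} applied to $\Delta\bu$. The only deviation is that you bound $\|\Pi_h\bv_h\|_{L^2(\Omega_h)}$ via \eqref{ProPih::3} with $m=1$, $l=0$ instead of the paper's $m=l=0$; this works equally well and is arguably the safer choice.
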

\begin{proof}
Similar procedures to derive \eqref{es1:eq1} in Theorem \ref{thm:es1} lead to
\begin{equation}
\label{uRHerror0}
\begin{aligned}
\nu\|\nabla \bu- \nabla_h \buR\|_{L^2(\Omega_h)}&\leq C\nu h^2\|\bu\|_{H^3(\Omega)}+\sup_{\bv_h \in\bZ_h\setminus\{0\}}\frac{a_h(\buR-\bu,\bv_h)}{\|\nabla_h \bv_h \|_{L^2(\Omega_h)}} 
\end{aligned}    
\end{equation}
with 
$$
a_h(\buR-\bu,\bv_h)
=\int_{\Omega_h}\bft_h\cdot\Pi_h\bv_h \dx -\int_{\Omega_h}\nu\nabla\bu:\nabla\bv_h \dx.
$$
Since $\bft=-\nu\Delta \bu+\nabla p$ in $\mathbb{R}^2$, Theorem \ref{the::PiY} implies $\bft_h=\Pi_h^Y\bft=-\nu\Pi_h^Y\Delta\bu+\nabla\Pi_h^\Sigma p$. For any $\bv_h \in\bZ_h$, an element-wise integration by parts and $\Pi_h\bv_h\in\bR_0$ shown in Lemma \ref{lem::DivPihCom} give
\begin{equation*}  
\int_{\Omega_h} \nabla\Pi_h^\Sigma p \cdot \Pi_h\bv_h \dx = -\int_{\Omega_h}\Pi_h^\Sigma p\Div\Pi_h\bv_h \dx + \sum_{e\in \mathcal{E}_h} \int_{e} \Pi_h^\Sigma p [\Pi_h\bv_h \cdot \bn_e] \ds =0.
\end{equation*}
This and adding and subtracting the term $-\nu \int_{\Omega_h} \Delta \bu \cdot \Pi_h \bv_h \dx$ lead to 
\begin{equation}
\label{uRHeRROR}
\begin{aligned}
a_h(\buR&-\bu,\bv_h)
= - \nu \int_{\Omega_h}\Pi_h^Y\Delta\bu \cdot\Pi_h\bv_h-\nu\nabla\bu:\nabla\bv_h \dx\\
&= -\nu\int_{\Omega_h}(\Pi_h^Y\Delta\bu-\Delta\bu)\cdot\Pi_h\bv_h+(\Delta\bu\cdot\Pi_h\bv_h+\nabla\bu:\nabla\bv_h)\dx.
\end{aligned}    
\end{equation}
For the first term on the right-hand side, the Cauchy-Schwarz inequality, \eqref{PiYEst}, \eqref{ProPih::3} with $m=l=0$ and Lemma \ref{col::H1Norm} show 
\begin{equation}
\label{PiYDuPihErr}
\Big|\int_{\Omega_h}(\Pi_h^Y\Delta\bu-\Delta\bu)\cdot\Pi_h\bv_h \Big|\leq C h^2\|\Delta\bu\|_{H^3(\Omega)}\|\nabla_h \bv_h\|_{L^2(\Omega_h)}.
\end{equation}
The combination of Lemma \ref{lem:Pih3} and \eqref{uRHerror0}--\eqref{PiYDuPihErr} completes the proof.
\end{proof}

\begin{remark}
\label{remark:uh0}
Similar to~\cite[Remark 3.7]{linke2016robust}, one can show: If $\bft_h= \nabla \psi$ with $\psi \in H^1(\Omega)$, then the solution $\bu_h^R$ of the modified problem \eqref{eqn:FEM} satisfies
$$
\nu \int_{\Omega_h} \nabla_h \bu_h^R :\nabla_h \bv_h \dx =  \int_{\Omega_h} \nabla \psi \cdot \Pi_h \bv_h \dx  =  - \int_{\Omega_h} \psi \Div  \Pi_h \bv_h \dx = 0
$$
for all $\bv_h  \in \bZ_h$,
which directly implies $\bu_h^R = \mathbf{0}$.  
\end{remark}

\section{Numerical Examples}
\label{sec::numerical_examples}

This section conducts two numerical experiments to validate the convergence, element-wise divergence-free property, and pressure-robustness of the modified scheme \eqref{eqn:FEMR}. 
For comparison, the numerical results of the standard scheme \eqref{eqn:FEM} are provided. In the following, let $\Omega=\{(x_1,x_2)^\intercal \in\mathbb{R}^2:~x_1^2+x_2^2<1\}$ and $\bft_h=\Pi_h^Y\bft$. For ease of presentation in the tables, this section uses $\|\bullet\|_0$ to denote $\|\bullet\|_{L^2(\Omega_h)}$.

\subsection{No flow problem}
Consider the Stokes problem \eqref{eq::Stokes} with $\nu=1$. The right-hand side of \eqref{eq::Stokes} is given by the gradient force $\bft=\nabla \psi$ with $\psi=2x_1^2(1-x_1)x_2(1-x_2)$.
The solution of \eqref{eq::Stokes} is given by
$$
\bu=\mathbf{0},\quad p=\psi+\frac{1}{12},
$$
where the constant is added to ensure $p\in L^2_0(\Omega)$.

Table~\ref{tab:no_flow_compare} presents the errors and convergence rates for the standard and modified schemes together with the $L^2$-norm of piecewise-divergence for the velocity. It can be seen that the convergence rates of the standard scheme are
$$
\|\bu-\bu_h\|_{0}=\mathcal{O}(h^3), \, \|\nabla \bu- \nabla_h \bu_h\|_{0}=\mathcal{O}(h^2), \, \|p-p_h\|_{0}=\mathcal{O}(h^2).
$$
which agree with the theoretical results stated in Theorem \ref{thm:es1}. Note that the velocity field of the standard scheme does not vanish since the fundamental invariance condition is violated, see Remark~\ref{remark:uh0}.
In contrast,  the modified scheme generates the
expected zero flow field, which aligns with Theorem~\ref{thm: robust}.
Moreover, even though both schemes yield divergence-free discrete velocities, the modified scheme enforces this constraint better at the numerical level.

\begin{table}[ht]
\centering
\setlength{\tabcolsep}{3pt} {
\begin{tabular}{@{} w{c}{0.6cm} *{3}{w{c}{2.2cm} w{c}{0.8cm}} w{c}{1.8cm} @{}}
\toprule
& \multicolumn{7}{c}{Standard scheme} \\
\cmidrule(lr){2-8}
$1/h$& {$\|\bu-\bu_h\|_{0}$} & {rate} & {$\|\nabla \bu- \nabla_h \bu_h\|_{0}$} & {rate} & {$\|p-p_h\|_{0}$} & {rate} & {$\|\Div_h \bu_h\|_{0}$}\\
\midrule 
4  & 5.164E-05 & ---  & 1.937E-03 & ---  & 1.034E-02 & ---  & 1.317E-18 \\
8  & 5.953E-06 & 3.12 & 4.158E-04 & 2.22 & 2.591E-03 & 2.00 & 6.329E-19 \\
16 & 7.043E-07 & 3.08 & 9.482E-05 & 2.13 & 6.450E-04 & 2.01 & 3.119E-19 \\
32 & 8.544E-08 & 3.04 & 2.262E-05 & 2.07 & 1.606E-04 & 2.01 & 1.654E-19 \\
& \multicolumn{7}{c}{Modified scheme} \\
\cmidrule(lr){2-8}
$1/h$& {$\|\bu-\bu_h^R\|_{0}$} & {rate} & {$\|\nabla \bu- \nabla_h \bu_h^R\|_{0}$} & {rate} & {$\|p-p_h^R\|_{0}$} & {rate} & {$\|\Div_h \bu_h^R\|_{0}$}\\
\midrule
4  & 2.352E-17 & --- & 4.380E-16 & --- & 8.707E-03 & --- & 2.575E-31 \\
8  & 1.491E-17 & --- & 3.418E-16 & --- & 2.160E-03 & 2.01 & 4.660E-31 \\
16 & 1.105E-17 & --- & 4.777E-16 & --- & 5.358E-04 & 2.01 & 1.416E-30 \\
32 & 1.854E-17 & --- & 9.147E-16 & --- & 1.332E-04 & 2.01 & 5.215E-30 \\
\bottomrule
\end{tabular}}
\caption{No flow problem, $\nu=1$.}
\label{tab:no_flow_compare}
\end{table}			

\subsection{Problem with flow field}
Consider the Stokes problem \eqref{eq::Stokes} with the  solution given by
$$
\bu=\begin{pmatrix}
    \partial_{x_2}\psi\\
    -\partial_{x_1}\psi
\end{pmatrix} \text{ where }
\psi=\frac{1}{100}x_1^2(1-x_1)^2x_2^2(1-x_2)^2, 
p=2x_1^2(1-x_1)x_2(1-x_2)+\frac{1}{12}.
$$
The right-hand side term $\bft$ is obtained by substituting this solution into \eqref{eq::Stokes}.

Table~\ref{tab:with_flow_nu1_comparison} and Table~\ref{tab:with_flow_nu1_7_comparison} demonstrate the errors and convergence rates for the standard and modified
schemes with $\nu=1$ and $\nu=10^{-7}$, receptively. Optimal convergence rates of the velocity and pressure are achieved for these two schemes. However, the errors of $\|\bu-\bu_h\|_0$ and $\|\nabla \bu-\nabla_h\bu_h\|_0$ in Table~\ref{tab:with_flow_nu1_comparison} are much larger compared to the corresponding results in Table~\ref{tab:with_flow_nu1_7_comparison}, by roughly a factor of $\nu^{-1}$. In contrast, the errors produced by the modified scheme remain identical for $\nu=1$ and $\nu=10^{-7}$, confirming its pressure-robustness. This observation is further verified by numerical results shown in Table~\ref{tab:with_flow_diff_nu_comparison}, which presents the errors for the standard and modified schemes as $\nu$ varies from $1$ to $10^{-9}$ on a fixed mesh with $1/h=16$. Notably, the modified scheme preserves the divergence-free property more robustly compared to the standard scheme. 
\begin{table}[ht]
\centering
\setlength{\tabcolsep}{3pt} {
\begin{tabular}{@{} w{c}{0.6cm} *{3}{w{c}{2.2cm} w{c}{0.8cm}} w{c}{1.8cm} @{}}
\toprule
& \multicolumn{7}{c}{Standard scheme} \\
\cmidrule(lr){2-8}
$1/h$& {$\|\bu-\bu_h\|_{0}$} & {rate} & {$\|\nabla \bu- \nabla_h \bu_h\|_{0}$} & {rate} & {$\|p-p_h\|_{0}$} & {rate} & {$\|\Div_h \bu_h\|_{0}$}\\
\midrule 
4 & 5.861E-05 & --- & 2.203E-03 & --- & 1.052E-02 & --- & 9.045E-15  \\
8 & 6.862E-06 & 3.09 & 4.865E-04 & 2.18 & 2.635E-03 & 2.00 & 6.548E-16   \\
16 & 8.203E-07 & 3.06 & 1.129E-04 & 2.11 & 6.559E-04 & 2.01 & 5.471E-16   \\
32 & 1.000E-07 & 3.04 & 2.717E-05 & 2.06 & 1.634E-04 & 2.01 & 1.099E-15    \\
& \multicolumn{7}{c}{Modified scheme} \\
\cmidrule(lr){2-8}
$1/h$& {$\|\bu-\bu_h^R\|_{0}$} & {rate} & {$\|\nabla \bu- \nabla_h \bu_h^R\|_{0}$} & {rate} & {$\|p-p_h^R\|_{0}$} & {rate} & {$\|\Div_h \bu_h^R\|_{0}$}\\
\midrule
4 & 2.900E-05 & --- & 1.055E-03 & --- & 8.755E-03 & --- & 9.043E-15  \\
8 & 3.529E-06 & 3.04 & 2.529E-04 & 2.06 & 2.172E-03 & 2.01 & 6.564E-16   \\
16 & 4.335E-07 & 3.02 & 6.126E-05 & 2.05 & 5.386E-04 & 2.01 & 5.500E-16   \\
32 & 5.366E-08 & 3.01 & 1.503E-05 & 2.03 & 1.339E-04 & 2.01 & 1.100E-15    \\
\bottomrule
\end{tabular}}
\caption{Problem with flow, $\nu=1$.}
\label{tab:with_flow_nu1_comparison}
\end{table}	

\begin{table}[ht]
\centering
\setlength{\tabcolsep}{3pt} {
\begin{tabular}{@{} w{c}{0.6cm} *{3}{w{c}{2.2cm} w{c}{0.8cm}} w{c}{1.8cm} @{}}
\toprule
& \multicolumn{7}{c}{Standard scheme} \\
\cmidrule(lr){2-8}
$1/h$& {$\|\bu-\bu_h\|_{0}$} & {rate} & {$\|\nabla \bu- \nabla_h \bu_h\|_{0}$} & {rate} & {$\|p-p_h\|_{0}$} & {rate} & {$\|\Div_h \bu_h\|_{0}$}\\
\midrule 
4 & 5.164E+02 & --- & 1.937E+04 & --- & 1.034E-02 & --- & 1.313E-11  \\
8 & 5.953E+01 & 3.12 & 4.158E+03 & 2.22 & 2.591E-03 & 2.00 & 6.284E-12  \\
16 & 7.043E+00 & 3.08 & 9.482E+02 & 2.13 & 6.450E-04 & 2.01 & 3.155E-12 \\
32 & 8.544E-01 & 3.04 & 2.262E+02 & 2.07 & 1.606E-04 & 2.01 & 1.650E-12  \\
& \multicolumn{7}{c}{Modified scheme} \\
\cmidrule(lr){2-8}
$1/h$& {$\|\bu-\bu_h^R\|_{0}$} & {rate} & {$\|\nabla \bu- \nabla_h \bu_h^R\|_{0}$} & {rate} & {$\|p-p_h^R\|_{0}$} & {rate} & {$\|\Div_h \bu_h^R\|_{0}$}\\
\midrule
4 & 2.900E-05 & --- & 1.055E-03 & --- & 8.707E-03 & --- & 9.044E-15 \\
8 & 3.529E-06 & 3.04 & 2.529E-04 & 2.06 & 2.160E-03 & 2.01 & 6.533E-16 \\
16 & 4.335E-07 & 3.02 & 6.126E-05 & 2.05 & 5.358E-04 & 2.01 & 5.508E-16 \\
32 & 5.366E-08 & 3.01 & 1.503E-05 & 2.03 & 1.332E-04 & 2.01 & 1.102E-15 \\
\bottomrule
\end{tabular}}
\caption{Problem with flow, $\nu=10^{-7}$.}
\label{tab:with_flow_nu1_7_comparison}
\end{table}	

\begin{table}[ht]
\centering
\setlength{\tabcolsep}{3pt} {
\begin{tabular}{@{} w{c}{0.6cm} *{4}{w{c}{2.5cm}} @{}}
\toprule
& \multicolumn{4}{c}{Standard scheme} \\
\cmidrule(lr){2-5}
$\nu$& {$\|\bu-\bu_h\|_{0}$}  & {$\|\nabla \bu- \nabla_h \bu_h\|_{0}$} & {$\|p-p_h\|_{0}$}  & {$\|\Div_h \bu_h\|_{0}$}\\
\midrule 
$10^{-0}$ & 8.203E-07 & 1.129E-04 & 6.559E-04 & 5.471E-16 \\ 
$10^{-1}$ & 7.047E-06 & 9.505E-04 & 6.459E-04 & 5.501E-16 \\ 
$10^{-2}$ & 7.042E-05 & 9.483E-03 & 6.451E-04 & 5.543E-16 \\ 
$10^{-3}$ & 7.043E-04 & 9.483E-02 & 6.450E-04 & 6.481E-16 \\ 
$10^{-4}$ & 7.043E-03 & 9.482E-01 & 6.450E-04 & 3.213E-15 \\ 
$10^{-5}$ & 7.043E-02 & 9.482E+00 & 6.450E-04 & 3.139E-14 \\ 
$10^{-6}$ & 7.043E-01 & 9.482E+01 & 6.450E-04 & 3.127E-13 \\ 
$10^{-7}$ & 7.043E+00 & 9.482E+02 & 6.450E-04 & 3.155E-12 \\ 
$10^{-8}$ & 7.043E+01 & 9.482E+03 & 6.450E-04 & 3.121E-11 \\ 
$10^{-9}$ & 7.043E+02 & 9.482E+04 & 6.450E-04 & 3.158E-10 \\
& \multicolumn{4}{c}{Modified scheme} \\
\cmidrule(lr){2-5}
$\nu$& {$\|\bu-\bu_h^R\|_{0}$}  & {$\|\nabla \bu- \nabla_h \bu_h^R\|_{0}$}  & {$\|p-p_h^R\|_{0}$}  & {$\|\Div_h \bu_h^R\|_{0}$}\\
\midrule
$10^{-0}$ & 4.335E-07 & 6.126E-05 & 5.386E-04 & 5.500E-16 \\ 
$10^{-1}$ & 4.335E-07 & 6.126E-05 & 5.358E-04 & 5.503E-16 \\ 
$10^{-2}$ & 4.335E-07 & 6.126E-05 & 5.358E-04 & 5.497E-16 \\ 
$10^{-3}$ & 4.335E-07 & 6.126E-05 & 5.358E-04 & 5.515E-16 \\ 
$10^{-4}$ & 4.335E-07 & 6.126E-05 & 5.358E-04 & 5.484E-16 \\ 
$10^{-5}$ & 4.335E-07 & 6.126E-05 & 5.358E-04 & 5.506E-16 \\ 
$10^{-6}$ & 4.335E-07 & 6.126E-05 & 5.358E-04 & 5.474E-16 \\ 
$10^{-7}$ & 4.335E-07 & 6.126E-05 & 5.358E-04 & 5.508E-16 \\ 
$10^{-8}$ & 4.335E-07 & 6.126E-05 & 5.358E-04 & 5.519E-16 \\ 
$10^{-9}$ & 4.335E-07 & 6.127E-05 & 5.358E-04 & 5.501E-16 \\ 
\bottomrule
\end{tabular}}
\caption{Problem with flow, mesh size $1/h=16$.}
\label{tab:with_flow_diff_nu_comparison}
\end{table}	

\section*{Acknowledgment}
\appendix

Thanks to Professor Dietmar Gallistl (University of Jena) and Professor Rui Ma (Beijing Institute of Technology) for their valuable comments on the initial draft of this paper.

\appendix
\section{Proofs of preliminary results}
\label{sec::appendix}

\subsection{Proof of Lemma \ref{lem:VTestimate1}}
\label{app::Prooflem:VTestimate1}

\begin{proof}
By the definitions of $\bW(T)$ and $\bPhi(T)$ in \eqref{def:WandPhi}, there exist $\hbw_{\hT} \in \hbW(\hT)$ and  $\hbphi_{\hT} \in \hbPhi(\hT)$ such that 
$\bw_T(\bx) = A_T(\hat{x}) \hbw_{\hT}(\hat{x})$ and $\bphi_T(\bx) = A_T(\hat{x}) \hbphi_{\hT}(\hat{x}) \text{ with } x = F_T(\hat{x})$, respectively.  
Let $\hbv_{\hT} = \hbw_{\hT} + \hbphi_{\hT}$, then $\bv_T = \bw_T + \bphi_T = A_T \hbv_{\hT}$. Since $A_T$ is invertible, it follows $\hbw_{\hT}(\hat{a}) = A_T^{-1}(\hat{a}) \bw_T(\ba) =0$ with $\hat{a} = F_T^{-1}(\ba)$. This gives 
\begin{equation}\label{InequalityNorm}
\|\hbw_{\hT}\|_{L^2(\hT)}+\|\hbphi_{\hT}\|_{L^2(\hT)}\leq C\|\hbv_{\hT}\|_{L^2(\hT)}.
\end{equation}
Assume that \eqref{InequalityNorm} is not valid. Then, there exist sequences $\hbw_{n,{\hT}}\in\hbW(\hT)$ with $\hbw_{n,{\hT}}(\hat{a})=0$, $\hbphi_{n,{\hT}}\in\hbPhi(\hT)$ and $\hbv_{n,\hT}=\hbw_{n,{\hT}}+\hbphi_{n,{\hT}}$ such that 
$$
\|\hbw_{n,{\hT}}\|_{L^2(\hT)}+\|\hbphi_{n,{\hT}}\|_{L^2(\hT)}=1 \text{ and }\lim_{n \rightarrow \infty} \|\hbv_{n, \hT} \|_{L^2(\hT)} = 0.
$$
Since $\hbW(\hT)$ and $\hbPhi(\hT)$ are finite-dimensional spaces, there exist subsequences $\hbw_{n_k,{\hT}}$ and $\hbphi_{n_k,{\hT}}$ converge to $\hbw_{*,\hT}$ and $\hbphi_{*,\hT}$, respectively. Let $\hbv_{*,\hT}=\hbw_{*,\hT}+\hbphi_{*,\hT}$. Then 

$$
\hbw_{*,\hT}(\hat{a})=0,  \, \|\hbw_{*,\hT}\|_{L^2(\hT)}+\|\hbphi_{*,\hT}\|_{L^2(\hT)}=1 \text{  and  }
\|\hbv_{*,\hT}\|_{L^2(\hT)}=0.
$$
This gives $\hbv_{*,\hT}=0$, which yields $\hbphi_{*,\hT}(\hat{a}) = \hbv_{*,\hT}(\hat{a})-\hbw_{*,\hT}(\hat{a})=0$.
\eqref{def:hbPhi} implies $\hbphi_{*,\hT}=0$ and thus $\hbw_{*,\hT}=0$, which contradicts the fact that $\|\hbw_{*,\hT}\|_{L^2(\hat{T})} + \|\hbphi_{*,\hT}\|_{L^2(\hat{T})} = 1$. 
Therefore, inequality \eqref{InequalityNorm} holds. This and Lemma \ref{lem::normH1XThXT} lead to
$$
\|\bw_T\|_{L^2(T)} + \|\bphi_T\|_{L^2(T)}
\leq C(\|\hbw_{\hT}\|_{L^2(\hT)}+\|\hbphi_
{\hT}\|_{L^2(\hT)})\leq C\|\hbv_{\hT}\|_{L^2(\hT)} \leq C\|\bv_T\|_{L^2(T)}.
$$

If $\bw_T$ vanishes at the two endpoints and the midpoint of an edge $e$ of $T$, then $\hbw_{\hT}(\hx) = A_T^{-1} \bw(x)$ with $x=F_T(\hx)$ vanishes the two endpoints and the midpoint of an edge $\hat{e}$ of $\hT$ with $e = F_T(\hat{e})$.
Since $\hbw_{\hT}$ is a quadratic polynomial on $\hat{e}$, this implies $\hbw_{\hT}|_{\hat{e}}=0$, thus $\bw_T|_{e}=0$. 
Claim that $\hbv_{\hT}\to\|\hat{\nabla} \hbv_{\hT} \|_{L^2(\hT)}$ is a norm. If $\|\hat{\nabla}\hbv_{\hT}\|_{L^2(\hT)}=0$, then $\hbv_{\hT}\equiv c$. Since $ \hbphi_{\hT}|_{\hat{e}} = \hbv_{\hT}|_{\hat{e}} - \hbw_{\hT}|_{\hat{e}} \equiv c$ and $\hbphi_{\hT}$ vanishes on the two Gauss--Legendre points on $\hat{e}$, it follows $c =0$. Combining this result with the equivalence of norms and Lemma~\ref{lem::normH1XThXT} can obtain
$$
\|\bv_T\|_{L^2(T)}\leq C\|\hbv_{\hT}\|_{L^2(\hT)}\leq C\|\hat{\nabla}\hbv_T\|_{L^2(\hT)}\leq Ch_T\big(\|\bv_T\|_{L^2(T)}+\|\nabla\bv_T\|_{L^2(T)}\big).
$$
Therefore, for sufficiently small $h_T$ satisfying $Ch_T\leq \frac{1}{2}$,  inequality \eqref{bvL2ConByNabv} holds.
\end{proof}

\subsection{Proof of Lemma \ref{lem::EstimateNoncon}}
\label{app::Prooflem::EstimateNoncon}

\begin{proof}
Let $\hat{\varphi}(\hx)=\varphi(x)$ and $\hat{v}(\hx) = v(x)$ with $\hx = F_T^{-1}(x)$. Recall $\Je(\hat{x})=\|A_T^{-\intercal}(\hat{x})\hat \bn_{\hat{e}}\|_{\mathbb{R}^2}$ for $\hx \in  \hT$. It follows from \eqref{pro-Je} and the definition of $\Pe$ that
\begin{equation}\label{ErrorPe1}
\int_e\varphi(v-\Pe v)\ds=\int_{\hat{e}}\hat{\varphi}\big(\Je\hat{v}-\Phe(\Je\hat{v})\big)\hds.
\end{equation}
Fix $\hat{v}\in H^{m+1}(\hT)$ and consider the linear functional
$
\hat{\varphi}\to\int_{\hat{e}}\hat{\varphi}\big(\Je\hat{v}-\Phe(\Je\hat{v})\big)\hds
$
on $H^1(\hT)$. It is continuous with norm bounded by $\|\Je\hat{v}-\Phe(\Je\hat{v})\|_{L^2(\hat{e})}$ and vanishes on constants $P_0$ by~\eqref{defPhe}. Applying the Bramble-Hilbert lemma gives
\begin{equation}\label{ErrorPe2}
\left|\int_{\hat{e}}\hat{\varphi}\big(\Je\hat{v}-\Phe(\Je\hat{v})\big)\hds\right|\leq C |\hat{\varphi}|_{H^1(\hT)}\|\Je\hat{v}-\Phe(\Je\hat{v})\|_{L^2(\hT)}.
\end{equation}
Since $\Phe \hat{v}=\hat{v}$ for all $\hat{v}\in P_1(\hat{e})$, the Bramble-Hilbert lemma implies
\begin{equation}
\label{ErrorPe3}
\|\Je\hat{v}-\Phe(\Je\hat{v})\|_{L^2(\hT)}\leq C|\Je\hat{v}|_{H^{m+1}(\hT)}\leq C\sum_{j=0}^{m+1}|\Je|_{W^{j,\infty}(\hT)}|\hat{v}|_{H^{m+1-j}(\hT)}.
\end{equation}
Based on the estimate of $A_T$ in \eqref{ATBounds}, simple calculations show
\begin{equation}
\label{JeBounds}
|\Je|_{W^{j,\infty}(\hT)}\leq Ch_T^{j+1} \text{ with }  j=0,1,2. 
\end{equation}
The equation \eqref{ErrorPe1}, combined with \eqref{ErrorPe2}--\eqref{JeBounds}, shows
\begin{equation*}
\left|\int_e\varphi(v-\Pe v)\ds\right|\leq C|\hat{\varphi}|_{H^1(\hT)}\sum_{j=0}^{m+1}h_T^{j+1}|\hat{v}|_{H^{m+1-j}(\hT)}.
\end{equation*}
Using the results $|\hat{v}|_{H^j(\hT)}\leq Ch_T^{j-1}\|v\|_{H^j(T)}$ provided by Lemma \ref{lem::NormInThT}, one can obtain
$$
\begin{aligned}
\left|\int_e\varphi(v-\Pe v)\ds\right|&\leq C|\varphi|_{H^1(T)}\sum_{j=0}^{m+1}h_T^{j+1}h_T^{m+1-j-1}\|v\|_{H^{m+1-j}(T)}\\
&\leq Ch_T^{m+1}|\varphi|_{H^1(T)}\|v\|_{H^{m+1}(T)},
\end{aligned}
$$
which completes the proof.
\end{proof}

\bibliographystyle{abbrv}
\bibliography{evp}

\end{document}

%% file: abbr.tex
\newcommand{\bu}{\boldsymbol{u}}
\newcommand{\bH}{\boldsymbol{H}}
\newcommand{\bv}{\boldsymbol{v}}
\newcommand{\bV}{\boldsymbol{V}}
\newcommand{\bVISO}{\bV^{iso}_h}
\newcommand{\bx}{x}

\newcommand{\bP}{\boldsymbol{P}}
\newcommand{\bPhi}{\boldsymbol{\Phi}}
\newcommand{\bphi}{\boldsymbol{\phi}}
\newcommand{\bsig}{\boldsymbol{\sigma}}
\newcommand{\bw}{\boldsymbol{w}}

\newcommand{\bs}{\boldsymbol{s}}
\newcommand{\bn}{\boldsymbol{n}}
\newcommand{\hbn}{\hat{\bn}}
\newcommand{\bL}{\boldsymbol{L}}
\newcommand{\bZ}{\boldsymbol{Z}}
\newcommand{\bG}{\boldsymbol{G}}
\newcommand{\bft}{\boldsymbol{f}}
\newcommand{\ba}{a}

\newcommand{\bb}{\boldsymbol{b}}

\newcommand{\bI}{\boldsymbol{I}}
\newcommand{\bE}{\boldsymbol{E}}
\newcommand{\bW}{\boldsymbol{W}}
\newcommand{\bR}{\boldsymbol{R}}
\newcommand{\bY}{\boldsymbol{Y}}
\newcommand{\buR}{\boldsymbol{u}_{h}^{R}}
\newcommand{\pR}{p_{h}^R}
\newcommand{\bIT}{\bI^W_T}

\newcommand{\bWh}{\bW_{\!\!h}}
\newcommand{\bVh}{\bV_{\!\!h}}
\newcommand{\Gh}{G_h}
\newcommand{\dx}{\,\rd x}
\newcommand{\hdx}{\,\rd \hat{x}}
\newcommand{\ds}{\,\rd s}
\newcommand{\hds}{\,\rd\hat{s}}
\newcommand{\Div}{\operatorname{div}}
\newcommand{\hDiv}{\hat{\operatorname{div}}}

\newcommand{\tDiv}{\tilde{\operatorname{div}}}
\newcommand{\tnabla}{\tilde{\nabla}}
\newcommand{\Eh}{\boldsymbol{E}_h}

\newcommand{\Je}{J_e}
\newcommand{\Pe}{\pi_e^1}
\newcommand{\Phe}{\pi_{\hat{e}}^1}

\newcommand{\calTh}{\mathcal{T}_h}
\newcommand{\calEh}{\mathcal{E}_h}

\newcommand{\NT}{\mathcal{N}_T}
\newcommand{\GT}{\mathcal{G}_T}

\newcommand{\tT}{\tilde{T}}
\newcommand{\tq}{\tilde{q}}
\newcommand{\tQ}{\tilde{Q}}
\newcommand{\tbv}{\tilde{\bv}}
\newcommand{\tbw}{\tilde{\bw}}
\newcommand{\tbV}{\tilde{\bV}}
\newcommand{\tbx}{\tilde{\bx}}

\newcommand{\tbW}{\tilde{\bW}}
\newcommand{\tbPhi}{\tilde{\bPhi}}
\newcommand{\tbphi}{\tilde{\bphi}}
\newcommand{\tOmega}{\tilde{\Omega}}
\newcommand{\tcalTh}{\tilde{\mathcal{T}}_h}
\newcommand{\tba}{\tilde{a}}

\newcommand{\tNT}{\mathcal{N}_{\tT}}
\newcommand{\tGT}{\mathcal{G}_{\tT}}

\newcommand{\hx}{\hat{x}}
\newcommand{\hT}{\hat{T}}
\newcommand{\hq}{\hat{q}}
\newcommand{\hQ}{\hat{Q}}
\newcommand{\hbv}{\hat{\bv}}

\newcommand{\hbPhi}{\hat{\bPhi}}
\newcommand{\hbphi}{\hat{\bphi}}

\newcommand{\hNT}{\mathcal{N}_{\hT}}
\newcommand{\hGT}{\mathcal{G}_{\hT}}
\newcommand{\hbw}{\hat{\bw}}
\newcommand{\hbW}{\hat{\bW}}

\newcommand{\rd}{\mathrm{d}}